\documentclass[11pt]{article}

\usepackage[a4paper,margin=1in]{geometry}

\usepackage{amsmath,amssymb,amsthm,mathtools,bm,mathrsfs}

\usepackage{enumitem}
\usepackage{booktabs}
\usepackage{multirow}

\usepackage{graphicx}
\usepackage{epstopdf}
\usepackage[dvipsnames]{xcolor}
\usepackage{hyperref}
\hypersetup{colorlinks=true, linkcolor=blue!50!black, citecolor=blue!50!black, urlcolor=blue!50!black}

\usepackage{algorithm}
\usepackage{algpseudocode}

\usepackage[title]{appendix}
\usepackage{todonotes}
\usepackage{manyfoot}
\usepackage{textcomp}
\usepackage{listings}

\theoremstyle{plain}
\newtheorem{theorem}{Theorem}[section]
\newtheorem{proposition}[theorem]{Proposition}
\newtheorem{lemma}[theorem]{Lemma}

\theoremstyle{definition}
\newtheorem{definition}[theorem]{Definition}

\newtheorem{remark}[theorem]{Remark}

\title{Accelerated Inertial Gradient Algorithms with Vanishing Tikhonov Regularization}

\author{
\begin{tabular}{ccc}
  Samir Adly\thanks{Email: \texttt{samir.adly@unilim.fr}} &
  Vinh Thanh Ho\thanks{Email: \texttt{vinh-thanh.ho@unilim.fr}} &
  Huu Nhan Nguyen \thanks{Email: \texttt{huu-nhan.nguyen@etu.unilim.fr}} 
\end{tabular}
\\[2mm]
\small Laboratoire XLIM, Universit\'e de Limoges,\\
\small 123 Avenue Albert Thomas, 87060 Limoges CEDEX, France
}

\date{}
\begin{document}
\maketitle

{\bf Abstract.}
In this paper, we study an explicit Tikhonov-regularized inertial gradient algorithm for smooth convex minimization with Lipschitz continuous gradient. The method is derived via an explicit time discretization of a damped inertial system with vanishing Tikhonov regularization.
Under appropriate control of the decay rate of the Tikhonov term, we establish accelerated convergence of the objective values to the minimum together with strong convergence of the iterates to the minimum-norm minimizer.
In particular, for polynomial schedules $\varepsilon_k = k^{-p}$ with $0<p<2$, we prove strong convergence to the minimum-norm solution while preserving fast objective decay.
In the critical case $p=2$, we still obtain fast rates for the objective values, while our analysis does not guarantee strong convergence to the minimum-norm minimizer.
Furthermore, we provide a thorough theoretical analysis for several choices of Tikhonov schedules.
Numerical experiments on synthetic, benchmark, and real datasets illustrate the practical performance of the proposed algorithm.

\vskip 2mm

{\bf Keywords.}
Convex optimization, accelerated methods, inertial gradient algorithms, vanishing Tikhonov regularization, minimum-norm selection, continuous-time dynamics and time discretization.
\vskip 2mm
{\bf AMS 2020 Subject Classification.}
90C25, 65K10, 49M37, 37N40.

\tableofcontents

\section{Introduction}

Let $\mathcal{H}$ be a real Hilbert space endowed with the scalar product $\langle \cdot, \cdot \rangle$ and the associated norm $\|x\|=\sqrt{\langle x,x\rangle}$.
In this paper, we consider the convex minimization problem
\begin{equation} \label{prob:ori_cvx_prob}
    {\min}\{ f(x) : x \in \mathcal{H}\},
\end{equation}
where $f \colon \mathcal{H}\to\mathbb{R}$ is continuously differentiable, convex, and has $L$-Lipschitz continuous gradient.
We assume that the set of minimizers $
\mathcal{S}:=\arg\min_{x\in\mathcal{H}} f(x)
$
is nonempty, and we denote by $x^*$ its minimum-norm element, that is, the projection of $0$ onto $\mathcal{S}$. When $\mathcal{S}$ is not a singleton, it is well known that, along the classical Tikhonov regularization path, the minimizers of $f+\frac{\varepsilon}{2}\|\cdot\|^2$ converge to $x^*$ as $\varepsilon\to 0^+$. However, preserving this selection principle in accelerated and inertial schemes, especially in the vanishing regularization regime, is far from automatic. At the continuous-time level, vanishing Tikhonov regularization can enforce convergence to $x^*$, and recent second-order models can in addition yield fast decay of the objective values. At the discrete level, transferring simultaneously acceleration and minimum-norm selection is more delicate: several schemes rely on implicit proximal steps, or introduce multiple regularization terms and intricate coefficient rules. This motivates the search for explicit, Nesterov-type discretizations with a vanishing Tikhonov term that preserve both fast value decay and strong convergence to $x^*$. In this work, we prove that a single vanishing Tikhonov term can enforce minimum-norm selection while preserving accelerated decay of the objective values, within a Nesterov-type inertial scheme.

Our aim is to analyze an accelerated first-order inertial gradient algorithm with vanishing Tikhonov regularization and to establish its strong convergence to the minimum-norm minimizer $x^*$.
Our approach is based on an explicit time discretization of the Tikhonov regularized inertial gradient system \eqref{trigs} \cite{Attouch:jde:22, Attouch:mmor:24}:
\begin{equation}\label{trigs}\tag{TRIGS}
    \ddot{x}(t) + \delta \sqrt{\varepsilon(t)}\,\dot{x}(t) + \nabla f(x(t)) + \varepsilon(t)\,x(t) = 0 .
\end{equation}
More precisely, our contributions can be summarized as follows:
\begin{itemize}
\item we derive an explicit Nesterov-type inertial gradient algorithm with a single vanishing Tikhonov regularization term;
\item we establish fast convergence rates for the objective values and strong convergence of the iterates to the minimum-norm minimizer under general parameter schedules, via a discrete Lyapunov analysis;
\item we corroborate the theory with numerical experiments on synthetic and real datasets, and we compare our method with \ref{algo:nag} and \ref{algo:nadtr}  in terms of solution quality and runtime.
\end{itemize}
From a theoretical viewpoint, this problem is challenging because it requires combining three competing mechanisms: inertia, acceleration, and a vanishing regularization term. While inertia and acceleration tend to amplify oscillations, the Tikhonov term acts as a selection bias toward $x^*$, and calibrating their interplay at the discrete level is delicate. In particular, ensuring both fast decay of the objective values and strong convergence of the iterates requires a careful Lyapunov-based analysis and a precise tuning of the parameters.

\subsection{Related works}

\subsubsection{Continuous-time dynamics}

First-order dynamical systems with a Tikhonov regularization term whose coefficient vanishes asymptotically have been extensively studied (see, e.g., \cite{Attouch:sjo:96, Attouch:jde:96, Cominetti:jde:08, Attouch:jde:10, Bot:ana:21}). In parallel, the second-order setting has also been addressed. The first studies \cite{Attouch:jde:02, Attouch:jde:17} by Attouch and Czarnecki considered the coupling of a second-order dynamical system with Tikhonov regularization,
\begin{align} \label{system:fixed-damping}
    \ddot{x}(t) + \gamma \dot{x}(t) + \nabla f(x(t)) + \varepsilon(t) x(t) = 0.
\end{align}
If the Tikhonov regularization term $\varepsilon$ satisfies the slow parametrization condition
$\int_0^{+\infty} \varepsilon(t)\,dt = +\infty$,
then the unique trajectory of \eqref{system:fixed-damping} strongly converges to the minimum-norm element of $\arg\min_{\mathcal{H}} f$. In contrast, without the Tikhonov regularization term, the convergence remains only weak. Heuristically, including the Tikhonov regularization term introduces a time-dependent strong convexity effect that diminishes over time.

To enhance the convergence rate, Attouch, Chbani, and Riahi \cite{Attouch:jmaa:18} explored the following system with asymptotically vanishing damping:
\begin{align} \label{system:avd-tikhonov}
    \ddot{x}(t) + \dfrac{\alpha}{t}\dot{x}(t) + \nabla f(x(t)) + \varepsilon(t)x(t) = 0.
\end{align}
The system \eqref{system:avd-tikhonov} can be considered as a Tikhonov regularization of the low-resolution generalized ODE modeling the Nesterov accelerated gradient (NAG) method \cite{Su:jmlr:16}:
\begin{align*}
    \ddot{x}(t) + \dfrac{\alpha}{t} \dot{x}(t) + \nabla f(x(t)) = 0.
\end{align*}
As a natural extension of \eqref{system:avd-tikhonov} incorporating Hessian-driven damping, the authors in \cite{Bot:mp:19} investigated a Tikhonov regularization of the high-resolution ODE \cite{Shi:mp:18} defined as follows:
\begin{align} \label{system:avd-hessian-tikhonov}
    \ddot{x}(t) + \dfrac{\alpha}{t}\dot{x}(t) + \beta \nabla^2 f(x(t)) \dot{x}(t) + \nabla f(x(t)) + \varepsilon(t) x(t) = 0.
\end{align}
Both second-order dynamical systems \eqref{system:avd-tikhonov} and \eqref{system:avd-hessian-tikhonov} provide fast convergence rates for the objective values. However, the available results on the convergence of the trajectories toward the minimum-norm solution remain limited, as they only guarantee that
$\liminf_{t \to +\infty} \|x(t) - x^*\| = 0.$

The works \cite{Attouch:mmor:24, Attouch:jde:22} are closely related to ours. The authors introduced new inertial dynamics designed to generate trajectories that converge rapidly to the minimizer of $f$ with minimum norm. They started from the Polyak heavy ball system with friction dynamics for strongly convex functions:
\begin{align} \label{system:polyak}
     \ddot{x}(t) + 2\sqrt{\mu}\dot{x}(t) + \nabla f(x(t)) = 0.
\end{align}
Their main idea for handling general convex functions is based on a Tikhonov regularization technique, which naturally leads to a dynamical system governed by the gradient of a strongly convex function
\begin{align*}
    \varphi_t \colon \mathcal{H} \to \mathbb{R}, \quad \varphi_t(x) := f(x) + \dfrac{\varepsilon(t)}{2}\|x\|^2.
\end{align*}
Replacing $\mu$, $\nabla f$, and the coefficient $2$ in \eqref{system:polyak} by $\varepsilon(t)$, $\nabla \varphi_t$, and a positive parameter $\delta$, respectively, amounts to the aforementioned dynamical system \eqref{trigs}.

The system \eqref{trigs} is the first second-order dynamical system that guarantees both fast convergence of the values and strong convergence of the trajectories to the minimum-norm solution. The authors in \cite{Attouch:mmor:24, Attouch:jde:22} obtained these results in the case of a general $\varepsilon(\cdot)$ and gave an in-depth analysis in the particular case $\varepsilon(t) = c/t^2$. The main results in \cite{Attouch:mmor:24, Attouch:jde:22} are extended and improved in \cite{Laszlo:jde:23} with the study of the following dynamical system:
\begin{align} \label{system:correlated-tikhonov}
    \ddot{x}(t) + \dfrac{\alpha}{t^q} \dot{x}(t) + \nabla f(x(t))  + \dfrac{a}{t^p} x(t) = 0.
\end{align}
L\'{a}szl\'{o} showed that the asymptotically vanishing damping coefficient $\frac{\alpha}{t^q}$ is correlated with the Tikhonov regularization coefficient $\frac{a}{t^p}$ \cite{Laszlo:jde:23}. He noted that his results significantly extend those in \cite{Attouch:jde:22} which only considered the case $q = \frac{p}{2}$. Moreover, his analysis indicated that the choice $p = 2q$ in the dynamical system \eqref{system:correlated-tikhonov}, which coincides with \eqref{trigs}, is not necessarily optimal.

By introducing geometric damping driven by the Hessian of the function $f$ into the system \eqref{trigs}, the work \cite{Attouch:amo:23} presented the Tikhonov regularized inertial system with Hessian-driven damping \eqref{trish} as follows:
\begin{equation} \label{trish} \tag{TRISH}
    \ddot{x}(t) + \delta\sqrt{\varepsilon(t)} \dot{x}(t) + \beta \nabla^2 f(x(t)) \dot{x}(t) + \nabla f(x(t)) + \varepsilon(t)x(t)=0.
\end{equation}
The role of the Hessian-driven damping has been thoroughly analyzed in \cite{Attouch:amo:23}. This new dynamical system (TRISH) greatly reduced the oscillations of the trajectory $x(t)$. Further recent studies on Tikhonov regularized second-order dynamical systems are actively being investigated: for example, Tikhonov regularized dynamical systems with implicit Hessian-driven damping \cite{Laszlo:coap:25}, a perturbed heavy ball system with vanishing damping \cite{Alecsa:sjo:21}, a Newton-like inertial dynamical system with a Tikhonov regularization term governed by a general maximally monotone operator \cite{Bot:jmaa:24}, the Tikhonov regularized second-order dynamical system for solving a monotone equation with single-valued, monotone and continuous operator \cite {Csetnek:arxiv:24}, the second-order dynamics combining viscous and Hessian-driven damping with a Tikhonov regularization term and the incorporation of the Moreau envelope of the nonsmooth objective function \cite{Karapetyants:coap:24}.

\subsubsection{Discrete algorithms}

Thanks to the development of second-order dynamical systems, various inertial algorithms with Tikhonov regularization have been recently developed by discretizing these systems.
One of the first Tikhonov regularized inertial algorithms is the inertial forward-backward algorithm with vanishing Tikhonov regularization \eqref{algo:ifb} \cite{Attouch:jota:18}:
\begin{align*} \label{algo:ifb} \tag{$\mathrm{(IFB)_{Tikh}}$}
    \begin{cases}
        y_k = x_k + \alpha_k(x_k - x_{k-1}), \\
        x_{k+1} = \mathrm{prox}_{s g} (y_k - s \nabla f(y_k) - s\varepsilon_k y_k),
    \end{cases}
\end{align*}
for solving the structured convex minimization problem
\begin{equation} \label{prob:composite_fg}
    \min\{f(x) + g(x) : x \in \mathcal{H}\}
\end{equation}
where $f \colon \mathcal{H} \to \mathbb{R}$ is a continuously differentiable convex function whose gradient is Lipschitz continuous and $g \colon \mathcal{H} \to \mathbb{R} \cup \{+\infty\}$ is a proper lower semi-continuous convex function.
The original idea of \ref{algo:ifb} is linked to second-order dynamical systems by performing a time discretization, implicit with respect to the nonsmooth function $g$ and explicit with respect to the smooth function $f$, of the second-order differential inclusion
\begin{align*}
    \ddot{x}(t) + \gamma(t) \dot{x}(t) + \nabla f(x(t))  + \partial g(x(t)) \ni h(t).
\end{align*}
The damping parameter $\gamma(t)$ approaches zero. The second member $h(t)$ plays the role of a perturbation term, and Algorithm \ref{algo:ifb} considers its discretization term $h_k = -\varepsilon_k y_k$ where a sequence of positive numbers $(\varepsilon_k)_k$ tends to zero. Attouch et al. \cite{Attouch:jota:18} showed that when the Tikhonov regularization coefficient $\varepsilon_k$ goes sufficiently fast to zero, Algorithm \ref{algo:ifb} achieves an accelerated convergence rate of the objective function values. On the other hand, under the slow decay of the Tikhonov regularization term, the authors obtained a partial strong convergence of the iterates $(x_k)_k$ toward the minimum-norm solution, i.e., $\liminf_{k \to +\infty} \|x_k -  x^*\| = 0$.

In \cite{Attouch:mmor:24}, the authors developed an Inertial Proximal Algorithm with Tikhonov REgularization \eqref{algo:ipatre} by applying implicit temporal discretization of \eqref{trigs} in the special case $\varepsilon_k = \frac{c}{k^2}$. The algorithm is given by
\begin{align} \tag{IPATRE} \label{algo:ipatre}
    \begin{cases}
        y_k = x_k + \left(1 - \dfrac{\alpha}{k} \right)(x_k - x_{k-1}), \\
        x_{k+1} = \mathrm{prox}_{f}\left( y_k - \dfrac{c}{k^2}x_k \right),
    \end{cases}
\end{align}
where $\alpha > 3$ and $c>0$. To the best of our knowledge, this is the first inertial gradient algorithm which ensures both the fast convergence rate of the values and a partial strong convergence to the minimum norm solution: $\liminf_{k\to +\infty} \|x_k - x^*\| = 0$. The authors extended Algorithm \ref{algo:ipatre} to the nonsmooth case with a proper lower semicontinuous and convex function $f$ by relying on the Moreau envelope $f_\lambda : \mathcal{H} \to \mathbb{R}$ (where $\lambda$ is a positive real parameter) (see more details in Section 5.3 in \cite{Attouch:mmor:24}).

L{\'a}szl{\'o} in \cite{Laszlo:cnsns:25} proposed an improved version of \ref{algo:ipatre} defined as follows:
\begin{align}
    \label{algo:piatr} \tag{PIATR}
    \begin{cases}
        y_k = x_k + \left( 1 - \dfrac{\alpha}{k^q}\right)(x_k - x_{k-1}), \\
        x_{k+1} = \mathrm{prox}_{\lambda k^{\delta}f}\left(y_k - \dfrac{c}{k^p}x_k \right),
    \end{cases}
\end{align}
where $\alpha, c, \lambda >0$, $0< q \leq 1$, $p > 1$, $\delta \in \mathbb{R}$. The weak convergence properties of the sequence generated by Algorithm~\ref{algo:piatr} and fast convergence of the objective function values were analyzed for the case $q+1 \leq p$ and $\delta \geq 0$. On the other hand, for $0 < q < 1$, $1 < p \leq q+1$, $\delta \leq 0$, further if $\delta = 0$, then $\lambda \in (0,1)$, the author provided the convergence rates for the values:
$f(x_k) - \underset{\mathcal{H}}{\min} f = \mathcal{O}\left( \frac{1}{k^{p+\delta}} \right)$ if $p < q+1$ or $\mathcal{O}\left( \frac{\ln{k}}{k^{p+\delta}} \right)$ if $p = q+1$ as $k \to +\infty$. Moreover, under the assumptions that $1 < p < q+1 < 2$, $p-q-1<\delta \leq 0$, and if $\delta = 0$, also suppose that $\lambda \in (0,1)$, he showed a partial strong convergence of the sequences generated by \ref{algo:piatr} to a minimum-norm solution: $\liminf_{k \to+\infty} \|x_k - x^*\| = 0$. For the special case $1<p<q+1<2$, $\delta = 0$ and $\lambda = 1$, he obtained both the fast convergence rates of the values (more precisely, $\mathcal{O}\left(k^{2p-4q-2}\right)$ if $(4q+2)/3 \leq p < q+1$ and $\mathcal{O}\left(k^{-p}\right)$ otherwise) and the full strong convergence, i.e., $\lim_{k \to+\infty} x_k = x^*$.

For solving the optimization problem~\eqref{prob:ori_cvx_prob}, Karapetyants and L{\'a}szl{\'o} \cite{Karapetyants:amo:24} introduced an inertial gradient algorithm with Tikhonov regularization term \eqref{algo:nadtr}, which is of the following form:
\begin{align}
    \begin{cases} \label{algo:nadtr} \tag{NADTR}
        y_k = x_k + b_{k-1}(x_k - x_{k-1}) - c_k x_k, \\
        x_{k+1} = y_k - s ( \nabla f(y_k) + \varepsilon_k y_k),
    \end{cases}
\end{align}
where the step-size $s$ satisfies $0 < s < \frac{1}{L}$.
By taking $b_{k-1} := 1 - \dfrac{\alpha}{k}$ for $k \geq 1$ and $c_k \equiv 0$, $\varepsilon_k \equiv 0$ (i.e., without Tikhonov regularization terms), Algorithm \ref{algo:nadtr} coincides with the famous Nesterov accelerated gradient \eqref{algo:nag} algorithm:
\begin{align} \label{algo:nag} \tag{NAG}
    \begin{cases}
        y_k = x_k + \left( 1 - \dfrac{\alpha}{k} \right)(x_k - x_{k-1}), \\
        x_{k+1} = y_k - s \nabla f(y_k).
    \end{cases}
\end{align}
The last two terms $c_k x_k$ and $\varepsilon_k y_k$ in \ref{algo:nadtr} serve as Tikhonov regularization terms. These terms may ensure the strong convergence of the generated sequence $(x_k)_k$ to the minimum-norm solution $x^*$ of \eqref{prob:ori_cvx_prob}, which is not valid for the algorithm~\ref{algo:nag}. After choosing appropriate coefficients $(b_k)_k$, $(c_k)_k$ and $(\varepsilon_k)_k$ to guarantee strong convergence, Algorithm \ref{algo:nadtr} is detailed as follows \cite{Karapetyants:amo:24}:
\begin{align*}
    \begin{cases}
        y_k = x_k ~ \text{if } k \in \{1, (cs)^\frac{1}{p}, (cs)^\frac{1}{p} + 1\},  \\
        y_k = x_k + \frac{k^p (a(k-1)^q - s)(a((k-1)^p -cs)^2(k-1)^q - 2s(k-1)^{2p})}{a^2(k-1)^{p+q}k^q ((k-1)^p -cs)(k^p -cs)}(x_k - x_{k-1}) \\
        \qquad - \frac{2s^2k^p ((k-1)^pk^p- c(k-1)^p - ac(k-1)^q k^p+ ac(k-1)^{q+p})}{a^2(k-1)^q k^q((k-1)^p-cs)(k^p-cs)^2}x_k,\text{otherwise,} \\
        x_{k+1} = y_k - s\nabla f(y_k) - \dfrac{cs}{k^p} y_k.
    \end{cases}
\end{align*}
Here $a$ and $c$ are positive real numbers.
This algorithm accelerates the convergence of the function values $f(x_k) - \min_{\mathcal{H}} f = \mathcal{O}(k^{-p})$ as $k \to +\infty$ and assures the strong convergence of the generated sequence $(x_k)_k$ to the minimum-norm minimizer $x^*$. It is essential to require both Tikhonov regularization terms in Algorithm~\ref{algo:nadtr} in order to ensure the strong convergence of $(x_k)_k$. Following the same idea of two Tikhonov regularization terms, L{\'a}szl{\'o} has recently proposed an inertial proximal-gradient algorithm associated with the optimization problem~\eqref{prob:composite_fg}, named TIREPROG (see \cite{Laszlo:oms:25}). Under appropriate parameters, he showed the strong convergence of the sequence $(x_k)_k$ generated by TIREPROG to the minimum-norm minimizer of $f + g$, and provided not only a fast convergence to zero of the objective function values, but also for the discrete velocity and the sub-gradient of the objective function.

Similarly to Algorithm \ref{algo:ipatre}, Attouch et al. \cite{Attouch:amo:23} proposed a first-order inertial proximal algorithm with Tikhonov regularization and Hessian-driven damping (IPATH) by using an implicit temporal discretization of the continuous dynamic \eqref{trish}. In the nonsmooth case of $f$, the authors replace $f$ by the Moreau-Yosida regularization $f_{\theta}$ and leverage the continuous differentiability of $f_{\theta}$ for which the dynamic \eqref{trish} is applied. Furthermore, \cite{Attouch:amo:23} also presented two inertial gradient algorithms with Tikhonov regularization and Hessian damping (IGATH) associated with the dynamical system \eqref{trish}, denoted as (IGATH)-1 and (IGATH)-2. Specifically, the algorithm (IGATH)-1 relies on an explicit discretization of \eqref{trish} relevant to Polyak's heavy ball in the strongly convex setting, while the other (IGATH)-2 is closely related to the method \ref{algo:nag} where a similar discretization is considered but the gradient $\nabla f$ is evaluated at an auxiliary point $y_k$ determined according to the Nesterov scheme. The algorithm in our work can be viewed as a particular instance of (IGATH)-2 corresponding to the dynamic \eqref{trigs}. However, a rigorous analysis of the convergence properties of both IGATH algorithms remains an open problem. In addition, a study of their numerical performance has not yet been explored, and such an investigation constitutes a relevant direction for our work. As an extension, the authors in \cite{Attouch:amo:23} gave an introduction and preliminary numerical illustrations of the forward-backward algorithm IPGATH (Inertial Proximal Gradient Algorithm with Tikhonov regularization and Hessian damping) for solving the composite problem \eqref{prob:composite_fg}. The theoretical study of IPGATH remains an interesting subject for further research.

\subsection{Motivation and main contributions}

Our work is motivated by the development of explicit accelerated inertial algorithms with vanishing Tikhonov regularization that ensure strong convergence to the minimum-norm minimizer.
More precisely, we consider an explicit time discretization of the dynamical system \eqref{trigs}, in the spirit of Nesterov-type schemes, and we derive the Tikhonov Regularized Inertial Gradient Algorithm \eqref{algo:triga}, presented in Section \ref{sec:algo}.

With this explicit discretization, our method differs from the inertial proximal algorithms \ref{algo:ipatre} and \ref{algo:piatr}, which rely on implicit discretizations and proximal evaluations of the objective function.
Compared to \ref{algo:nadtr}, our algorithm uses a single Tikhonov regularization term and has a simpler update structure.

Our contributions focus on a rigorous convergence analysis of \ref{algo:triga}. The main results, stated in Theorems \ref{theorem:grad-algo}, \ref{theorem:conv_rate_grad_algo}, and \ref{theorem:conv_rate_grad_algo_k^2}, mirror at the discrete level the qualitative behavior established for \eqref{trigs} in \cite{Attouch:jde:22}. Our analysis relies on a discrete Lyapunov approach and yields convergence under general assumptions on the sequence $(\varepsilon_k)_k$. We provide explicit and consistent parameter conditions ensuring these assumptions, independently of the particular choice of $(\varepsilon_k)_k$, and representative choices are summarized in Table~\ref{table:condition_necessary_parameters}.

For polynomial schedules $\varepsilon_k = k^{-p}$ with $p \in (0,2)$, we prove the value estimate
\[
f(x_k) - \min_{\mathcal{H}} f = \mathcal{O}\Big(\frac{1}{k^{p}}\Big)\quad \text{as } k\to+\infty,
\]
together with strong convergence of $(x_k)_k$ to the minimum-norm minimizer $x^*$.
Moreover, the discrete velocity satisfies
\[
\|x_k - x_{k-1}\| = \mathcal{O}\Big(\frac{1}{k^{\frac{2+p}{4}}}\Big)\quad \text{as } k\to+\infty.
\]
In the critical case $p=2$, namely $\varepsilon_k = c/k^2$, we obtain Nesterov-type rates for both values and velocities,
\[
f(x_k) - \min_{\mathcal{H}} f = \mathcal{O}\Big(\frac{1}{k^{2}}\Big),
\qquad
\|x_k - x_{k-1}\| = \mathcal{O}\Big(\frac{1}{k}\Big),
\quad \text{as } k\to+\infty,
\]
while strong convergence of $(x_k)_k$ to $x^*$ is not guaranteed by our analysis.

On the numerical side, we assess the performance of Algorithm~\ref{algo:triga} on a range of experiments, from synthetic examples to moderately large-scale real datasets with up to $32{,}000$ samples. We compare \ref{algo:triga} with \ref{algo:nag} and \ref{algo:nadtr} in terms of solution quality and runtime.

\subsection{Outline of the paper}

The rest of this paper is organized as follows. In Section~\ref{sec:algo_and_general_case}, we present Algorithm \ref{algo:triga} and analyze the asymptotic convergence of the generated iterates via a discrete Lyapunov approach under general assumptions on the Tikhonov sequence $(\varepsilon_k)_k$. Section~\ref{sec:particular_case} focuses on polynomial schedules $\varepsilon_k = k^{-p}$ with $p\in(0,2)$, where we establish fast value convergence together with strong convergence to the minimum-norm minimizer. We also discuss the critical regime $p=2$, in particular $\varepsilon_k = c/k^2$, for which fast rates are obtained while strong convergence is not guaranteed by our analysis. Section~\ref{sec:numerical} reports numerical experiments illustrating the practical behavior of the proposed method and comparing it with existing algorithms. Finally, Section~\ref{sec:conclusion} concludes the paper and outlines directions for future work.

\section{Tikhonov-regularized inertial gradient algorithm and general convergence analysis}
\label{sec:algo_and_general_case}

\subsection{Tikhonov Regularized Inertial Gradient Algorithm (TRIGA)}
\label{sec:algo}

Throughout the paper, we assume that the objective function $f$ and the sequence of Tikhonov regularization parameters $(\varepsilon_k)_{k}$ satisfy the following assumptions \eqref{assumption}:
\begin{align} \label{assumption} \tag{$\mathcal{A}$}
    \begin{cases}
        f \colon \mathcal{H} \to \mathbb{R} \text{ is convex and differentiable, and } \nabla f \text{ is } L\text{-Lipschitz continuous}, \\
        \mathcal{S} = \underset{x \in \mathcal{H}}{\arg\min} f \neq \emptyset \text{. We denote by } x^* \text{ the minimum-norm element of } \mathcal{S}, \\
        (\varepsilon_k)_{k} \text{ is a nonincreasing positive sequence and } \lim\limits_{k \to +\infty} \varepsilon_k = 0.
    \end{cases}
\end{align}

In this subsection, we derive a first-order inertial gradient algorithm by applying an explicit discretization in time to the second-order dynamical system \eqref{trigs}. For ease of reading, let us recall the system:
\begin{equation*}
    \ddot{x}(t) + \delta \sqrt{\varepsilon(t)} \dot{x}(t) + \nabla f(x(t)) + \varepsilon(t) x(t) = 0. \tag{TRIGS}
\end{equation*}
We then show that the resulting discrete-time scheme enjoys convergence properties that parallel those established for the continuous dynamics \eqref{trigs}. 

Specifically, consider the following explicit temporal discretization with a fixed step $h > 0$ and define the step-size $s := h^2$. For all $k \geq 1$, the discrete-time scheme is given by:
\begin{equation*} 
    \dfrac{x_{k+1} - 2x_k + x_{k-1}}{s} + \delta \sqrt{\varepsilon_k} \dfrac{x_k - x_{k-1}}{\sqrt{s}} + \nabla f(y_k) + \varepsilon_k y_k = 0,
\end{equation*} 
where the auxiliary sequence $(y_k)_k$ is designed in the spirit of Nesterov's accelerated scheme \cite{Nesterov:smd:83}.
This is equivalent to
\begin{equation*}
    x_{k+1} - x_k - (1 - \delta \sqrt{s\varepsilon_k}) (x_k - x_{k-1}) + s \big( \nabla f(y_k) + \varepsilon_k y_k \big) = 0.
\end{equation*}
By choosing 
\begin{equation*}
    y_k := x_k + (1-\delta\sqrt{s\varepsilon_k})(x_k - x_{k-1}),
\end{equation*}
we obtain the following Tikhonov Regularized Inertial Gradient Algorithm \eqref{algo:triga}.

\begin{algorithm}
\caption{Tikhonov Regularized Inertial Gradient Algorithm \eqref{algo:triga}}
\begin{algorithmic}[1]
\State \textbf{Initialization}: $x_0, x_1 \in \mathcal{H}$ and $k \leftarrow 1$.
\While{stopping criteria are not satisfied}
\State Compute $y_k$ and $x_{k+1}$ as follows:
\begin{equation} \label{algo:triga} \tag{TRIGA}
        \begin{cases}
            y_k =  x_k + (1 - \delta \sqrt{s\varepsilon_k})(x_k-x_{k-1}), \\
            x_{k+1} =  y_k - s \big(\nabla f(y_k) + \varepsilon_k y_k\big).
        \end{cases}
\end{equation}
\State $k \leftarrow k + 1$.
\EndWhile
\end{algorithmic}
\end{algorithm}
\subsection{Preparatory results}
For $k \geq 1$, let us define the function $\varphi_k \colon \mathcal{H} \to \mathbb{R}$ by 
\begin{align} \label{def:function_varphi_k}
    \varphi_k(x) := f(x) + \dfrac{\varepsilon_k}{2} \|x\|^2,
\end{align}
and denote
\[
x_{\varepsilon_k} := \underset{x \in \mathcal{H}}{\arg\min} ~ \varphi_k(x),
\]
which is the unique minimizer of the $\varepsilon_k$-strongly convex function $\varphi_k$. The sequence $(x_{\varepsilon_k})_k$ is called the discrete viscosity curve. We recall some useful results of Tikhonov regularization for $(x_{\varepsilon_k})_k$ in the following lemmas, which will be used in the next sections.

\begin{lemma}[\cite{Attouch:sjo:96,Attouch:amo:23}]
\label{lemma:visco-curve}
    The sequence $(x_{\varepsilon_k})_{k \geq 1}$ satisfies the following classical properties of Tikhonov regularization. \\
    \noindent
    {\rm (i)}\; $\|x_{\varepsilon_k}\| \le \|x^*\|$ for all $k\ge 1$,
    \qquad
    {\rm (ii)}\; $\lim\limits_{k\to +\infty}\|x_{\varepsilon_k}-x^*\|=0$.
\end{lemma}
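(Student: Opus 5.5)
The plan is to use only the first-order optimality condition of $\varphi_k$ and the monotonicity of $\nabla f$, and then a weak-compactness argument upgraded to strong convergence. Since $x_{\varepsilon_k}$ minimizes the differentiable, strongly convex function $\varphi_k$, it satisfies
\[
\nabla f(x_{\varepsilon_k}) + \varepsilon_k x_{\varepsilon_k} = 0 .
\]
Because $x^*\in\mathcal{S}$, we also have $\nabla f(x^*)=0$.

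For (i), the monotonicity of $\nabla f$ gives $\langle \nabla f(x_{\varepsilon_k}) - \nabla f(x^*), x_{\varepsilon_k}-x^*\rangle \ge 0$. Substituting the optimality conditions yields $-\varepsilon_k\langle x_{\varepsilon_k}, x_{\varepsilon_k}-x^*\rangle \ge 0$. Dividing by $\varepsilon_k>0$ gives $\|x_{\varepsilon_k}\|^2 \le \langle x_{\varepsilon_k},x^*\rangle \le \|x_{\varepsilon_k}\|\,\|x^*\|$, and (i) follows. The same argument with $x^*$ replaced by an arbitrary $z\in\mathcal{S}$ gives the sharper inequality
\[
\|x_{\varepsilon_k}\|^2 \le \langle x_{\varepsilon_k}, z\rangle \quad \text{for all } z\in\mathcal{S},
\]
which I will use in part (ii).

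For (ii), the sequence $(x_{\varepsilon_k})_k$ is bounded by (i). Fix any weak cluster point $\bar x$, with $x_{\varepsilon_{k_j}}\rightharpoonup \bar x$.

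\textbf{Step 1: $\bar x\in\mathcal{S}$.} By minimality of $x_{\varepsilon_k}$, we have $f(x_{\varepsilon_k}) \le \varphi_k(x_{\varepsilon_k}) \le \varphi_k(x^*) = \min f + \tfrac{\varepsilon_k}{2}\|x^*\|^2$. Since $f$ is convex and continuous, it is weakly lower semicontinuous. Letting $\varepsilon_k\to0$ therefore gives $f(\bar x)\le \min f$, so $\bar x\in\mathcal{S}$.

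\textbf{Step 2: $\bar x = x^*$.} Weak lower semicontinuity of the norm and (i) give $\|\bar x\|\le \|x^*\|$. Since $x^*$ is the unique minimum-norm element of the closed convex set $\mathcal{S}$, this forces $\bar x=x^*$. Hence the whole sequence converges weakly to $x^*$.

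\textbf{Step 3: upgrade to strong convergence.} This is the only point requiring care, since in infinite dimensions weak convergence alone does not suffice. Taking $z=x^*$ in the sharpened inequality gives $\limsup\|x_{\varepsilon_k}\|^2 \le \lim \langle x_{\varepsilon_k},x^*\rangle = \|x^*\|^2$. Combined with weak lower semicontinuity, this yields $\|x_{\varepsilon_k}\|\to\|x^*\|$. In a Hilbert space, weak convergence together with convergence of norms implies strong convergence, which gives (ii). Explicitly,
\[
\|x_{\varepsilon_k}-x^*\|^2 = \|x_{\varepsilon_k}\|^2 - 2\langle x_{\varepsilon_k},x^*\rangle + \|x^*\|^2 \to 0 .
\]

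Note that the monotonicity of $(\varepsilon_k)_k$ is not needed here; only $\varepsilon_k>0$ and $\varepsilon_k\to0$ from \eqref{assumption} are used.
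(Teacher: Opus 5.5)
Your proof is correct and complete. The paper gives no proof of this lemma; it cites it as a classical property of Tikhonov regularization from \cite{Attouch:sjo:96,Attouch:amo:23}, and your argument is essentially the standard one behind that citation: monotonicity of $\nabla f$ against the optimality condition for (i), then boundedness, identification of weak cluster points via weak lower semicontinuity and uniqueness of the minimum-norm element, and norm convergence for (ii).

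Two simplifications are available. First, (i) also follows without gradients, from the inequality $\varphi_k(x_{\varepsilon_k})\le\varphi_k(x^*)$ you already use in Step~1 combined with $f(x_{\varepsilon_k})\ge\min_{\mathcal{H}} f$. Second, in Step~3 the bound $\limsup_k\|x_{\varepsilon_k}\|\le\|x^*\|$ is immediate from (i), so you do not need the sharpened inequality over all $z\in\mathcal{S}$.
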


\begin{lemma}[Lemma 4, \cite{Attouch:amo:23}]
\label{lemma:difference-tikhonov-value}
    For all $k \geq 1$, the following relations are satisfied:
    \def\labelenumi{\rm (\roman{enumi})}
    \def\theenumi{\roman{enumi}}
    \begin{enumerate}        
        \item $\varphi_k(x_{\varepsilon_k}) - \varphi_{k+1}(x_{\varepsilon_{k+1}}) \leq \dfrac{\varepsilon_k - \varepsilon_{k+1}}{2} \|x_{\varepsilon_{k+1}}\|^2$,
        \item $\| x_{\varepsilon_{k+1}} - x_{\varepsilon_k}\|^2 \leq \dfrac{\varepsilon_k - \varepsilon_{k+1}}{\varepsilon_{k}} \langle x_{\varepsilon_{k+1}}, x_{\varepsilon_{k+1}} - x_{\varepsilon_k} \rangle$ and therefore
        \begin{align*}
            \| x_{\varepsilon_{k+1}} - x_{\varepsilon_k}\| \leq \dfrac{\varepsilon_k - \varepsilon_{k+1}}{\varepsilon_{k}} \|x_{\varepsilon_{k+1}}\|.
        \end{align*}
    \end{enumerate}
\end{lemma}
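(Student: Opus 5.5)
The plan is to derive both parts of Lemma~\ref{lemma:difference-tikhonov-value} from the first-order optimality conditions of the strongly convex functions $\varphi_k$ and $\varphi_{k+1}$, namely
\[
\nabla f(x_{\varepsilon_k}) + \varepsilon_k x_{\varepsilon_k} = 0,
\qquad
\nabla f(x_{\varepsilon_{k+1}}) + \varepsilon_{k+1} x_{\varepsilon_{k+1}} = 0,
\]
together with the monotonicity of $\varepsilon_k$ from \eqref{assumption}.

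For (i), I will use only the minimality of $x_{\varepsilon_k}$ for $\varphi_k$. Comparing with the point $x_{\varepsilon_{k+1}}$ gives $\varphi_k(x_{\varepsilon_k}) \le \varphi_k(x_{\varepsilon_{k+1}})$. Then I write
\[
\varphi_k(x_{\varepsilon_{k+1}}) = f(x_{\varepsilon_{k+1}}) + \frac{\varepsilon_{k+1}}{2}\|x_{\varepsilon_{k+1}}\|^2 + \frac{\varepsilon_k-\varepsilon_{k+1}}{2}\|x_{\varepsilon_{k+1}}\|^2
= \varphi_{k+1}(x_{\varepsilon_{k+1}}) + \frac{\varepsilon_k-\varepsilon_{k+1}}{2}\|x_{\varepsilon_{k+1}}\|^2 ,
\]
and subtracting $\varphi_{k+1}(x_{\varepsilon_{k+1}})$ gives (i) immediately.

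For (ii), I subtract the two optimality conditions and take the inner product with $x_{\varepsilon_{k+1}}-x_{\varepsilon_k}$. Monotonicity of $\nabla f$ gives $\langle \nabla f(x_{\varepsilon_{k+1}})-\nabla f(x_{\varepsilon_k}), x_{\varepsilon_{k+1}}-x_{\varepsilon_k}\rangle \ge 0$. Hence
\[
\langle \varepsilon_{k+1}x_{\varepsilon_{k+1}} - \varepsilon_k x_{\varepsilon_k}, x_{\varepsilon_{k+1}}-x_{\varepsilon_k}\rangle \le 0 .
\]
The key algebraic step is to write
\[
\varepsilon_{k+1}x_{\varepsilon_{k+1}} - \varepsilon_k x_{\varepsilon_k} = \varepsilon_k(x_{\varepsilon_{k+1}}-x_{\varepsilon_k}) - (\varepsilon_k-\varepsilon_{k+1})x_{\varepsilon_{k+1}} .
\]
Substituting this gives
\[
\varepsilon_k\|x_{\varepsilon_{k+1}}-x_{\varepsilon_k}\|^2 \le (\varepsilon_k-\varepsilon_{k+1})\langle x_{\varepsilon_{k+1}}, x_{\varepsilon_{k+1}}-x_{\varepsilon_k}\rangle .
\]
Dividing by $\varepsilon_k>0$ yields the first inequality in (ii).

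For the norm bound, I apply Cauchy--Schwarz to the right-hand side, using $\varepsilon_k-\varepsilon_{k+1}\ge 0$ so the sign is preserved. This gives $\|x_{\varepsilon_{k+1}}-x_{\varepsilon_k}\|^2 \le \frac{\varepsilon_k-\varepsilon_{k+1}}{\varepsilon_k}\|x_{\varepsilon_{k+1}}\|\,\|x_{\varepsilon_{k+1}}-x_{\varepsilon_k}\|$. If $x_{\varepsilon_{k+1}}=x_{\varepsilon_k}$ the claim is trivial; otherwise I divide by $\|x_{\varepsilon_{k+1}}-x_{\varepsilon_k}\|$.

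I do not expect a real obstacle. The only points needing care are choosing the split so that the factor $\varepsilon_k$ (not $\varepsilon_{k+1}$) multiplies the squared norm, and using the nonincreasing property of $(\varepsilon_k)_k$ to keep the sign in the Cauchy--Schwarz step.
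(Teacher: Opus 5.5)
Your proof is correct: (i) follows from $\varphi_k(x_{\varepsilon_k})\le\varphi_k(x_{\varepsilon_{k+1}})$ together with your splitting identity. Part (ii) follows from monotonicity of $\nabla f$ applied to the optimality conditions $\nabla f(x_{\varepsilon_j})+\varepsilon_j x_{\varepsilon_j}=0$ for $j=k,k+1$, and then Cauchy--Schwarz with the nonnegative factor $\varepsilon_k-\varepsilon_{k+1}$. The paper gives no proof of its own and simply imports the result from Lemma~4 of the cited Attouch et al.\ reference; your argument is the standard one for these viscosity-curve estimates, so nothing is missing.
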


\subsection{Convergence analysis for the general sequence \texorpdfstring{$(\varepsilon_k)_k$}{TEXT}}

To proceed with the Lyapunov analysis, we first introduce the energy sequence $(E_k)_k$ defined by
\begin{equation} \label{def:energy-function-grad-algo}
    E_k := E_{\mathrm{pot},k} + E_{\mathrm{mix},k} + E_{\mathrm{kin},k},
\end{equation}
where $E_{\mathrm{pot},k}$, $E_{\mathrm{mix},k}$, and $E_{\mathrm{kin},k}$ are respectively the potential, mixed, and kinetic components,
\begin{align*}
    & E_{\mathrm{pot},k} = \alpha_k \big(\varphi_k(x_k) - \varphi_k(x_{\varepsilon_k})\big), \; E_{\mathrm{mix},k} = \dfrac{1}{2}\big\| \tau_k (x_k - x_{\varepsilon_{k-1}}) + (x_k - x_{k-1})\big\|^2, \\
    & E_{\mathrm{kin},k} = \dfrac{\beta_k}{2}\|x_k - x_{k-1}\|^2.
\end{align*}
Here the positive parameters $\alpha_k$, $\beta_k$, and $\tau_k$ will be specified later in the proof of Theorem~\ref{theorem:grad-algo}.

The next lemma provides elementary estimates relating the function values and the iterates to the energy level $E_k$.

\begin{lemma} \label{lemma:convergence_rate_grad_algo}
    Let $(x_k)_k$ denote the sequence generated by Algorithm~\ref{algo:triga}, and let $(E_k)_k$ be the associated energy sequence defined in \eqref{def:energy-function-grad-algo}. Then, for all $k \geq 1$, the following inequalities hold:
   \\
    \noindent
    {\rm (i)}\; $f(x_k) - \min_{\mathcal{H}} f \le \dfrac{E_k}{\alpha_k} + \dfrac{\varepsilon_k}{2}\|x^*\|^2,$
    \qquad
    {\rm (ii)}\; $\|x_k - x_{\varepsilon_k}\|^2 \le \dfrac{2E_k}{\alpha_k \varepsilon_k},$
    \\
    {\rm (iii)}\; $\|x_k - x_{k-1}\|^2 \le \dfrac{2E_k}{\beta_k}.$

\noindent In particular, the sequence $(x_k)_k$ converges strongly to $x^*$ provided that
    \[
    \lim_{k\to+\infty}\frac{E_k}{\alpha_k\varepsilon_k}=0.
    \]
\end{lemma}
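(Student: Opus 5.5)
The three estimates follow directly from the nonnegativity of the three components of $E_k$, combined with strong convexity of $\varphi_k$ and Lemma~\ref{lemma:visco-curve}. First, each component is nonnegative. $E_{\mathrm{mix},k}$ and $E_{\mathrm{kin},k}$ are squared norms with positive weights, and $E_{\mathrm{pot},k}\ge 0$ because $x_{\varepsilon_k}$ minimizes $\varphi_k$ and $\alpha_k>0$. Hence $E_{\mathrm{pot},k}\le E_k$ and $E_{\mathrm{kin},k}\le E_k$. The second inequality gives (iii) immediately: $\frac{\beta_k}{2}\|x_k-x_{k-1}\|^2\le E_k$.

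For (i), I will write $f(x_k)-\min_{\mathcal H} f = f(x_k)-f(x^*)$ and compare through $\varphi_k$:
\[
f(x_k)-f(x^*) \le \varphi_k(x_k) - \varphi_k(x^*) + \frac{\varepsilon_k}{2}\|x^*\|^2 .
\]
This holds because $f(x_k)\le\varphi_k(x_k)$ and $\varphi_k(x^*)=f(x^*)+\frac{\varepsilon_k}{2}\|x^*\|^2$. Since $\varphi_k(x^*)\ge \varphi_k(x_{\varepsilon_k})$, we get $\varphi_k(x_k)-\varphi_k(x^*)\le \varphi_k(x_k)-\varphi_k(x_{\varepsilon_k}) = E_{\mathrm{pot},k}/\alpha_k\le E_k/\alpha_k$, which gives (i). Lemma~\ref{lemma:visco-curve}(i) is not needed here, because the comparison point is $x^*$ itself.

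For (ii), I will use the $\varepsilon_k$-strong convexity of $\varphi_k$ at its minimizer $x_{\varepsilon_k}$, where $\nabla\varphi_k(x_{\varepsilon_k})=0$. This yields
\[
\varphi_k(x_k)-\varphi_k(x_{\varepsilon_k})\ge \frac{\varepsilon_k}{2}\|x_k-x_{\varepsilon_k}\|^2 .
\]
Multiplying by $\alpha_k$ and bounding by $E_k$ gives $\|x_k-x_{\varepsilon_k}\|^2\le 2E_k/(\alpha_k\varepsilon_k)$.

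For the final claim, I will use the triangle inequality $\|x_k-x^*\|\le\|x_k-x_{\varepsilon_k}\|+\|x_{\varepsilon_k}-x^*\|$. The first term tends to $0$ by (ii) under the hypothesis $E_k/(\alpha_k\varepsilon_k)\to 0$, and the second tends to $0$ by Lemma~\ref{lemma:visco-curve}(ii). There is no real obstacle in this proof. The only care needed is in (i): compare with $x^*$ rather than with $x_{\varepsilon_k}$, so that the residual term is exactly $\frac{\varepsilon_k}{2}\|x^*\|^2$.
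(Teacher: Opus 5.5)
Your proposal is correct and follows essentially the same route as the paper. You bound $f(x_k)-\min_{\mathcal{H}} f$ by $\varphi_k(x_k)-\varphi_k(x_{\varepsilon_k})+\frac{\varepsilon_k}{2}\|x^*\|^2$, use the $\varepsilon_k$-strong convexity of $\varphi_k$ at its minimizer for (ii), and use nonnegativity of the energy components for each bound; the final strong-convergence claim, via the triangle inequality and Lemma~\ref{lemma:visco-curve}(ii), is exactly how the paper argues it in the proof of Theorem~\ref{theorem:conv_rate_grad_algo}.
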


\begin{proof}
    The proof follows the same lines as \cite[Lemma 3]{Attouch:amo:23}. Using the definition of $\varphi_k$ in \eqref{def:function_varphi_k}, we obtain
    \[
        f(x_k) - \underset{\mathcal{H}}{\min} f
        \leq \varphi_k(x_k) - \varphi_k(x_{\varepsilon_k}) + \dfrac{\varepsilon_k}{2}\|x^*\|^2.
    \]
    By definition of $E_{\mathrm{pot},k}$, we have
    \[
    \varphi_k(x_k) - \varphi_k(x_{\varepsilon_k})
    =\frac{E_{\mathrm{pot},k}}{\alpha_k}
    \leq \frac{E_k}{\alpha_k},
    \]
    since $E_k-E_{\mathrm{pot},k}=E_{\mathrm{mix},k}+E_{\mathrm{kin},k}\ge 0$.
    This proves (i).

    By the $\varepsilon_k$-strong convexity of $\varphi_k$, we also have
    \[
    \varphi_k(x_k) - \varphi_k(x_{\varepsilon_k}) \geq \dfrac{\varepsilon_k}{2}\|x_k-x_{\varepsilon_k}\|^2.
    \]
    Combining this with
    $\varphi_k(x_k) - \varphi_k(x_{\varepsilon_k})=\dfrac{E_{\mathrm{pot},k}}{\alpha_k}\le \dfrac{E_k}{\alpha_k}$ yields (ii).

    Finally, by definition of $E_{\mathrm{kin},k}$,
    \[
    \|x_k-x_{k-1}\|^2=\frac{2E_{\mathrm{kin},k}}{\beta_k}\le \frac{2E_k}{\beta_k},
    \]
   which proves (iii).
\end{proof}

We now impose quantitative decay conditions on the Tikhonov parameters $(\varepsilon_k)_k$, tailored to the Lyapunov analysis.
\begin{definition}
    Given $\delta>0$, we say that the sequence $(\varepsilon_k)_k$ satisfies the conditions $(\mathcal{K}_0)$ if there exist $k_0 \in \mathbb{N}$ and positive parameters $a, b, q, \lambda$ such that $\lambda < \delta$ and, for all $k \geq k_0$,
    \begin{align} \label{condition:tikhonov-sequence}\tag{$\mathcal{K}_0$}
        \begin{cases}
            (i) & (1 + q) \left(\dfrac{1}{\sqrt{s\varepsilon_{k+1}}} -\dfrac{1}{\sqrt{s\varepsilon_{k}}}  + \dfrac{\delta}{1 - \delta \sqrt{s \varepsilon_k}} - \lambda \right) - \lambda\sqrt{\dfrac{\varepsilon_{k+1}}{\varepsilon_{k}}} \leq 0,\\
            (ii) &  \dfrac{1-b}{b}\lambda^2  + \dfrac{\delta \lambda}{1 - \delta \sqrt{s \varepsilon_k}} - 1 \leq 0, \\
            (iii) &   \dfrac{(1+a(1-q))\lambda}{a} -\delta   + (1+q)\left(\dfrac{1}{\sqrt{s\varepsilon_{k+1}}} -\dfrac{1}{\sqrt{s\varepsilon_{k}}} \right) +\dfrac{q\delta^2 \sqrt{s \varepsilon_{k+1}}}{1 - \delta \sqrt{s \varepsilon_k}} \leq 0, \\
            (iv) & Ls\left(1+\lambda\sqrt{s\varepsilon_{k+1}}\right) + q((L + \varepsilon_k)s-1) \leq 0.  \\
        \end{cases}
    \end{align}
\end{definition}

For readability, introduce the forward differences (for $k\ge 0$)
\[
\dot{x}_k := x_{k+1}-x_k,\qquad \dot{x}_{\varepsilon_k}:=x_{\varepsilon_{k+1}}-x_{\varepsilon_k},\qquad \dot{\tau}_k:=\tau_{k+1}-\tau_k.
\]
Then the update rules of Algorithm~\ref{algo:triga} can be rewritten in the phase-space form: for $k\ge 1$,
\begin{equation} \label{phase-space}
    \begin{cases}
        \dot{x}_k - \dot{x}_{k-1} = -\delta \sqrt{s\varepsilon_k} \dot{x}_{k-1} - s\nabla \varphi_k(y_k),\\
        y_k - x_k = (1-\delta\sqrt{s\varepsilon_k}) \dot{x}_{k-1} = \dot{x}_k + s\nabla \varphi_k(y_k).
    \end{cases}
\end{equation}

\begin{theorem} \label{theorem:grad-algo}
    Suppose $f \colon \mathcal{H} \to \mathbb{R}$ is convex and differentiable, with $L$-Lipschitz continuous gradient $\nabla f$. Let $(x_k)_k$ be generated by Algorithm~\ref{algo:triga} and assume that $(\varepsilon_k)_k$ satisfies \eqref{condition:tikhonov-sequence} for all $k \geq k_0$. Then, for all $k \geq k_0$,
    \begin{align} \label{theorem:grad-algo_main_inequa}
        E_{k+1} - E_k + \mu_{k+1} E_{k+1} \leq \dfrac{\theta_k}{2} \| x^* \|^2, 
    \end{align}
    where $(\mu_k)_k$ and $(\theta_k)_k$ are given by
    \begin{align*}
        \mu_{k+1} & = \dfrac{\sqrt{\varepsilon_k} - \sqrt{\varepsilon_{k+1}}}{\sqrt{\varepsilon_{k+1}}} + \left( \dfrac{\delta}{1 - \delta\sqrt{s \varepsilon_k}} - \lambda \right)\sqrt{s \varepsilon_k},\\
        \theta_k & = (a+b)\lambda \sqrt{s\varepsilon_k} \dfrac{(\dot{\varepsilon}_{k-1})^2}{\varepsilon_{k-1}^2} - \dfrac{(1+q)s \dot{\varepsilon}_k}{(1 - \delta \sqrt{s \varepsilon_k})(1 -s\varepsilon_k)} \left(1 + \mu_{k+1}\right). 
    \end{align*}
    Define $\displaystyle \gamma_k = \prod_{j=k_0}^k (1 + \mu_j)$ for $k \geq k_0$. Then
    \begin{align*}
        E_{k+1} \leq \dfrac{\gamma_{k_0}}{\gamma_{k+1}}E_{k_0} + \dfrac{1}{2\gamma_{k+1}} \left( \sum_{j = k_0}^k \gamma_{j} \theta_j \right) \|x^*\|^2.
    \end{align*}
\end{theorem}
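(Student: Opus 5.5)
The plan is a discrete Lyapunov computation. I first fix the free parameters so that the phase-space form \eqref{phase-space} closes the estimates:
\[
\tau_k := \lambda\sqrt{s\varepsilon_k}, \qquad \alpha_k := (1+q)\,s \ \text{(suitably normalized by } \sqrt{s}\text{)}, \qquad \beta_k := \text{a multiple of } \tau_k \ \text{governed by } b .
\]
The constraint $\lambda<\delta$ guarantees that the friction $\delta\sqrt{s\varepsilon_k}$ dominates the coefficient $\tau_k$ used in the mixed term, as in the continuous TRIGS analysis. I then bound the three components of $E_{k+1}-E_k$ separately.

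\textbf{Potential term.} I split
\[
\varphi_{k+1}(x_{k+1})-\varphi_{k+1}(x_{\varepsilon_{k+1}}) - \bigl(\varphi_k(x_k)-\varphi_k(x_{\varepsilon_k})\bigr).
\]
\begin{itemize}
\item Changing $\varphi_{k+1}$ into $\varphi_k$ at $x_{k+1}$ costs $\tfrac{\dot\varepsilon_k}{2}\|x_{k+1}\|^2\le 0$.
\item Changing the minimum value costs at most $\tfrac{\varepsilon_k-\varepsilon_{k+1}}{2}\|x^*\|^2$, by Lemma~\ref{lemma:difference-tikhonov-value}(i) combined with Lemma~\ref{lemma:visco-curve}(i).
\end{itemize}
For $\varphi_k(x_{k+1})-\varphi_k(x_k)$ I use the descent lemma for the $(L+\varepsilon_k)$-smooth function $\varphi_k$ at $y_k$, combined with convexity at $y_k$ tested against $x_k$ and against $x_{\varepsilon_k}$. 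This is the standard Nesterov-type inequality: for $z\in\{x_k,x_{\varepsilon_k}\}$,
\[
\varphi_k(x_{k+1})\le \varphi_k(z)+\langle\nabla\varphi_k(y_k),y_k-z\rangle-\tfrac{\varepsilon_k}{2}\|y_k-z\|^2-\tfrac{s}{2}\bigl(2-(L+\varepsilon_k)s\bigr)\|\nabla\varphi_k(y_k)\|^2 .
\]
I take a convex combination of the two instances, with weights dictated by $\tau$. Condition (iv) is exactly what makes the remaining $\|\nabla\varphi_k(y_k)\|^2$ coefficient nonpositive, with $q$ acting as the combination weight.

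\textbf{Mixed term.} Write $v_k=\tau_k(x_k-x_{\varepsilon_{k-1}})+\dot x_{k-1}$. From \eqref{phase-space},
\[
v_{k+1}-v_k=\dot\tau_k(x_{k+1}-x_{\varepsilon_k})+\tau_k\bigl(\dot x_k-\dot x_{\varepsilon_{k-1}}\bigr)-\delta\sqrt{s\varepsilon_k}\,\dot x_{k-1}-s\nabla\varphi_k(y_k).
\]
I expand $\tfrac12\|v_{k+1}\|^2-\tfrac12\|v_k\|^2=\langle v_{k+1}-v_k,v_{k+1}\rangle-\tfrac12\|v_{k+1}-v_k\|^2$. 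The term $-s\tau_k\langle\nabla\varphi_k(y_k),x_k-x_{\varepsilon_k}\rangle$ is paired against the potential-term convexity gain. Every occurrence of $x_{\varepsilon_k}-x_{\varepsilon_{k-1}}$ is handled by Young's inequality with weights $a$ and $b$, followed by Lemma~\ref{lemma:difference-tikhonov-value}(ii). This is exactly what produces the $(a+b)\lambda\sqrt{s\varepsilon_k}\,(\dot\varepsilon_{k-1}/\varepsilon_{k-1})^2\|x^*\|^2$ part of $\theta_k$.

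\textbf{Kinetic term.} I expand it using $\dot x_k=(1-\delta\sqrt{s\varepsilon_k})\dot x_{k-1}-s\nabla\varphi_k(y_k)$.

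\textbf{Collecting and absorbing $\mu_{k+1}E_{k+1}$.} I add $\mu_{k+1}E_{k+1}$, expanding $E_{k+1}$ through the same variables. I then group all terms as quadratic forms in four quantities:
\[
\dot x_k,\qquad x_{k+1}-x_{\varepsilon_k},\qquad \nabla\varphi_k(y_k),\qquad \varphi_k(x_{k+1})-\varphi_k(x_{\varepsilon_k}).
\]
Conditions (i)--(iii) of \eqref{condition:tikhonov-sequence} are designed so that the coefficients come out nonpositive:
\begin{itemize}
\item (i) controls the potential gap coefficient;
\item (ii) controls the cross term via Young with $b$;
\item (iii) controls the $\|\dot x_k\|^2$ coefficient.
\end{itemize}
The precise form of $\mu_{k+1}$, namely $\sqrt{\varepsilon_k/\varepsilon_{k+1}}-1$ plus $(\delta/(1-\delta\sqrt{s\varepsilon_k})-\lambda)\sqrt{s\varepsilon_k}$, should emerge from the ratio $\tau_k/\tau_{k+1}$ and the friction surplus over $\tau$. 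Re-expressing the $\dot\varepsilon_k$ term at index $k+1$ produces the factors $(1-\delta\sqrt{s\varepsilon_k})^{-1}(1-s\varepsilon_k)^{-1}(1+\mu_{k+1})$ appearing in $\theta_k$. I expect this coefficient bookkeeping to be the main obstacle. In particular, matching the shifted indices ($\varepsilon_{k-1}$ in $E_{\mathrm{mix},k}$ versus $\varepsilon_k$ in $E_{\mathrm{pot},k}$) requires relating $y_k$, $x_{k+1}$ and $x_k$ precisely through \eqref{phase-space}. I would first verify the computation in the case $\varepsilon_k$ constant, where $\theta_k=0$, to check the sign structure.

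\textbf{Final estimate.} Inequality \eqref{theorem:grad-algo_main_inequa} gives
\[
(1+\mu_{k+1})E_{k+1}\le E_k+\tfrac{\theta_k}{2}\|x^*\|^2 .
\]
I multiply by $\gamma_k$ and use $\gamma_{k+1}=\gamma_k(1+\mu_{k+1})$, which yields
\[
\gamma_{k+1}E_{k+1}-\gamma_kE_k\le \tfrac{\gamma_k\theta_k}{2}\|x^*\|^2 .
\]
Telescoping from $k_0$ to $k$ and dividing by $\gamma_{k+1}$ gives the stated bound on $E_{k+1}$.
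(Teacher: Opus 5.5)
Your architecture matches the paper: the three-part energy, the phase-space form \eqref{phase-space}, the descent inequality for $\varphi_k$ at $y_k$ tested against $x_k$ and $x_{\varepsilon_k}$, Young with weights $a,b$ on the $\dot x_{\varepsilon_{k-1}}$ terms, and the telescoping with $\gamma_{k+1}=\gamma_k(1+\mu_{k+1})$. The telescoping step is correct. The gap is in the part you defer as ``bookkeeping'', which is the actual content of the theorem.

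Conditions (i)--(iv) of \eqref{condition:tikhonov-sequence} are sign conditions on the four \emph{diagonal} coefficients only. In order, these are the coefficients of $\varphi_k(x_{k+1})-\varphi_k(x_{\varepsilon_k})$, $\|x_{k+1}-x_{\varepsilon_k}\|^2$, $\|\dot x_k\|^2$ and $\|\nabla\varphi_k(y_k)\|^2$. They close the argument only because the two sign-indefinite cross terms are annihilated exactly:
\begin{itemize}
\item The coefficient of $\langle x_{k+1}-x_{\varepsilon_k},\dot x_k\rangle$ is $\tau_{k+1}\tau_k+\dot\tau_k-\frac{\tau_{k+1}\delta\sqrt{s\varepsilon_k}}{1-\delta\sqrt{s\varepsilon_k}}+\mu_{k+1}\tau_{k+1}$. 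Setting it to zero with $\tau_k=\lambda\sqrt{s\varepsilon_k}$ is what \emph{defines} the stated $\mu_{k+1}$; this term is not handled by Young's inequality.
\item The coefficient of $\langle\nabla\varphi_k(y_k),\dot x_k\rangle$ is $\alpha_k(1-s\varepsilon_k)-\frac{s}{1-\delta\sqrt{s\varepsilon_k}}-\frac{\beta_k s}{(1-\delta\sqrt{s\varepsilon_k})^2}$. The paper kills it by taking $\beta_k=q(1-\delta\sqrt{s\varepsilon_k})$, hence $\alpha_k=\frac{(1+q)s}{(1-\delta\sqrt{s\varepsilon_k})(1-s\varepsilon_k)}$.
\end{itemize}

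Your parameter choices break this. Take $\alpha_k=(1+q)s$ and $\beta_k$ a multiple of $\tau_k$, so $\beta_k\to0$. Then the coefficient of $\langle\nabla\varphi_k(y_k),\dot x_k\rangle$ tends to $qs\neq0$, and Young's inequality cannot absorb it. The negative part of the $\|\dot x_k\|^2$ coefficient is only $O(\sqrt{\varepsilon_k})$, coming from the friction. The $\|\nabla\varphi_k(y_k)\|^2$ coefficient is $O(s^2)$. So the Young remainder on $\|\nabla\varphi_k(y_k)\|^2$ would be of order $\varepsilon_k^{-1/2}$.

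The kinetic weight must therefore be of order one. Its contribution $-\frac{\beta_k s^2}{2(1-\delta\sqrt{s\varepsilon_k})^2}\|\nabla\varphi_k(y_k)\|^2$ is what absorbs the term $\frac{Ls^2\alpha_k}{2}\|\nabla\varphi_k(y_k)\|^2$ produced by the descent lemma. The resulting coefficient,
\[
\frac{s^2}{2(1-\delta\sqrt{s\varepsilon_k})(1-s\varepsilon_k)}\bigl[Ls(1+\lambda\sqrt{s\varepsilon_{k+1}})+q((L+\varepsilon_k)s-1)\bigr],
\]
is exactly condition (iv). So $q$ is the kinetic weight, not a combination weight between the two descent inequalities, and $b$ enters only through Young.

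Likewise, the factor $\frac{(1+q)s}{(1-\delta\sqrt{s\varepsilon_k})(1-s\varepsilon_k)}(1+\mu_{k+1})$ in $\theta_k$ is simply $\alpha_k(1+\mu_{k+1})$, not the result of an index shift. It comes from using $\varphi_k(x_{\varepsilon_k})-\varphi_{k+1}(x_{\varepsilon_{k+1}})\le-\frac{\dot\varepsilon_k}{2}\|x^*\|^2$ twice, with $\alpha_{k+1}\le\alpha_k$: once in $E_{\mathrm{pot},k+1}-E_{\mathrm{pot},k}$ and once in $\mu_{k+1}E_{\mathrm{pot},k+1}$.

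Any rebalancing that keeps $\beta_k\to0$, for instance by exploiting the term $-\frac12\|v_{k+1}-v_k\|^2$ that you retain, would at best produce a different energy and a different $\theta_k$. It would also require conditions other than \eqref{condition:tikhonov-sequence}, so it would not prove the stated inequality.
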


\begin{proof}
First, we estimate the mixed energy component in the left member of \eqref{theorem:grad-algo_main_inequa}:
\begin{align*}
    E_{\mathrm{mix},k+1} - E_{\mathrm{mix},k} + \mu_{k+1} E_{\mathrm{mix},k+1}.
\end{align*}
Let us denote $p_k := \tau_k (x_k - x_{\varepsilon_{k-1}}) + (x_k - x_{k-1})$. From the equalities \eqref{phase-space}, we obtain
\begin{align*}
    p_{k+1} - p_k &= (\tau_{k+1} ( x_{k+1} - x_{\varepsilon_k}) + \dot{x}_k) - (\tau_k ( x_k - x_{\varepsilon_{k-1}}) + \dot{x}_{k-1}) \\
    & = \dot{\tau}_k(x_{k+1} - x_{\varepsilon_k}) + \tau_k ( \dot{x}_{k} - \dot{x}_{\varepsilon_{k-1}}) + (\dot{x}_k - \dot{x}_{k-1}) \\
    & = \dot{\tau}_k(x_{k+1} - x_{\varepsilon_k}) + \tau_k \dot{x}_k - \delta\sqrt{s\varepsilon_k} \dot{x}_{k-1} - \tau_k \dot{x}_{\varepsilon_{k-1}} - s\nabla \varphi_k(y_k).
\end{align*}
Using the inequality $\dfrac{1}{2}\|x\|^2 - \dfrac{1}{2}\|y\|^2 = \langle x - y, x \rangle - \dfrac{1}{2}\|x - y\|^2 \leq \langle x - y, x \rangle$, we have
\begin{align*}
    & E_{\mathrm{mix},k+1} - E_{\mathrm{mix},k} = \dfrac{1}{2}\|p_{k+1}\|^2 - \dfrac{1}{2} \|p_k\|^2 \leq \langle p_{k+1} - p_k, p_{k+1} \rangle \\
    & = \langle \dot{\tau}_k(x_{k+1} - x_{\varepsilon_k}) + \tau_k \dot{x}_k - \delta\sqrt{s\varepsilon_k} \dot{x}_{k-1} - \tau_k \dot{x}_{\varepsilon_{k-1}} - s\nabla \varphi_k(y_k), \tau_{k+1}(x_{k+1} - x_{\varepsilon_k}) + \dot{x}_k \rangle \\
    & = \tau_{k+1} \dot{\tau}_k \|x_{k+1} - x_{\varepsilon_k}\|^2 + (\dot{\tau}_k + \tau_{k+1}\tau_k) \langle x_{k+1} - x_{\varepsilon_k}, \dot{x}_k \rangle + \tau_{k} \|\dot{x}_k\|^2  \\
    & ~ ~ -\tau_{k+1} \delta\sqrt{s \varepsilon_k} \langle x_{k+1} - x_{\varepsilon_k}, \dot{x}_{k-1} \rangle - \delta \sqrt{s\varepsilon_k} \langle \dot{x}_k, \dot{x}_{k-1} \rangle - \tau_{k+1} \tau_k \langle x_{k+1} - x_{\varepsilon_k}, \dot{x}_{\varepsilon_{k-1}} \rangle  \\ 
    & ~ ~ - \tau_k \langle \dot{x}_k, \dot{x}_{\varepsilon_{k-1}} \rangle - s\tau_{k+1} \langle \nabla \varphi_k(y_k), x_{k+1} - x_{\varepsilon_k} \rangle - s\langle \nabla \varphi_k(y_k), \dot{x}_k \rangle.
\end{align*}
We evaluate now the term $\langle x_{k+1} - x_{\varepsilon_k}, \dot{x}_{k-1} \rangle$. By the first equality in \eqref{phase-space}, we have
\begin{align*}
    \langle x_{k+1} - x_{\varepsilon_k}, \dot{x}_{k-1} \rangle & = \left \langle x_{k+1} - x_{\varepsilon_k}, \dfrac{1}{1 -\delta \sqrt{s\varepsilon_k}}(\dot{x}_k + s\nabla \varphi_k(y_k)) \right\rangle \\
    & = \dfrac{1}{1- \delta \sqrt{s \varepsilon_k}} \langle x_{k+1} - x_{\varepsilon_k}, \dot{x}_k \rangle + \dfrac{s}{1 - \delta\sqrt{s\varepsilon_k}} \langle \nabla \varphi_k (y_k), x_{k+1} - x_{\varepsilon_k} \rangle.
\end{align*}
Similarly, the term $\langle \dot{x}_k , \dot{x}_{k-1} \rangle$ is calculated as follows:
\begin{align*}
    \langle \dot{x}_k , \dot{x}_{k-1} \rangle = \dfrac{1}{1 -\delta \sqrt{s\varepsilon_k}} \|\dot{x}_k\|^2 + \dfrac{s}{1 - \delta \sqrt{s\varepsilon_k}} \langle \nabla \varphi_k(y_k), \dot{x}_k \rangle.
\end{align*}
Then, we obtain
\begin{align}
     & E_{\mathrm{mix},k+1} - E_{\mathrm{mix},k}  \leq  \tau_{k+1} \dot{\tau}_k\|x_{k+1} - x_{\varepsilon_k}\|^2  + \left( \tau_k - \dfrac{\delta\sqrt{s \varepsilon_k}}{1 - \delta\sqrt{s \varepsilon_k}} \right)\|\dot{x}_k\|^2 \notag \\
     & + \left(\dot{\tau}_k + \tau_{k+1} \tau_k  - \dfrac{\tau_{k+1} \delta \sqrt{s\varepsilon_k}}{1 - \delta \sqrt{s\varepsilon_k}} \right) \langle x_{k+1} - x_{\varepsilon_k}, \dot{x}_k \rangle  - \dfrac{s\tau_{k+1}}{1 - \delta \sqrt{s\varepsilon_k}} \langle \nabla \varphi_k(y_k), x_{k+1} - x_{\varepsilon_k} \rangle \notag \\
     &    - \dfrac{s}{1 -\delta \sqrt{s \varepsilon_k}} \langle \nabla \varphi_k(y_k) , \dot{x}_k \rangle  - \tau_{k+1} \tau_k \langle x_{k+1} - x_{\varepsilon_k}, \dot{x}_{\varepsilon_{k-1}} \rangle - \tau_k \langle \dot{x}_k, \dot{x}_{\varepsilon_{k-1}} \rangle. \label{difference-mixed-energy}
\end{align}
We are going to estimate the term $ \langle \nabla\varphi_k(y_k), x_{k+1} - x_{\varepsilon_k} \rangle$. Since $\lim_{k \to +\infty}\varepsilon_k = 0$ and the parameters $\lambda, s, \delta$ are positive, there exists a positive integer $N$ (without loss of generality, assume that $k_0 \geq N$) such that the following inequalities hold for all $k \geq N$:
\begin{equation}
    1-s\varepsilon_k > 0;~1-\lambda\sqrt{s\varepsilon_k} > 0;~ 1-\delta\sqrt{s\varepsilon_k} > 0. \label{condition:positive_1minusvarepsilon}
\end{equation}
Applying Lemma \ref{lemma:extended-descent-lemma} in Appendix~\ref{sec:appendix} for the $\varepsilon_k$-strongly convex function $\varphi_k$ with $y = y_k$, $x = x_{\varepsilon_k}$, and using the facts that $x_{k+1} = y_k - s \nabla \varphi_k(y_k)$ and the gradient $\nabla \varphi_k$ is $L_k$-Lipschitz continuous with $L_k := L + \varepsilon_k$, we obtain
\begin{align} \label{descent_x_epsilon}
    \varphi_k(x_{k+1}) - \varphi_k(x_{\varepsilon_k}) \leq \langle \nabla \varphi_k(y_k), y_k - x_{\varepsilon_k} \rangle + \left( \dfrac{L_k s^2}{2} - s \right) \| \nabla \varphi_k(y_k)\|^2 - \dfrac{\varepsilon_k}{2} \|y_k - x_{\varepsilon_k}\|^2.
\end{align}
We have
\begin{eqnarray*}
    &&  \|x_{k+1} - x_{\varepsilon_k}\|^2 = \| y_k - x_{\varepsilon_k} - s\nabla \varphi_k(y_k)\|^2  \\
    & =& \|y_k - x_{\varepsilon_k}\|^2 -2s \langle \nabla\varphi_k(y_k), y_k - x_{\varepsilon_k} \rangle + s^2 \|\nabla \varphi_k(y_k)\|^2 \leq \|y_k - x_{\varepsilon_k}\|^2 + s^2 \|\nabla \varphi_k(y_k)\|^2   \\
    && + 2s \left[ (\varphi_k(x_{\varepsilon_k})-\varphi_k(x_{k+1})) + \left( \dfrac{L_k s^2}{2} - s \right) \| \nabla \varphi_k(y_k)\|^2 - \dfrac{\varepsilon_k}{2} \|y_k - x_{\varepsilon_k}\|^2 \right]  \\
    & =& (1-s\varepsilon_k)\|y_k - x_{\varepsilon_k}\|^2 +  s^2\left( L_k s - 1 \right) \| \nabla \varphi_k(y_k)\|^2 + 2s(\varphi_k(x_{\varepsilon_k})-\varphi_k(x_{k+1})).
\end{eqnarray*}
This is equivalent to  
\begin{align} \label{x_k_x_epsilon}
    \|y_k - x_{\varepsilon_k}\|^2  \geq   \dfrac{s^2\left(1-L_k s\right) \| \nabla \varphi_k(y_k)\|^2  -2s(\varphi_k(x_{\varepsilon_k})-\varphi_k(x_{k+1}))+\|x_{k+1} - x_{\varepsilon_k}\|^2}{(1-s\varepsilon_k)}.
\end{align}
From \eqref{descent_x_epsilon} and \eqref{x_k_x_epsilon}, we derive
\begin{align}
    & - \langle \nabla\varphi_k(y_k), x_{k+1} - x_{\varepsilon_k} \rangle =  - \langle \nabla \varphi_k(y_k), y_k - x_{\varepsilon_k} \rangle - \langle \nabla \varphi_k(y_k), x_{k+1} - y_k \rangle \notag \\
    & = - \langle \nabla \varphi_k(y_k), y_k - x_{\varepsilon_k} \rangle - \langle \nabla \varphi_k(y_k), - s\nabla \varphi_k(y_k) \rangle \notag\\
     & \leq \left( \varphi(x_{\varepsilon_k}) - \varphi_k(x_{k+1}) \right) + \left(\dfrac{L_k s^2}{2} -s \right) \| \nabla \varphi_k(y_k)\|^2  - \dfrac{\varepsilon_k}{2}\|y_k - x_{\varepsilon_k}\|^2 + s \|\nabla\varphi_k(y_k)\|^2 \notag \\
     & \leq \left( \varphi(x_{\varepsilon_k}) - \varphi_k(x_{k+1}) \right) + \left( \dfrac{L_k s^2}{2} + \dfrac{\varepsilon_ks^2\left( L_k s - 1 \right)}{2(1-s\varepsilon_k)}\right) \| \nabla \varphi_k(y_k)\|^2 \notag \\
     & -  \dfrac{\varepsilon_k}{2(1-s\varepsilon_k)} \|x_{k+1} - x_{\varepsilon_k}\|^2 + \dfrac{s\varepsilon_k}{(1-s\varepsilon_k)}  \left( \varphi(x_{\varepsilon_k}) - \varphi_k(x_{k+1}) \right) \notag \\
     & = \dfrac{1}{1-s\varepsilon_k} \left[\left( \varphi(x_{\varepsilon_k}) - \varphi_k(x_{k+1}) \right) + \dfrac{Ls^2}{2}  \| \nabla \varphi_k(y_k)\|^2 -  \dfrac{\varepsilon_k}{2} \|x_{k+1} - x_{\varepsilon_k}\|^2 \right]. \label{grad-phi-x-xepsilon}
\end{align}
We continue to estimate the terms $\langle \dot{x}_k, \dot{x}_{\varepsilon_{k-1}} \rangle$ and $\langle x_{k+1} - x_{\varepsilon_k}, \dot{x}_{\varepsilon_{k-1}} \rangle$.
Using Lemma \ref{lemma:visco-curve}(i) and Lemma \ref{lemma:difference-tikhonov-value}(ii), we have
\begin{align} \label{xdot_inequality-1}
    -\tau_k \langle \dot{x}_k, \dot{x}_{\varepsilon_{k-1}} \rangle \leq \dfrac{a\tau_k}{2} \| \dot{x}_{\varepsilon_{k-1}}\|^2 + \dfrac{\tau_k}{2a} \|\dot{x}_k\|^2 \leq \dfrac{a\tau_k}{2} \dfrac{(\dot{\varepsilon}_{k-1})^2}{\varepsilon_{k-1}^2} \|x^*\|^2 + \dfrac{\tau_k}{2a} \|\dot{x}_k\|^2,
\end{align}
and
\begin{align} \label{xdot_inequality-2}
    -\tau_{k+1} \tau_k \langle x_{k+1} - x_{\varepsilon_k}, \dot{x}_{\varepsilon_{k-1}} \rangle \leq \dfrac{b\tau_k}{2} \dfrac{(\dot{\varepsilon}_{k-1})^2}{\varepsilon_{k-1}^2} \|x^*\|^2 + \dfrac{\tau_k \tau_{k+1}^2}{2b} \|x_{k+1} - x_{\varepsilon_k}\|^2.
\end{align}
We evaluate the last term in the mixed energy component as follows:
\begin{align}
    & \mu_{k+1} E_{\mathrm{mix},k+1}  = \dfrac{\mu_{k+1}}{2} \| \tau_{k+1} (x_{k+1} - x_{\varepsilon_k}) + \dot{x}_k\|^2 \notag \\
    & = \dfrac{\mu_{k+1} \tau_{k+1}^2}{2} \|x_{k+1} - x_{\varepsilon_k}\|^2 + \dfrac{\mu_{k+1}}{2}\| \dot{x}_k\|^2 + \mu_{k+1} \tau_{k+1} \langle x_{k+1} -x_{\varepsilon_k}, \dot{x}_k \rangle. \label{mu-mixed-energy}
\end{align}
It follows from \eqref{difference-mixed-energy}, \eqref{grad-phi-x-xepsilon}, \eqref{xdot_inequality-1}, \eqref{xdot_inequality-2} and \eqref{mu-mixed-energy} that
\begin{align}
    &\quad E_{\mathrm{mix},k+1} - E_{\mathrm{mix},k} + \mu_{k+1} E_{\mathrm{mix},k+1} 
     \leq -\dfrac{s\tau_{k+1}}{(1 - \delta \sqrt{s\varepsilon_k})(1-s\varepsilon_k)} \left( \varphi_k(x_{k+1}) - \varphi_k(x_{\varepsilon_k}) \right) \notag \\
    & + \left( \tau_{k+1} \dot{\tau}_k + \dfrac{\tau_k \tau_{k+1}^2}{2b} + \dfrac{\mu_{k+1}\tau_{k+1}^2}{2} - \dfrac{s\varepsilon_k\tau_{k+1}}{2(1 - \delta \sqrt{s \varepsilon_k})(1-s\varepsilon_k)}\right) \| x_{k+1} - x_{\varepsilon_k}\|^2 \notag \\
    & + \left( \tau_k - \dfrac{\delta \sqrt{s \varepsilon_k}}{1 - \delta \sqrt{s \varepsilon_k}} + \dfrac{\tau_k}{2a} + \dfrac{\mu_{k+1}}{2}\right) \| \dot{x}_k\|^2 + (a+b) \dfrac{\tau_k}{2} \dfrac{(\dot{\varepsilon}_{k-1})^2}{\varepsilon_{k-1}^2}  \|x^*\|^2 \notag \\
    & + \dfrac{L s^3 \tau_{k+1}}{2(1- \delta \sqrt{s \varepsilon_k})(1-s\varepsilon_k)} \| \nabla \varphi_k(y_k)\|^2 - \dfrac{s}{1 -\delta \sqrt{s\varepsilon_k}} \langle \nabla \varphi_k(y_k) , \dot{x}_k \rangle \notag\\
    & + \left( \tau_{k+1} \tau_k + \dot{\tau}_k - \dfrac{\tau_{k+1} \delta \sqrt{s \varepsilon_k}}{1 - \delta \sqrt{s \varepsilon_k}} + \mu_{k+1} \tau_{k+1} \right) \langle x_{k+1} - x_{\varepsilon_k}, \dot{x}_k \rangle. \label{mixed_energy_estimate}
\end{align}
It is essential to choose the parameters $\mu_{k}$ and $\tau_k$ such that the term $\langle x_{k+1} - x_{\varepsilon_k}, \dot{x}_k \rangle$ vanishes, i.e., its coefficient must be zero:
\begin{align*}
    \tau_{k+1} \tau_k + \dot{\tau}_k - \dfrac{\tau_{k+1} \delta \sqrt{s \varepsilon_k}}{1 - \delta \sqrt{s \varepsilon_k}} + \mu_{k+1} \tau_{k+1}  = 0 
    \Longleftrightarrow  \mu_{k+1} = - \dfrac{\dot{\tau}_k}{\tau_{k+1}} - \tau_k + \dfrac{\delta \sqrt{s \varepsilon_k}}{1 - \delta \sqrt{s \varepsilon_k}}.
\end{align*}
Taking $\tau_k := \lambda\sqrt{s\varepsilon_k}$ where $0< \lambda < \delta$, we yield
\begin{align*}
    \mu_{k+1} = \left(\sqrt{\dfrac{\varepsilon_k}{\varepsilon_{k+1}}}-1 \right) + \left( \dfrac{\delta}{1 - \delta \sqrt{s \varepsilon_k}} -\lambda \right) \sqrt{s \varepsilon_k}.
\end{align*}
For all $k \geq k_0$, we have $$\frac{\delta}{1 - \delta \sqrt{s \varepsilon_k}} -\lambda > \delta - \lambda > 0.$$
Therefore, our choice of $\tau_k$ guarantees that $\mu_k$ is positive for all $k \geq k_0$.

Next, let us estimate the kinetic energy part 
\begin{align*}
    E_{\mathrm{kin},k+1} - E_{\mathrm{kin},k} + \mu_{k+1} E_{\mathrm{kin},k+1}.
\end{align*}
Once again, using the equalities \eqref{phase-space}, we get
\begin{align*}
    \| \dot{x}_{k-1}\|^2 &= \dfrac{1}{(1-\delta \sqrt{s \varepsilon_k})^2} \|\dot{x}_k + s \nabla \varphi_k(y_k) \|^2 \\
    &= \dfrac{1}{(1-\delta \sqrt{s \varepsilon_k})^2} \left( \|\dot{x}_k\|^2 + s^2 \|\nabla \varphi_k(y_k)\|^2 +2s \langle \nabla \varphi_k(y_k) , \dot{x}_k \rangle\right).
\end{align*}
Therefore,
\begin{align}
    & E_{\mathrm{kin},k+1} - E_{\mathrm{kin},k} + \mu_{k+1} E_{\mathrm{kin},k+1} = \dfrac{\beta_{k+1}}{2} \|\dot{x}_k\|^2 - \dfrac{\beta_{k}}{2} \|\dot{x}_{k-1}\|^2 + \dfrac{\mu_{k+1} \beta_{k+1}}{2} \|\dot{x}_k\|^2 \notag \\ &= \left( \dfrac{\beta_{k+1}}{2} - \dfrac{\beta_k}{2(1 - \delta \sqrt{s \varepsilon_k})^2} + \dfrac{\mu_{k+1} \beta_{k+1}}{2} \right) \|\dot{x}_k\|^2 - \dfrac{\beta_k s^2}{2(1- \delta \sqrt{s \varepsilon_k})^2} \|\nabla \varphi_k(y_k)\|^2 \notag \\
    & - \dfrac{\beta_k s}{(1 - \delta \sqrt{s \varepsilon_k})^2} \langle \nabla \varphi_k(y_k), \dot{x}_k \rangle . \label{kinetic-energy-estimate}
\end{align}

Finally, we estimate the potential energy part 
\begin{align*}
    E_{\mathrm{pot},k+1} - E_{\mathrm{pot},k} + \mu_{k+1} E_{\mathrm{pot},k+1}.
\end{align*}
We assume that the sequence $(\alpha_k)_k$ is positive and nonincreasing, with the specific form to be determined later. We now proceed with the following estimation
\begin{align*}
    & E_{\mathrm{pot},k+1} - E_{\mathrm{pot},k} =  \alpha_{k+1} \left( \varphi_{k+1}(x_{k+1}) - \varphi_{k+1}(x_{\varepsilon_{k+1}}) \right) - \alpha_k \left( \varphi_k(x_k) - \varphi_k(x_{\varepsilon_k}) \right)\\
    \leq & \alpha_k( \varphi_{k+1}(x_{k+1}) - \varphi_k(x_k)) + \alpha_k( \varphi_{k}(x_{\varepsilon_k}) - \varphi_{k+1}(x_{\varepsilon_{k+1}})).
\end{align*}    
From the definition of $\varphi_k$, Lemma \ref{lemma:visco-curve}(i), and Lemma \ref{lemma:difference-tikhonov-value}(i), we yield
$$\varphi_{k+1}(x_{k+1}) = f(x_{k+1}) + \dfrac{\varepsilon_{k+1}}{2}\|x_{k+1}\|^2 =  \varphi_{k}(x_{k+1}) + \dfrac{\varepsilon_{k+1} - \varepsilon_k}{2} \|x_{k+1}\|^2 \leq \varphi_{k}(x_{k+1})$$
and
$$\varphi_{k}(x_{\varepsilon_k}) - \varphi_{k+1}(x_{\varepsilon_{k+1}}) \leq \dfrac{\varepsilon_k - \varepsilon_{k+1}}{2} \|x_{\varepsilon_{k+1}}\|^2 \leq -\dfrac{\dot{\varepsilon}_k}{2} \|x^*\|^2.$$
Therefore, 
$$E_{\mathrm{pot},k+1} - E_{\mathrm{pot},k} \leq \alpha_k ( \varphi_k(x_{k+1}) - \varphi_k(x_k)) -\dfrac{\alpha_k\dot{\varepsilon}_k}{2} \|x^*\|^2.$$
Similarly to \eqref{descent_x_epsilon} but with $x=x_k$ and using \eqref{phase-space}, we obtain 
\begin{align*}
    & \varphi_k(x_{k+1}) - \varphi_k(x_k) \leq \langle \nabla \varphi_k(y_k), y_k - x_k \rangle + \left( \dfrac{L_ks^2}{2} -s \right) \|\nabla \varphi_k(y_k)\|^2 - \dfrac{\varepsilon_k}{2} \|y_k - x_k\|^2 \\
    &= \langle \nabla\varphi_k(y_k), \dot{x}_k \rangle + \dfrac{L_k s^2}{2} \|\nabla \varphi_k(y_k)\|^2 - \dfrac{\varepsilon_k}{2} \| \dot{x}_k + s\nabla \varphi_k(y_k)\|^2\\
    & = (1 - s\varepsilon_k) \langle \nabla \varphi_k(y_k), \dot{x}_k \rangle - \dfrac{\varepsilon_k}{2} \| \dot{x}_k\|^2 + \dfrac{Ls^2}{2} \| \nabla \varphi_k(y_k)\|^2.
\end{align*}
Therefore,
\begin{align*}
    & E_{\mathrm{pot},k+1} - E_{\mathrm{pot},k} \notag \\
    \leq & ~ \alpha_k(1-s\varepsilon_k) \langle \nabla \varphi_k(y_k), \dot{x}_k \rangle - \dfrac{\alpha_k \varepsilon_k}{2}\|\dot{x}_k\|^2 + \dfrac{Ls^2 \alpha_k}{2} \| \nabla \varphi_k(y_k)\|^2 - \dfrac{\alpha_k \dot{\varepsilon}_k}{2}\|x^*\|^2.
\end{align*}
We evaluate the term $\varphi_{k+1}(x_{k+1}) - \varphi_{k+1}(x_{\varepsilon_{k+1}})$ in $E_{\mathrm{pot},k+1}$ as follows:
\begin{align*}
    & \varphi_{k+1}(x_{k+1}) - \varphi_{k+1}(x_{\varepsilon_{k+1}}) \\
     = & \left( \varphi_k(x_{k+1}) - \varphi_k(x_{\varepsilon_k}) \right) +  \left( \varphi_{k+1}(x_{k+1}) - \varphi_{k}(x_{k+1})\right) + \left( \varphi_{k}(x_{\varepsilon_k}) - \varphi_{k+1}(x_{\varepsilon_{k+1}}) \right) \\
     \leq & \left( \varphi_k(x_{k+1}) - \varphi_k(x_{\varepsilon_k}) \right) - \dfrac{\dot{\varepsilon}_k}{2} \|x^*\|^2.
\end{align*}
Hence, 
\begin{align}
    & E_{\mathrm{pot},k+1} - E_{\mathrm{pot},k} + \mu_{k+1} E_{\mathrm{pot},k+1} \leq  \mu_{k+1} \alpha_{k+1} \left( \varphi_k(x_{k+1}) - \varphi_k(x_{\varepsilon_k}) \right)   - \dfrac{\alpha_k \varepsilon_k}{2}\|\dot{x}_k\|^2 \notag \\
    & + \dfrac{Ls^2 \alpha_k}{2} \| \nabla \varphi_k(y_k)\|^2  + \alpha_k(1-s\varepsilon_k) \langle \nabla \varphi_k(y_k), \dot{x}_k \rangle - \dfrac{\alpha_k(1+\mu_{k+1}) \dot{\varepsilon}_k}{2}\|x^*\|^2. \label{potential-energy-estimate}
\end{align}

From all the component estimates of $E_k$ given in \eqref{mixed_energy_estimate}, \eqref{kinetic-energy-estimate}, and \eqref{potential-energy-estimate}, we have 
\begin{align}
    &E_{k+1} - E_k + \mu_{k+1} E_{k+1} \notag \\
    \leq & \left( \mu_{k+1} \alpha_{k+1} -\dfrac{s\tau_{k+1}}{(1 - \delta \sqrt{s\varepsilon_k})(1-s\varepsilon_k)}\right) \left( \varphi_k(x_{k+1}) - \varphi_k(x_{\varepsilon_k}) \right) \notag \\
    & + \left( \tau_{k+1} \dot{\tau}_k + \dfrac{\tau_k \tau_{k+1}^2}{2b} + \dfrac{\mu_{k+1}\tau_{k+1}^2}{2} - \dfrac{s\varepsilon_k\tau_{k+1}}{2(1 - \delta \sqrt{s \varepsilon_k})(1-s\varepsilon_k)} \right) \|x_{k+1} - x_{\varepsilon_k}\|^2 \notag\\
    &+ \left( \tau_k - \dfrac{\delta \sqrt{s \varepsilon_k}}{1 - \delta \sqrt{s \varepsilon_k}} + \dfrac{\tau_k}{2a} + \dfrac{\mu_{k+1}+\beta_{k+1}}{2} - \dfrac{\beta_{k}}{2(1 - \delta \sqrt{s \varepsilon_k})^2} + \dfrac{\mu_{k+1} \beta_{k+1}}{2} - \dfrac{\alpha_k \varepsilon_k}{2} \right) \| \dot{x}_k\|^2 \notag \\
    &+ \left( \dfrac{Ls^2 \alpha_k}{2} + \dfrac{L s^3 \tau_{k+1}}{2(1- \delta \sqrt{s \varepsilon_k})(1-s\varepsilon_k)} - \dfrac{\beta_k s^2}{2(1 - \delta \sqrt{s \varepsilon_k})^2}\right)\|\nabla \varphi_k(y_k)\|^2 \notag \\
    &+ \left( (a+b)\tau_k \dfrac{(\dot{\varepsilon}_{k-1})^2}{\varepsilon_{k-1}^2}  - \alpha_k(1+\mu_{k+1}) \dot{\varepsilon}_{k} \right) \dfrac{\|x^*\|^2}{2} \notag \\
    &+ \left( - \dfrac{s}{1 - \delta \sqrt{s \varepsilon_k}} - \dfrac{\beta_k s}{(1 - \delta \sqrt{s \varepsilon_k})^2} + \alpha_k(1 - s\varepsilon_k)\right) \langle \nabla \varphi_k(y_k), \dot{x}_k \rangle. \label{ineq:estimate_Ek}
\end{align}
In order to eliminate the term $\langle \nabla \varphi_k(y_k), \dot{x}_k \rangle$, we select the parameters $\alpha_k$ and $\beta_k$ such that 
\begin{eqnarray*}
     && - \dfrac{s}{1 - \delta \sqrt{s \varepsilon_k}} - \dfrac{\beta_k s}{(1 - \delta \sqrt{s \varepsilon_k})^2} + \alpha_k(1 - s\varepsilon_k) = 0 \\
    \Longleftrightarrow && \alpha_k = \dfrac{s}{(1 - \delta \sqrt{s \varepsilon_k})(1 - s\varepsilon_k)} \left(1 + \dfrac{\beta_k}{1 - \delta \sqrt{s \varepsilon_k}} \right).
\end{eqnarray*}
By setting $\beta_k := q ( 1- \delta \sqrt{s \varepsilon_k})$ where $q > 0$, we obtain 
\begin{align*}
    \alpha_k = \dfrac{(1 + q)s}{(1 - \delta \sqrt{s \varepsilon_k})(1 - s\varepsilon_k)}.
\end{align*}
Since $(\varepsilon_k)_k$ is a nonincreasing sequence, so is $(\alpha_k)_k$.

Let  $\zeta_k$, $\eta_k$, $\nu_k$, $\xi_k$, and $\theta_k$ denote, respectively, the coefficients of the terms $\varphi_k(x_{k+1}) - \varphi_k(x_{\varepsilon_k})$, $\|x_{k+1} - x_{\varepsilon_k}\|^2$, $\|\dot{x}_k\|^2$, $\| \nabla \varphi_k(y_k)\|^2$, and $\|x^*\|^2/2$, the inequality \eqref{ineq:estimate_Ek} simplifies to 
\begin{eqnarray*}
    && E_{k+1} - E_k + \mu_{k+1} E_{k+1} \\
    &\leq & \zeta_k \left( \varphi_k(x_{k+1}) - \varphi_k(x_{\varepsilon_k}) \right) + \eta_k \|x_{k+1} - x_{\varepsilon_k}\|^2 + \nu_k \|\dot{x}_k\|^2 + \xi_k \| \nabla \varphi_k(y_k)\|^2 + \dfrac{\theta_k}{2} \|x^*\|^2.
\end{eqnarray*}
As $\dot{\varepsilon}_{k} \leq 0$ for all $k$, the coefficient $\theta_k = (a+b)\tau_k \frac{(\dot{\varepsilon}_{k-1})^2}{\varepsilon_{k-1}^2}  - \alpha_k(1+\mu_{k+1}) \dot{\varepsilon}_{k} > 0, \forall k \geq k_0$.
Now we show that $\zeta_k$, $\eta_k$, $\nu_k$, and $\xi_k$ are nonpositive under the conditions of $(\varepsilon_k)_k$ in \eqref{condition:tikhonov-sequence}.
Before starting, let us note two inequalities used in the following: for all $k \geq k_0$, 
\begin{equation*}
    \dot{\tau}_k = \tau_{k+1} - \tau_{k} = \lambda\sqrt{s}(\sqrt{\varepsilon_{k+1}} - \sqrt{\varepsilon_{k}}) \leq 0, \quad \beta_k = q (1 - \delta\sqrt{s \varepsilon_k}) < q.
\end{equation*}

For brevity, set $\sigma_k := (1-\delta\sqrt{s\varepsilon_k})(1-s\varepsilon_k),$ then we have

\begin{flalign*}
\zeta_k
&:= \mu_{k+1}\alpha_{k+1}-\frac{s\tau_{k+1}}{\sigma_k}
\le \mu_{k+1}\alpha_k-\frac{s\tau_{k+1}}{\sigma_k}
= \frac{s\left[(1+q)\mu_{k+1}-\tau_{k+1}\right]}{\sigma_k}
= \frac{s\sqrt{s\varepsilon_k}}{\sigma_k}\,\Theta_k \le 0,&
\end{flalign*}
where
\[
\Theta_k :=
(1+q)\Bigl(\frac{1}{\sqrt{s\varepsilon_{k+1}}}-\frac{1}{\sqrt{s\varepsilon_k}}
+\frac{\delta}{1-\delta\sqrt{s\varepsilon_k}}-\lambda\Bigr)
-\lambda\sqrt{\frac{\varepsilon_{k+1}}{\varepsilon_k}}.
\]

\begin{flalign*}
\eta_k
&:= \tau_{k+1}\dot{\tau}_k+\frac{\tau_k\tau_{k+1}^2}{2b}
+\frac{\mu_{k+1}\tau_{k+1}^2}{2}
-\frac{s\varepsilon_k\tau_{k+1}}{2\sigma_k} &\\
&= \tau_{k+1}^2 \left[\dfrac{\dot{\tau}_k}{\tau_{k+1}} + \dfrac{\tau_k}{2b} + \dfrac{1}{2} \left(- \dfrac{\dot{\tau}_k}{\tau_{k+1}} - \tau_k  + \dfrac{\delta \sqrt{s \varepsilon_k}}{1 - \delta \sqrt{s \varepsilon_k}} \right) - \dfrac{s\varepsilon_k}{2\tau_{k+1}\sigma_k} \right] &\\
&\le \frac{\tau_{k+1}^2}{2}\left(\frac{1-b}{b}\tau_k
+\frac{\delta\sqrt{s\varepsilon_k}}{1-\delta\sqrt{s\varepsilon_k}}
-\frac{s\varepsilon_k}{\lambda\sqrt{s\varepsilon_{k+1}}}\right)
(\mbox{since } \dot{\tau}_k<0)&\\
&\le \frac{\tau_{k+1}^2\sqrt{s\varepsilon_k}}{2\lambda}
\left(\frac{1-b}{b}\lambda^2+\frac{\delta\lambda}{1-\delta\sqrt{s\varepsilon_k}}-1\right)
\le 0.&
\end{flalign*}

\begin{flalign*}
\nu_k
&:= \tau_k-\frac{\delta\sqrt{s\varepsilon_k}}{1-\delta\sqrt{s\varepsilon_k}}
+\frac{\tau_k}{2a}+\frac{\mu_{k+1}}{2}
+\left(\frac{\beta_{k+1}}{2}-\frac{\beta_k}{2(1-\delta\sqrt{s\varepsilon_k})^2}\right)
+\frac{\mu_{k+1}\beta_{k+1}}{2}-\frac{\alpha_k\varepsilon_k}{2} &\\
&\le \dfrac{1+a(1-q)}{2a}\lambda\sqrt{s \varepsilon_k} -\dfrac{\delta \sqrt{s \varepsilon_k}}{2(1 - \delta \sqrt{s \varepsilon_k})}   + \dfrac{1+q}{2}\left(\sqrt{\dfrac{\varepsilon_k}{\varepsilon_{k+1}}}-1 \right) +\dfrac{q\sqrt{s \varepsilon_k}\delta^2 \sqrt{s \varepsilon_{k+1}}}{2(1 - \delta \sqrt{s \varepsilon_k})} & \\
&\le \frac{\sqrt{s\varepsilon_k}}{2}\,\Psi_k \le 0,&
\end{flalign*}
where
\[
\Psi_k =
\frac{(1+a(1-q))\lambda}{a}
-\delta
+(1+q)\left(\frac{1}{\sqrt{s\varepsilon_{k+1}}}-\frac{1}{\sqrt{s\varepsilon_k}}\right)
+\frac{q\delta^2\sqrt{s\varepsilon_{k+1}}}{1-\delta\sqrt{s\varepsilon_k}}.
\]

\begin{flalign*}
\xi_k
&:= \frac{Ls^2\alpha_k}{2}
+\frac{Ls^3\tau_{k+1}}{2\sigma_k}
-\frac{\beta_k s^2}{2(1-\delta\sqrt{s\varepsilon_k})^2}
= \frac{s^2}{2\sigma_k}
\Bigl[Ls\bigl(1+\lambda\sqrt{s\varepsilon_{k+1}}\bigr)+q(L_k s-1)\Bigr]\le 0.&
\end{flalign*}

Since the sequence $(\varepsilon_k)_k$ satisfies the system of conditions \eqref{condition:tikhonov-sequence}, it follows that all the sequences $\zeta_k$, $\eta_k$, $\nu_k$, $\xi_k$ are not positive for all $k \geq k_0$. Therefore,
\begin{equation}
    E_{k+1} - E_k + \mu_{k+1} E_{k+1} \leq \dfrac{\theta_k}{2}  \|x^*\|^2,~ \forall k \geq k_0.
\end{equation}
 Multiplying both sides by $\displaystyle \gamma_{k} := \prod_{j=k_0}^k  (1 + \mu_j) > 0$, we obtain
    \begin{align*}
        \gamma_k (1 + \mu_{k+1}) E_{k+1} - \gamma_k E_k \leq  \dfrac{\gamma_k\theta_k}{2} \|x^*\|^2
        \Leftrightarrow
        \gamma_{k+1} E_{k+1} - \gamma_k E_k \leq \dfrac{\gamma_k\theta_k}{2} \|x^*\|^2.
    \end{align*}
    Telescoping the sum from $k_0$ to $k$, we yield
    \begin{align*}
        \gamma_{k+1} E_{k+1} - \gamma_{k_0} E_{k_0} \leq \dfrac{1}{2}\left( \sum_{j=k_0}^k \gamma_j \theta_j \right) \|x^*\|^2.
    \end{align*}
    Dividing both sides by $\gamma_{k+1}$, we conclude
    \begin{align*}
        E_{k+1} \leq \dfrac{\gamma_{k_0}}{\gamma_{k+1}}E_{k_0} + \dfrac{1}{2\gamma_{k+1}} \left( \sum_{j = k_0}^k \gamma_{j} \theta_j \right) \|x^*\|^2.
    \end{align*}
    The proof is complete.
\end{proof}

The conditions \eqref{condition:tikhonov-sequence} are stated for an arbitrary Tikhonov schedule $(\varepsilon_k)_k$. A natural question is whether the constants $a,b,\lambda,q$ can be chosen independently of the particular form of $(\varepsilon_k)_k$. This is clarified in Proposition~\ref{proposition:constant-parameter}, and representative admissible choices are summarized in Table~\ref{table:condition_necessary_parameters}.

\begin{proposition} \label{proposition:constant-parameter}
    Assume that a sequence $(\varepsilon_k)_k$ satisfies \eqref{condition:tikhonov-sequence}. Then the existence of positive constants $a,b,\lambda,q$ fulfilling \eqref{condition:tikhonov-sequence} necessarily requires the following constraints:
    \begin{align} \label{condition:constant-parameter} \tag{$\mathcal{K}_1$}
        \begin{cases}
            (i) & \dfrac{1 + q}{2 + q} \delta < \lambda < \delta, \\
            (ii) & (1 + a(1 - q)) \lambda - a \delta < 0, \\
            (iii) & \dfrac{1-b}{b}\lambda^2  + \delta \lambda - 1 < 0, \\
            (iv) & Ls < \dfrac{q}{q + 1}.
        \end{cases}
    \end{align}
\end{proposition}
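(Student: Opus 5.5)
The plan is to derive each constraint in \eqref{condition:constant-parameter} from the matching inequality in \eqref{condition:tikhonov-sequence}, evaluated at a single sufficiently large index $k\ge k_0$. In each case I drop the $k$-dependent terms, using the sign information available from assumption \eqref{assumption}. Since $\varepsilon_k\to 0$, I may assume (enlarging $k_0$ if needed, as in \eqref{condition:positive_1minusvarepsilon}) that $0<1-\delta\sqrt{s\varepsilon_k}<1$. Because $(\varepsilon_k)_k$ is positive and nonincreasing, three elementary facts follow:
\[
\frac{1}{\sqrt{s\varepsilon_{k+1}}}-\frac{1}{\sqrt{s\varepsilon_k}}\ge 0,\qquad
\sqrt{\frac{\varepsilon_{k+1}}{\varepsilon_k}}\le 1,\qquad
\frac{\delta}{1-\delta\sqrt{s\varepsilon_k}}>\delta .
\]
The strict inequality uses $\varepsilon_k>0$. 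It is this strictness that turns the non-strict inequalities of \eqref{condition:tikhonov-sequence} into the strict ones of \eqref{condition:constant-parameter}.

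\textbf{Constraint (i).} From \eqref{condition:tikhonov-sequence}(i), the left-hand side is strictly greater than $(1+q)(\delta-\lambda)-\lambda$. To see this, bound the difference term below by $0$, bound $\delta/(1-\delta\sqrt{s\varepsilon_k})$ below strictly by $\delta$, and bound $\lambda\sqrt{\varepsilon_{k+1}/\varepsilon_k}$ above by $\lambda$. Hence $(1+q)(\delta-\lambda)-\lambda<0$, which rearranges to $\lambda>\frac{1+q}{2+q}\delta$. The upper bound $\lambda<\delta$ is already part of the definition of \eqref{condition:tikhonov-sequence}.

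\textbf{Constraints (ii) and (iii).} For (ii), I take \eqref{condition:tikhonov-sequence}(iii), drop the nonnegative difference term, and drop the strictly positive term $q\delta^2\sqrt{s\varepsilon_{k+1}}/(1-\delta\sqrt{s\varepsilon_k})$. This gives $\frac{(1+a(1-q))\lambda}{a}-\delta<0$. Multiplying by $a>0$ yields (ii). For (iii), I use \eqref{condition:tikhonov-sequence}(ii) together with $\delta\lambda/(1-\delta\sqrt{s\varepsilon_k})>\delta\lambda$, which gives $\frac{1-b}{b}\lambda^2+\delta\lambda-1<0$.

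\textbf{Constraint (iv).} In \eqref{condition:tikhonov-sequence}(iv), I use $Ls(1+\lambda\sqrt{s\varepsilon_{k+1}})>Ls$ and $q(L+\varepsilon_k)s>qLs$. This gives $Ls+q(Ls-1)<0$, i.e. $(1+q)Ls<q$, which is (iv).

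There is no real obstacle in this argument. The only point requiring care is keeping track of which dropped terms are strictly positive, so that strict inequalities are obtained. This relies on $\varepsilon_k>0$ and on the positivity of $1-\delta\sqrt{s\varepsilon_k}$ for large $k$.
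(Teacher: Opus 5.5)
Your proof is correct and follows the same route as the paper. Each constraint of $(\mathcal{K}_1)$ is read off from the matching inequality of $(\mathcal{K}_0)$ by discarding the $k$-dependent terms, using positivity and monotonicity of $(\varepsilon_k)_k$; you are in fact more explicit than the paper about which discarded term is strictly positive and about needing $0<1-\delta\sqrt{s\varepsilon_k}<1$ for large $k$, which is what justifies the strict inequalities. The rest of the paper's proof analyses the solvability of $(\mathcal{K}_1)$ in $\lambda,a,b$ to build Table~\ref{table:condition_necessary_parameters}; that goes beyond the necessity claim in the statement and is not needed here.
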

  
\begin{proof}
    Since the sequence $(\varepsilon_k)_k$ is positive and nonincreasing, we note
    \begin{align*}
        1 - s\varepsilon_k < 1; ~ 1 -\delta\sqrt{s\varepsilon_k}<1; ~\dfrac{1}{\sqrt{s\varepsilon_{k+1}}} - \dfrac{1}{\sqrt{s\varepsilon_k}} \geq 0; ~ \sqrt{\dfrac{\varepsilon_{k+1}}{\varepsilon_k}} \leq 1.
    \end{align*}
    Firstly, we identify from condition \eqref{condition:tikhonov-sequence}$(iv)$ the following inequality
    \begin{equation*}
        Ls + q(Ls-1) < 0 \Longleftrightarrow Ls < \dfrac{q}{q + 1}.
    \end{equation*}
    Secondly, it follows from $(i)$ of \eqref{condition:tikhonov-sequence} that
    \begin{align}
        (1+q)(\delta - \lambda) - \lambda < 0 \Longleftrightarrow \dfrac{1 + q}{2 + q} \delta < \lambda. \label{condition:K1_i_lower}
    \end{align}
    Similarly, from condition $(iii)$ of \eqref{condition:tikhonov-sequence} we obtain the following inequality
    \begin{align}
        (1 + a(1 - q)) \lambda - a \delta < 0. \label{condition:K1_i_upper}
    \end{align}
    We must solve this inequality in $\lambda$ to verify its existence.\\
    $\bullet$ If $1 + (1-q)a > 0$, i.e., $q < 1 + \dfrac{1}{a}$, then this inequality is equivalent to $\lambda < \dfrac{a\delta}{1 + a(1 - q)}$. 
    Comparing the lower bound on $\lambda$ from \eqref{condition:K1_i_lower} with its upper bound from \eqref{condition:K1_i_upper}, we find a necessary condition on $a$ and $q$ to ensure the existence of such a $\lambda$, specifically,
    \begin{align*}
        \dfrac{1 + q}{2 + q} < \dfrac{a}{1 + a(1 - q)} \Longleftrightarrow a > \dfrac{q+1}{q^2+q+1}.
    \end{align*}
    Hence, if $\left(0 < q \leq 1 \textrm{ and } a > \dfrac{q+1}{q^2+q+1}\right)$ or $\left(q > 1 \textrm{ and } \dfrac{1}{q-1} > a > \dfrac{q+1}{q^2+q+1}\right)$, then 
    \begin{align}
        \dfrac{1 + q}{2 + q} \delta < \lambda < \dfrac{a}{1 + a(1 - q)} \delta. \label{condition:K1_i}
    \end{align}
    $\bullet$ If $1 + (1-q)a \leq 0$, i.e., $q >1$ and $a \geq \dfrac{1}{q-1}$, then the inequality \eqref{condition:K1_i_upper} is true for all $\lambda$. In this case, $\lambda$ only needs to satisfy the condition \eqref{condition:K1_i_lower}. \\
    Thirdly, we derive from condition $(ii)$ in \eqref{condition:tikhonov-sequence} the following inequality
    \begin{equation}
        \dfrac{1-b}{b}\lambda^2  + \delta \lambda - 1 < 0. \label{condition:K1_ii_initial}
    \end{equation}    
    The solutions to the inequality \eqref{condition:K1_ii_initial} under the conditions of $b$ and $\delta$ are given by:
    \begin{equation} \label{condition:K1_ii}
      \lambda \in
    \begin{cases}
         \left]0, \frac{1}{\delta} \right[ & \textrm{if } b = 1 \\
         \left] 0, \lambda_{-} \right[ & \textrm{if } 0 < b < 1 \\
         ]0, +\infty[ & \textrm{if } b > 1 \textrm{ and } 0 < \delta < 2 \textrm{ and } b > \dfrac{4}{4 - \delta^2} \\
         ]0, +\infty[\backslash\{\lambda_{0} \} & \textrm{if } b > 1 \textrm{ and } 0 < \delta < 2 \textrm{ and } b = \dfrac{4}{4 - \delta^2} \\
         \left] 0, \lambda_{-} \right[ \cup ]\lambda_{+}, +\infty[ & \textrm{if } b > 1 \textrm{ and} \left( \delta \geq 2 \textrm{ or} \left( 0 < \delta < 2 \textrm{ and } b < \dfrac{4}{4 - \delta^2} \right) \right)
    \end{cases} 
    \end{equation} 
    where 
    $$\lambda_{\pm} := \frac{b}{2(b-1)} \left(\delta \pm \sqrt{\delta^2 + \frac{4(1-b)}{b}} \right) \textrm{ and } \lambda_{0} := \frac{b\delta}{2(b-1)}.$$
    We are going to verify the existence of $\lambda$ in \eqref{condition:K1_i_lower}, \eqref{condition:K1_i} and \eqref{condition:K1_ii}.\\
    $\bullet$ In case $0 < \delta < 2$, the conditions \eqref{condition:K1_i_lower} and \eqref{condition:K1_i} are still valid if $b > \frac{4}{4 - \delta^2}$.\\
    $\bullet$ When $0 < q \leq 1$ and $\delta \geq 2$, in order to guarantee the validity of $\lambda$ in \eqref{condition:K1_i} and \eqref{condition:K1_ii}, we are looking for conditions on positive parameters $a$ and $b$ such that $a > \frac{q+1}{q^2+q+1}$, $b > 1$, and
    \begin{equation}
        \lambda_{+} = \frac{b}{2(b-1)} \left(\delta + \sqrt{\delta^2 + \frac{4(1-b)}{b}} \right) < \dfrac{a}{1 + a(1 - q)} \delta. \label{condition:K1_ii_lambda_qlessthan1}
    \end{equation}
    Setting $\kappa := \frac{a}{1 + a(1 - q)}$, this inequality amounts to
    \begin{eqnarray*}
        &&\sqrt{\delta^2 + \frac{4(1-b)}{b}} < \left(\dfrac{2(b-1)\kappa}{b} -1 \right)\delta = \dfrac{b(2\kappa-1)-2\kappa}{b}\delta.
    \end{eqnarray*}
    Necessary conditions for a solution to exist are $\kappa > \frac{1}{2}$ and $b > \frac{2\kappa}{2\kappa-1}$. In this case, both sides are nonnegative and we can thus square the last inequality:
    \begin{eqnarray*}
        &&\delta^2 + \frac{4(1-b)}{b} <  \left(\dfrac{b(2\kappa-1)-2\kappa}{b}\right)^2\delta^2 \Leftrightarrow \frac{4(1-b)}{b} <  \dfrac{(b(2\kappa-1)-2\kappa)^2-b^2}{b^2}\delta^2 \\ 
        &\Leftrightarrow& 4(1-b)b <  4\kappa(b-1)(b(\kappa-1)-\kappa)\delta^2 \Leftrightarrow -b <  b\delta^2\kappa(\kappa-1)-(\kappa\delta)^2 \\ 
        &\Leftrightarrow& b(\delta^2\kappa(\kappa-1)+1) >(\kappa\delta)^2.
    \end{eqnarray*}
    Combining with necessary conditions just mentioned above and noting that $$\frac{(\kappa\delta)^2}{\delta^2\kappa(\kappa-1)+1} > \frac{2\kappa}{2\kappa-1},~\forall \kappa > \frac{1}{2}\left(1+\sqrt{1-\frac{4}{\delta^2}}\right), \forall \delta \geq 2,$$ we can choose conditions for \eqref{condition:K1_ii_lambda_qlessthan1}:
    \begin{eqnarray}
        &&\kappa > \frac{1}{2}\left(1+\sqrt{1-\frac{4}{\delta^2}}\right); b > \frac{(\kappa\delta)^2}{\delta^2\kappa(\kappa-1)+1}, \notag
        \\
        &\textrm{or equivalently},& a > \frac{1+\sqrt{1-\frac{4}{\delta^2}}}{2-\left(1+\sqrt{1-\frac{4}{\delta^2}}\right)(1-q)}; b > \frac{(a\delta)^2}{\delta^2a(aq-1)+(1+a(1-q))^2}. \label{condition:K1_ii_ab}
    \end{eqnarray}
    We observe that if $\kappa > 1$, i.e., $ a > \frac{1}{q}$, then $\frac{a}{1 + a(1 - q)} \delta = \kappa\delta > \delta$. Thus, $\lambda < \delta$. Otherwise (i.e., $a \leq \frac{1}{q}$), $\lambda < \frac{a}{1 + a(1 - q)} \delta$.
    We can easily verify $$\max\left\{\frac{q+1}{q^2+q+1},\frac{1+\sqrt{1-\frac{4}{\delta^2}}}{2-\left(1+\sqrt{1-\frac{4}{\delta^2}}\right)(1-q)}\right\} < \frac{1}{q};~ \frac{(a\delta)^2}{\delta^2a(aq-1)+(1+a(1-q))^2} > 1.$$
    $\bullet$ Similarly, when $q > 1$ and $\delta \geq 2$, we seek conditions on $b > 1$ such that $\lambda_{+} < \delta$. Using the same argument as the case $q \in ]0,1]$ but with $\kappa=1$, we obtain 
    \begin{equation}
        b > \delta^2. \label{condition:K1_ii_b}
    \end{equation}
    Finally, from \eqref{condition:K1_i_lower}, \eqref{condition:K1_i}, \eqref{condition:K1_ii}, \eqref{condition:K1_ii_ab} and \eqref{condition:K1_ii_b}, representative choices of the parameters $\lambda, q, a, b$ are summarized in Table~\ref{table:condition_necessary_parameters}.
\end{proof}

\begin{table}[htbp]
    \caption{Choices of positive parameters $q, a, b, \lambda$ for the system \eqref{condition:constant-parameter}} \label{table:condition_necessary_parameters}
    \begin{tabular*}{\textwidth}{@{\extracolsep\fill}lllll}
    \toprule
    $\delta$ & $q$ & $a$ & $b$ & $\lambda$ \\
    \midrule
    $]0,2[$& $]0,1]$& $\left]\dfrac{q+1}{q^2+q+1}, \dfrac{1}{q}\right]$& $\left]\dfrac{4}{4 - \delta^2}; +\infty\right[$& $\left]\dfrac{1 + q}{2 + q} \delta, \dfrac{a}{1 + a(1 - q)} \delta \right[$\\[0.5cm]
    $]0,2[$& $]0,1]$& $\left]\dfrac{1}{q}, +\infty\right[$& $\left]\dfrac{4}{4 - \delta^2}; +\infty\right[$& $\left]\dfrac{1 + q}{2 + q} \delta, \delta \right[$\\[0.5cm]
    $]0,2[$& $]1,+\infty[$& $\left]\dfrac{1}{q-1}, +\infty\right[$& $\left]\dfrac{4}{4 - \delta^2}; +\infty\right[$& $\left]\dfrac{1 + q}{2 + q} \delta, \delta \right[$\\[0.5cm]
    $[2,+\infty[$ & $]0,1]$ & $\left]\widehat{a}, \dfrac{1}{q}\right]$ & $\left]\widehat{b}, +\infty\right[$ & $\left]\widehat{\lambda}, \dfrac{a}{1 + a(1 - q)} \delta \right[$ \\[0.5cm]
    $[2,+\infty[$ & $]0,1]$ & $\left]\dfrac{1}{q}, +\infty\right[$ & $\left]\widehat{b}, +\infty\right[$ & $\left]\widehat{\lambda}, \delta \right[$ \\[0.5cm]
    $[2,+\infty[$ & $]1,+\infty[$ & $\left]\dfrac{1}{q-1}, +\infty\right[$ & $]\delta^2,+\infty[$ & $\left]\widehat{\lambda}, \delta \right[$ \\
    \bottomrule
    \end{tabular*}
    \footnotetext{Here $\delta$ is given; $\displaystyle \widehat{a}:=\max\left\{\frac{q+1}{q^2+q+1},\dfrac{1+\sqrt{1-\dfrac{4}{\delta^2}}}{2-\left(1+\sqrt{1-\dfrac{4}{\delta^2}}\right)(1-q)}\right\}$; \\ $\displaystyle \widehat{b}:=\frac{(a\delta)^2}{\delta^2a(aq-1)+(1+a(1-q))^2}$;    $\displaystyle \widehat{\lambda}:= \max\left\{\dfrac{1 + q}{2 + q} \delta,\dfrac{b}{2(b-1)} \left(\delta + \sqrt{\delta^2 + \frac{4(1-b)}{b}} \right)\right\}$.}
\end{table}   

\begin{remark} \label{remark:choices_parameter}
Among the parameters in \eqref{condition:constant-parameter}, $q$ affects Algorithm~\ref{algo:triga} through the admissible step-size $s$. In particular, condition $(iv)$ in \eqref{condition:constant-parameter} imposes
$$0< s < \dfrac{1}{L\left(1 + q^{-1}\right)}.$$
Thus, larger $q$ yields a larger upper bound for $s$. The choice of $q>0$ is otherwise free: by Proposition~\ref{proposition:constant-parameter} and Table~\ref{table:condition_necessary_parameters}, one can select the remaining parameters for both regimes $0<q\le 1$ and $q>1$. In the numerical section, we evaluate the performance of \ref{algo:triga} under two difference choices of $q$ for both cases.
\end{remark}

\section{Convergence rates for particular cases} \label{sec:particular_case}

\subsection{\texorpdfstring{$\varepsilon_k$}{TEXT} of order \texorpdfstring{$\frac{1}{k^p}, 0<p<2$}{TEXT}}

Attouch et al. \cite{Attouch:jde:22} studied the convergence rate of the values and the strong convergence of the trajectories of the dynamical system \eqref{trigs} when $\varepsilon(t) = \frac{1}{t^p}$ where $0<p<2$. In the case of our associated temporal discretization algorithm \ref{algo:triga}, we investigate the convergence rate of the values and the convergence rate to zero of $\|x_k - x_{\varepsilon_k}\|$ when $\varepsilon_k := \frac{1}{k^p}$ where $0<p<2$, which are stated in Theorem \ref{theorem:conv_rate_grad_algo}.
\begin{theorem} \label{theorem:conv_rate_grad_algo}
    Take $\varepsilon_k := \dfrac{1}{k^p}$, $0<p<2$. Suppose $f \colon \mathcal{H} \to \mathbb{R}$ is a convex differentiable function whose gradient $\nabla f$ is $L$-Lipschitz continuous. Let $(x_k)_k$ be a sequence generated by Algorithm \ref{algo:triga}. Then, we have the following convergence rates
    \def\labelenumi{\rm (\roman{enumi})}
    \def\theenumi{\roman{enumi}}
    \begin{enumerate}
        \item $f(x_k) - \underset{\mathcal{H}}{\min} f = \mathcal{O}\left(\dfrac{1}{k^p}\right)$ as $k \to +\infty$.
        \item $\|x_k - x_{\varepsilon_k}\|^2 = \mathcal{O} \left( \dfrac{1}{k^{\frac{2-p}{2}}} \right)$ as $k \to +\infty$. In particular, the sequence of iterates $(x_k)_k$ converges strongly to the minimum-norm solution $x^*$.
        \item $\|x_k - x_{k-1}\| = \mathcal{O}\left( \dfrac{1}{k^\frac{2+p}{4}} \right)$ as $k \to +\infty$.
    \end{enumerate}
\end{theorem}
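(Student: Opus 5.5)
The plan is to apply Theorem~\ref{theorem:grad-algo} to $\varepsilon_k = k^{-p}$ and then read off the three rates through Lemma~\ref{lemma:convergence_rate_grad_algo}.

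\textbf{Step 1: verify \eqref{condition:tikhonov-sequence} for large $k$.} Fix $s,q,a,b,\lambda$ as in Table~\ref{table:condition_necessary_parameters}, so that \eqref{condition:constant-parameter} holds with strict inequalities. With $\varepsilon_k=k^{-p}$,
\[
\frac{1}{\sqrt{s\varepsilon_{k+1}}}-\frac{1}{\sqrt{s\varepsilon_k}}=\frac{(k+1)^{p/2}-k^{p/2}}{\sqrt s}=\mathcal{O}\big(k^{p/2-1}\big)\to 0
\]
because $p<2$. Moreover $\sqrt{\varepsilon_{k+1}/\varepsilon_k}\to1$, $\sqrt{s\varepsilon_k}\to0$ and $\delta/(1-\delta\sqrt{s\varepsilon_k})\to\delta$. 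Hence each left-hand side in \eqref{condition:tikhonov-sequence} converges to the corresponding strictly negative quantity in \eqref{condition:constant-parameter}, and there is a $k_0$ beyond which all four conditions hold. This is the only place where $p<2$ enters in an essential way: for $p=2$ the difference above tends to the positive constant $1/\sqrt s$, which consumes part of the slack.

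\textbf{Step 2: asymptotics of $\mu_k$, $\gamma_k$, $\theta_k$.} We have
\[
\mu_{k+1}=\Big(\frac{k+1}{k}\Big)^{p/2}-1+\Big(\frac{\delta}{1-\delta\sqrt{s\varepsilon_k}}-\lambda\Big)\sqrt s\,k^{-p/2}\ \ge\ c\,k^{-p/2},
\qquad c:=(\delta-\lambda)\sqrt s>0.
\]
Therefore
\[
\gamma_k\ \ge\ \exp\Big(\sum_{j=k_0}^{k}\log(1+c\,j^{-p/2})\Big)\ \ge\ C\exp\big(c'k^{1-p/2}\big),
\]
which grows like a stretched exponential since $1-p/2>0$. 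Writing $\dot\varepsilon_k=\varepsilon_{k+1}-\varepsilon_k$ as in the paper, we have $|\dot\varepsilon_k|\sim p\,k^{-p-1}$ and $\dot\varepsilon_{k-1}^2/\varepsilon_{k-1}^2\sim p^2k^{-2}$. This gives
\[
\theta_k=\mathcal{O}\big(k^{-p/2}k^{-2}+k^{-p-1}\big)=\mathcal{O}\big(k^{-p-1}\big),
\]
since $p/2+2>p+1$ exactly when $p<2$.

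\textbf{Step 3: the main obstacle, bounding $\frac{1}{\gamma_{k+1}}\sum_{j\le k}\gamma_j\theta_j$.} This is the step I expect to be hardest. First, $\gamma_{k_0}E_{k_0}/\gamma_{k+1}$ decays faster than any power, so it is negligible. For the sum, I would use a discrete Gr\"onwall-type comparison: because $\gamma_{j+1}-\gamma_j=\mu_{j+1}\gamma_j$,
\[
\sum_{j\le k}\gamma_j\theta_j=\sum_{j\le k}\frac{\theta_j}{\mu_{j+1}}\,(\gamma_{j+1}-\gamma_j).
\]
The ratio $\theta_j/\mu_{j+1}=\mathcal{O}(j^{-p/2-1})$ is eventually nonincreasing up to constants. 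An Abel summation (or splitting the sum at $k/2$, where the early part is killed by the stretched-exponential ratio $\gamma_{k/2}/\gamma_{k+1}$) then yields
\[
\frac{1}{\gamma_{k+1}}\sum_{j\le k}\gamma_j\theta_j=\mathcal{O}\Big(\frac{\theta_k}{\mu_{k+1}}\Big)=\mathcal{O}\big(k^{-1-p/2}\big).
\]
Hence $E_k=\mathcal{O}(k^{-1-p/2})$.

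\textbf{Step 4: conclude.} Note $\alpha_k\to(1+q)s$ and $\beta_k\to q$, so both are bounded above and away from zero. Lemma~\ref{lemma:convergence_rate_grad_algo}(i) gives
\[
f(x_k)-\min_{\mathcal{H}} f=\mathcal{O}\big(k^{-1-p/2}\big)+\mathcal{O}\big(k^{-p}\big)=\mathcal{O}\big(k^{-p}\big),
\]
using $1+p/2>p$. Lemma~\ref{lemma:convergence_rate_grad_algo}(ii) gives
\[
\|x_k-x_{\varepsilon_k}\|^2=\mathcal{O}\big(k^{-1-p/2}\,k^{p}\big)=\mathcal{O}\big(k^{-(2-p)/2}\big)\to0 .
\]
Combined with Lemma~\ref{lemma:visco-curve}(ii), this yields $x_k\to x^*$ strongly. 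Finally, Lemma~\ref{lemma:convergence_rate_grad_algo}(iii) gives
\[
\|x_k-x_{k-1}\|=\mathcal{O}\big(\sqrt{E_k}\big)=\mathcal{O}\big(k^{-(2+p)/4}\big).
\]
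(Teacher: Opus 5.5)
Your proposal is correct. It follows the paper's overall architecture: bound $E_k$ through Theorem~\ref{theorem:grad-algo}, show $E_k=\mathcal{O}(k^{-(2+p)/2})$, then read off (i)--(iii) from Lemma~\ref{lemma:convergence_rate_grad_algo}. It departs from the paper in two places.

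First, your Step~1 checks that $\varepsilon_k=k^{-p}$ satisfies \eqref{condition:tikhonov-sequence} for large $k$ once the constants satisfy \eqref{condition:constant-parameter} strictly, using $(k+1)^{p/2}-k^{p/2}\to 0$. The paper's proof invokes Theorem~\ref{theorem:grad-algo} without this verification, so you are supplying a hypothesis it leaves implicit.

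Second, the routes differ for the key estimate on $\gamma_{k+1}^{-1}\sum_{j\le k}\gamma_j\theta_j$. The paper applies Theorem~\ref{theorem:stolz-cesaro} to $a_k=\sum_{j\le k}\theta_j\gamma_j$ and $b_k=\gamma_{k+1}/(k+1)^{(2+p)/2}$. This forces it to prove that $b_k$ is eventually strictly increasing and to compute the exact limits $\theta_k k^{p+1}\to(1+q)ps$ and $k^{p+1}(b_k-b_{k-1})/\gamma_k\to(\delta-\lambda)\sqrt s$. In return it obtains the explicit constant $(1+q)p\sqrt s/(\delta-\lambda)$.

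Your rewriting $\gamma_j\theta_j=(\theta_j/\mu_{j+1})(\gamma_{j+1}-\gamma_j)$, followed by a split at $m=\lfloor k/2\rfloor$, needs only one-sided bounds:
\begin{itemize}
\item The tail is at most $\max_{j\ge m}(\theta_j/\mu_{j+1})\,\gamma_{k+1}=\mathcal{O}(k^{-1-p/2})\,\gamma_{k+1}$.
\item The head is at most $\sup_j(\theta_j/\mu_{j+1})\,\gamma_m$. Since $\mu_j\ge c\,j^{-p/2}$, the ratio $\gamma_m/\gamma_{k+1}\le\exp(-c''k^{1-p/2})$ for large $k$, which is negligible against any power of $k$.
\end{itemize}
This argument is more robust than the paper's: it uses no monotonicity of $b_k$ and no exact limits. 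Its price is the lack of an explicit constant, which the theorem does not need. The split also does not require $\theta_j/\mu_{j+1}$ to be eventually nonincreasing, so you can drop that remark and the Abel-summation alternative.

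One small correction to a side comment: Step~1 is not the only place where $p<2$ is essential. You also need it for the stretched-exponential growth of $\gamma_k$ that kills the head in Step~3. You need it again for $k^{p/2-1}\to0$ in the strong-convergence part of Step~4.
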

\begin{proof}
    From Theorem \ref{theorem:grad-algo}, we recall 
    \begin{align*}
        E_{k+1} \leq \dfrac{\gamma_{k_0}}{\gamma_{k+1}}E_{k_0} + \dfrac{1}{2\gamma_{k+1}} \left( \sum_{j = k_0}^k \gamma_{j} \theta_j \right) \|x^*\|^2.
    \end{align*}
    To obtain the convergence rate of $E_{k+1}$, we study the asymptotic behaviors of the following terms
    $$ E_{k,1} = \dfrac{\gamma_{k_0}}{\gamma_{k+1}}E_{k_0}, \qquad E_{k,2} = \dfrac{1}{2\gamma_{k+1}} \left( \sum_{j = k_0}^k \gamma_{j} \theta_j \right).$$
    We now give a lower bound for $\gamma_k$. By the notation of Theorem \ref{theorem:grad-algo}, we have
    \begin{align*}
        \gamma_{k} &= \prod_{j=k_0}^k \left( \left( \dfrac{j}{j-1} \right)^{\frac{p}{2}} + \left( \dfrac{\delta}{1 - \frac{\delta \sqrt{s}}{(j-1)^{p/2}}} - \lambda \right) \dfrac{\sqrt{s}}{(j-1)^\frac{p}{2}} \right) \\
        & \geq \prod_{j=k_0}^k \left( \left( \dfrac{j}{j-1} \right)^{\frac{p}{2}} + \dfrac{(\delta - \lambda)\sqrt{s}}{(j-1)^\frac{p}{2}} \right) = \prod_{j=k_0}^k \dfrac{j^{\frac{p}{2}}-(\delta - \lambda)\sqrt{s}}{(j-1)^\frac{p}{2}}.
    \end{align*}
    Taking the logarithm of both sides, we obtain
    \begin{eqnarray*}
        && \log \gamma_k \geq \sum_{j=k_0}^k \log \dfrac{j^{\frac{p}{2}}-(\delta - \lambda)\sqrt{s}}{(j-1)^\frac{p}{2}} = \sum_{j=k_0}^k \left[\log \left( j^\frac{p}{2} + (\delta -\lambda) \sqrt{s} \right) - \log (j-1)^\frac{p}{2} \right]\\
        & =& \sum_{j=k_0}^k \left[ \log \left( 1 + \dfrac{(\delta -\lambda) \sqrt{s}}{j^\frac{p}{2}} \right)  + \log j^\frac{p}{2}  - \log (j-1)^\frac{p}{2}  \right] \\
        &=&
        \log k^\frac{p}{2} - \log (k_0 - 1)^\frac{p}{2} + \sum_{j=k_0}^k  \log \left( 1 + \dfrac{(\delta -\lambda) \sqrt{s}}{j^\frac{p}{2}} \right).
    \end{eqnarray*}
     Using the inequalities $\log(1+x) \geq \dfrac{x}{1+ x} \geq \dfrac{x}{2}$ for all $x \in [0, 1]$ and the fact that, from \eqref{condition:positive_1minusvarepsilon} and $\lambda < \delta$, the term $\frac{(\delta-\lambda)\sqrt{s}}{j^{p/2}} = (\delta-\lambda)\sqrt{s\varepsilon_j} \in [0, 1]$ for all $j \ge k_0$, we arrive at the following estimate
    \begin{align*}
        \log \left( 1 + \dfrac{(\delta -\lambda) \sqrt{s}}{j^\frac{p}{2}} \right) \geq \dfrac{\frac{(\delta -\lambda) \sqrt{s}}{j^\frac{p}{2}}}{1 + \frac{(\delta -\lambda) \sqrt{s}}{j^\frac{p}{2}}} \geq \dfrac{(\delta -\lambda) \sqrt{s}}{2 j^\frac{p}{2}}.
    \end{align*}
    Taking the sum for $j$ from $k_0$ to $k$, we obtain
    \begin{align*}
        \sum_{j=k_0}^k  \log \left( 1 + \dfrac{(\delta -\lambda) \sqrt{s}}{j^\frac{p}{2}} \right) \geq \dfrac{(\delta -\lambda) \sqrt{s}}{2 }\sum_{j=k_0}^k \dfrac{1}{j^\frac{p}{2}}.
    \end{align*}
    Applying Lemma \ref{lemma:sum-integral-inequality} with the function $g(t) = \dfrac{1}{t^\frac{p}{2}}$, we yield
    \begin{align*}
        \sum_{j=k_0}^k  \log \left( 1 + \dfrac{(\delta -\lambda) \sqrt{s}}{j^\frac{p}{2}} \right) & \geq \dfrac{(\delta -\lambda) \sqrt{s}}{2 } \int_{k_0}^{k+1} \dfrac{1}{t^\frac{p}{2}} dt = \dfrac{(\delta - \lambda)\sqrt{s}}{2-p} \left( (k+1)^\frac{2-p}{2} - k_0^\frac{2-p}{2} \right).
    \end{align*}
    Then,
    \begin{align*}
        \log \gamma_k \geq \log k^\frac{p}{2} - \log (k_0 - 1)^\frac{p}{2} + \dfrac{(\delta - \lambda)\sqrt{s}}{2-p} \left( (k+1)^\frac{2-p}{2} - k_0^\frac{2-p}{2} \right).
    \end{align*}
    Taking the exponent of both sides, we obtain
    \begin{align} \label{upper-bound-gamma}
        \gamma_k \geq C k^\frac{p}{2} \mathrm{exp} \left( \dfrac{(\delta - \lambda)\sqrt{s}}{2-p} (k+1)^\frac{2-p}{2}\right)
    \end{align}
    where $C = \left[(k_0 - 1)^\frac{p}{2} \mathrm{exp}\left( \dfrac{(\delta - \lambda)\sqrt{s}}{2-p} k_0^\frac{2-p}{2} \right)\right]^{-1}$. Then we have
    \begin{align*}
        E_{k,1} = \dfrac{\gamma_{k_0} E_{k_0}}{\gamma_{k+1}} = \mathcal{O} \left( (k+1)^{-\frac{p}{2}} \mathrm{exp} \left( - \dfrac{(\delta - \lambda)\sqrt{s}}{2-p} (k+2)^\frac{2-p}{2}\right) \right).
    \end{align*}
    
    We continue to study the behavior of $E_{k,2}$ by proving the existence of limit 
    \begin{equation*} 
        \lim\limits_{k \to +\infty} \frac{E_{k,2}}{(k+1)^{\frac{2+p}{2}}}=\lim\limits_{k \to +\infty} \frac{\frac{\sum_{j=k_0}^k \theta_j \gamma_j}{\gamma_{k+1}}}{(k+1)^{\frac{2+p}{2}}} = \lim\limits_{k\to + \infty} \frac{\sum_{j=k_0}^k  \theta_j \gamma_j}{\frac{\gamma_{k+1}}{(k+1)^\frac{2+p}{2}}}
    \end{equation*}
    with the help of Theorem \ref{theorem:stolz-cesaro} where the sequence $(a_k)_k$ and $(b_k)_k$ are defined as follows:
    \begin{equation*}
        a_k := \sum_{j=k_0}^k\theta_j \gamma_j \textrm{ and } b_k := \frac{\gamma_{k+1}}{(k+1)^\frac{2+p}{2}}.
    \end{equation*}
    
    First, we verify that the sequence $\left(b_k\right)_{k}$ is strictly increasing and unbounded from some index $N$. Indeed, it follows from \eqref{upper-bound-gamma} that $b_k$ is unbounded. To check the monotonicity, we consider the following difference
    \begin{eqnarray*}
        &&b_{k}-b_{k-1} = \dfrac{\gamma_{k+1}}{(k+1)^\frac{2+p}{2}} - \dfrac{\gamma_k}{k^\frac{2+p}{2}} = \dfrac{\gamma_{k}}{(k+1)^\frac{2+p}{2}} \left(1 + \mu_{k+1}  - \left( \dfrac{k+1}{k} \right)^\frac{2+p}{2} \right) \\
        &=& \dfrac{\gamma_{k}}{(k+1)^\frac{2+p}{2}} \left( \left( \dfrac{k+1}{k} \right)^\frac{p}{2} + \left( \dfrac{\delta}{1 - \frac{\delta\sqrt{s}}{k^\frac{p}{2}}} - \lambda \right)\dfrac{\sqrt{s}}{k^\frac{p}{2}}  - \left( \dfrac{k+1}{k} \right)^\frac{2+p}{2} \right) \\
        &=& \dfrac{\gamma_{k}}{(k+1)^\frac{2+p}{2}} \left( -\dfrac{1}{k}\left( \dfrac{k+1}{k} \right)^\frac{p}{2} + \left( \dfrac{\delta}{1 - \frac{\delta\sqrt{s}}{k^\frac{p}{2}}} - \lambda \right)\dfrac{\sqrt{s}}{k^\frac{p}{2}} \right) \\
        & > & \dfrac{\gamma_{k}}{(k+1)^\frac{2+p}{2}k^\frac{p}{2}} \left( -\dfrac{(k+1)^\frac{p}{2}}{k} + \left(\delta - \lambda \right)\sqrt{s}\right).
    \end{eqnarray*}
    Since $0<p<2$, we have $\lim\limits_{k \to+\infty}  \dfrac{(k+1)^\frac{p}{2}}{k} = 0$. Therefore, there exists an integer $N \geq k_0 + 1$ such that $(\delta - \lambda)\sqrt{s} > \dfrac{(k+1)^\frac{p}{2}}{k}, \forall k \geq N$, or equivalently, $b_k > b_{k-1}, \forall k \geq N$. Then the sequence $\left(b_k\right)_{k}$ is strictly increasing for all $k \geq N$. \\
    Next, we calculate the limit
    \begin{equation*}
        \lim\limits_{k \to +\infty} \dfrac{a_{k}-a_{k-1}}{b_{k}-b_{k-1}} = \lim\limits_{k \to +\infty} \dfrac{\theta_k \gamma_k}{\dfrac{\gamma_{k+1}}{(k+1)^{\frac{2+p}{2}}} - \dfrac{\gamma_{k}}{k^{\frac{2+p}{2}}}} = \lim\limits_{k \to +\infty} \dfrac{\theta_k k^{p+1}}{\left( \dfrac{1+ \mu_{k+1}}{(k+1)^\frac{2+p}{2}} - \dfrac{1}{k^\frac{2+p}{2}} \right)k^{p+1}}.
    \end{equation*}
    Similarly for the evaluation of $b_k-b_{k-1}$, the denominator becomes
    \begin{eqnarray*}
        &&\left( \dfrac{1 + \mu_{k+1}}{(k+1)^\frac{2+p}{2}} - \dfrac{1}{k^\frac{2+p}{2}} \right)k^{p+1}  = \dfrac{k^{p+1}}{(k+1)^\frac{2+p}{2}k^\frac{p}{2}} \left( -\dfrac{(k+1)^\frac{p}{2}}{k} + \left(\dfrac{\delta}{1 - \frac{\delta\sqrt{s}}{k^\frac{p}{2}}} - \lambda \right)\sqrt{s}\right)\\
        &=& \left( \dfrac{k}{k+1} \right)^\frac{2+p}{2} \left( -\dfrac{(k+1)^\frac{p}{2}}{k} + \left(\dfrac{\delta}{1 - \frac{\delta\sqrt{s}}{k^\frac{p}{2}}} - \lambda \right)\sqrt{s}\right) \\
        &=& - \dfrac{k^\frac{p}{2}}{k+1} + \left( \dfrac{\delta}{1 - \frac{\delta\sqrt{s}}{k^\frac{p}{2}}} -  \lambda \right) \sqrt{s} \left( \dfrac{k}{k+1} \right)^\frac{2+p}{2} \longrightarrow (\delta - \lambda)\sqrt{s}, \textrm{as } k \longrightarrow +\infty.
    \end{eqnarray*}
    Now we are going to obtain the limit of the numerator. 
    \begin{align*}
        & \lim\limits_{k \to +\infty} \theta_k k^{p+1} \\
        & = \lim\limits_{k \to+\infty} \left[ (a+b) \lambda \sqrt{s} \dfrac{(\dot{\varepsilon}_{k-1})^2}{k^\frac{p}{2}} (k-1)^{2p} k^{p+1} - \dfrac{(1+q)s \dot{\varepsilon}_{k} k^{p+1}}{(1 - \delta \sqrt{s \varepsilon_{k}})(1 -s\varepsilon_{k})}(1 + \mu_{k+1}) \right]\\
        &= \lim_{k \to +\infty} \left[ (a+b)\lambda \sqrt{s} \left( \dot{\varepsilon}_{k-1} k^{p+1} \right)^2 \left(1-\dfrac{1}{k}\right)^{2p}k^\frac{p-2}{2} -\dfrac{(1+q)s \dot{\varepsilon}_{k} k^{p+1}}{(1 - \delta \sqrt{s \varepsilon_{k}})(1 -s\varepsilon_{k})}(1 + \mu_{k+1})\right] \\
        & = (1+q)ps.
    \end{align*}
Here 
\begin{eqnarray*}
    &&\lim\limits_{k \to +\infty} (1 + \mu_{k+1}) = \lim\limits_{k \to +\infty} \left[ \left( \dfrac{k+1}{k} \right)^\frac{p}{2}  + \left( \dfrac{\delta}{1 - \delta\sqrt{s \varepsilon_k}} - \lambda \right)\sqrt{s \varepsilon_k} \right]= 1, \\
    && \lim\limits_{k \to+\infty} \dot{\varepsilon}_{k-1} k^{p+1} = \lim\limits_{k \to+\infty} \left(\dfrac{1}{k^p} - \dfrac{1}{(k-1)^{p}}\right) k^{p+1} = \lim\limits_{k \to+\infty} \left(1-\left(\frac{k}{k-1}\right)^p\right)k\\
    &=& \lim\limits_{k \to+\infty} \dfrac{1-\left(1-\frac{1}{k}\right)^{-p}}{\frac{1}{k}}= \lim\limits_{k \to+\infty} -p\left(1-\frac{1}{k}\right)^{-p-1} = -p ~\textrm{(using L'Hôpital's rule)},\\
    && \lim\limits_{k \to+\infty} \dot{\varepsilon}_{k} k^{p+1} = \lim\limits_{k \to+\infty} \left(\dfrac{1}{{k+1}^p} - \dfrac{1}{k^{p}}\right) k^{p+1} = \lim\limits_{k \to+\infty} \left(\left(\frac{k}{k+1}\right)^p-1\right)k\\
    &=& \lim\limits_{k \to+\infty} \dfrac{\left(1+\frac{1}{k}\right)^{-p}-1}{\frac{1}{k}}= \lim\limits_{k \to+\infty} -p\left(1+\frac{1}{k}\right)^{-p-1} = -p.\\
\end{eqnarray*}
Consequently, the limit $\lim\limits_{k \to +\infty} \dfrac{a_{k}-a_{k-1}}{b_{k}-b_{k-1}} = \dfrac{(1+q)p\sqrt{s}}{\delta - \lambda}.$ \\
Finally, according to Theorem \ref{theorem:stolz-cesaro}, we get
    \begin{align*}
        \lim\limits_{k \to +\infty} \dfrac{a_{k}}{b_{k}} = \lim\limits_{k\to + \infty} \dfrac{\sum_{j=k_0}^k  \theta_j \gamma_j}{\frac{\gamma_{k+1}}{(k+1)^\frac{2+p}{2}}} = \dfrac{(1+q)p\sqrt{s}}{\delta - \lambda}.
    \end{align*}
    Then, by Lemma \ref{lemma:big-o}, we obtain $\dfrac{\sum_{j=k_0}^k \theta_j \gamma_j}{\gamma_{k+1}} = \mathcal{O}\left( \dfrac{1}{k^\frac{2+p}{2}}\right)$ which means that $E_{k,2} = \mathcal{O}\left( \dfrac{1}{k^\frac{2+p}{2}}\right)$. Since the first term $E_{k,1}$ decays to zero exponentially, we deduce that 
    \begin{align*}
        E_k = \mathcal{O}\left( \dfrac{1}{k^\frac{2+p}{2}} \right) \text{ as } k \to +\infty.
    \end{align*}

      \def\labelenumi{\rm (\roman{enumi})}
\def\theenumi{\roman{enumi}}
    \begin{enumerate}
        \item \textbf{Convergence rate of the values:} Thanks to Lemma \ref{lemma:convergence_rate_grad_algo}.(i), we have
        \begin{align*}
            f(x_k) - \underset{\mathcal{H}}{\min} f \leq \dfrac{E_k}{\alpha_k} + \dfrac{1}{2k^p}\|x^*\|^2.
        \end{align*}
        Since $\alpha_k = \dfrac{(1 + q)s}{(1 - \delta \sqrt{s \varepsilon_k})(1 - s\varepsilon_k)} > s$ for all $k \geq 0$, we obtain
        \begin{align*}
            \dfrac{E_k}{\alpha_k} < \dfrac{E_k}{s} = \mathcal{O} \left( \dfrac{1}{k^\frac{2+p}{2}} \right).
        \end{align*}
        Since $0<p<2$, we have $p < \dfrac{2+p}{2}$. We conclude that
        \begin{align*}
            f(x_k) - \underset{\mathcal{H}}{\min} f = \mathcal{O}\left( \dfrac{1}{k^p} \right) \text{ as } k \to +\infty.
        \end{align*}

        \item \textbf{Convergence rate to zero of $\|x_k - x_{\varepsilon_k}\|$}: Using Lemma \ref{lemma:convergence_rate_grad_algo}.(i), we have 
        \begin{align*}
            \|x_k - x_{\varepsilon_k}\|^2 \leq \dfrac{2E_k}{\alpha_k \varepsilon_k} < \dfrac{2k^p E_k}{s}.
        \end{align*}
        Since $E_k = \mathcal{O}\left( \dfrac{1}{k^\frac{2+p}{2}} \right)$, we obtain
        \begin{align*}
            \|x_k - x_{\varepsilon_k}\|^2 = \mathcal{O} \left( \dfrac{1}{k^\frac{2-p}{2}} \right).
        \end{align*}
        Since $\lim\limits_{k \to +\infty} \|x_{\varepsilon_k} - x^*\| = 0$ (see Lemma~\ref{lemma:visco-curve}(ii)), we deduce that
        \begin{align*}
            0 \leq \lim\limits_{k \to +\infty} \|x_k - x^*\| \leq \lim\limits_{k \to +\infty} ( \|x_k - x_{\varepsilon_k}\| + \|x_{\varepsilon_k} - x^*\| ) = 0.
        \end{align*}
        Then $(x_k)_k$ strongly converges to the minimum-norm solution $x^*$.

        \item \textbf{Convergence rate to zero of the velocity $\|x_k - x_{k-1}\|$}: We get from Lemma \ref{lemma:convergence_rate_grad_algo}.(ii)
        \begin{align*}
            \|x_k - x_{k-1}\|^2 \leq \dfrac{2E_k}{\beta_k}.
        \end{align*}
        By the definition of $\beta_k = q ( 1- \delta \sqrt{s \varepsilon_k})$, there exists $N \in \mathbb{N}$ such that $\beta_k \geq \dfrac{q}{2}$ for all $k \geq N$. Therefore, for all $k \geq N$,
        \begin{align*}
            \|x_k - x_{k-1}\|^2 \leq \dfrac{4E_k}{q} = \mathcal{O}\left( \dfrac{1}{k^\frac{2+p}{2}} \right).
        \end{align*}
        We deduce that $\|x_k - x_{k-1}\| = \mathcal{O}\left( \dfrac{1}{k^\frac{2+p}{4}} \right)$ as $k \to +\infty$. This completes the proof.
    \end{enumerate}
\end{proof}

\begin{remark}~ \label{remark:particular_case_plessthan2}
\def\labelenumi{\rm (\roman{enumi})}
\def\theenumi{\roman{enumi}}
\begin{enumerate}
    \item The rates in Theorem~\ref{theorem:conv_rate_grad_algo} for our discretization are consistent with the convergence results for the continuous dynamics \eqref{trigs} proved in \cite[Theorem~5]{Attouch:jde:22}.

    \item The rates of Algorithm~\ref{algo:triga} are comparable to those obtained for Algorithm~\ref{algo:nadtr} in \cite[Theorem~9]{Karapetyants:amo:24}. In contrast to Algorithm~\ref{algo:nadtr}, our scheme guarantees strong convergence of $(x_k)_k$ while using only one Tikhonov regularization term. This may reduce both computational time and the number of iterations, as illustrated in Section~\ref{sec:numerical}.

    \item We obtain strong convergence of $(x_k)_k$ to the minimum-norm solution $x^*$. However, we only derive that $\|x_k-x_{\varepsilon_k}\|\to 0$. To obtain an explicit rate for $\|x_k-x^*\|$, one needs quantitative information on the convergence of the discrete viscosity curve $(x_{\varepsilon_k})_k$ toward $x^*$.

    \item As in the continuous setting, the choice $p=\frac{2}{3}$ (i.e., $\frac{p}{2}=\frac{2+p}{4}$) provides a balanced trade-off between the decay of the values $f(x_k)-\underset{\mathcal{H}}{\min} f$ and the strong convergence of $(x_k)_k$. Numerical results in Section~\ref{sec:numerical} further inform practical choices of $p$.

    \item In the proof of Theorem~\ref{theorem:conv_rate_grad_algo}, we use $(\delta-\lambda)\sqrt{s}>\frac{(k+1)^{p/2}}{k}$ for $k\ge N\ge 1$, which implies $\delta>\frac{(k+1)^{p/2}}{k\sqrt{s}}$ for $k\ge N\ge 1$. Since $\left(\frac{(k+1)^{p/2}}{k\sqrt{s}}\right)_k$ is strictly decreasing, one may take in the numerical experiments
    \[
        \delta:=\frac{(1+1)^{p/2}}{1\sqrt{s}}=\frac{2^{p/2}}{\sqrt{s}}.
    \]
\end{enumerate}
\end{remark}

\subsection{\texorpdfstring{$\varepsilon_k$}{TEXT} of order \texorpdfstring{$\dfrac{1}{k^2}$}{TEXT}}

Let $\varepsilon_k := \dfrac{c}{k^2}$. We choose $c$ so that \eqref{condition:tikhonov-sequence} holds. Fix $a,b,q,\lambda,s$ satisfying \eqref{condition:constant-parameter}. Then, for any $c>0$, conditions \eqref{condition:tikhonov-sequence}$(ii)$ and \eqref{condition:tikhonov-sequence}$(iv)$ hold for all sufficiently large $k$.

Condition \eqref{condition:tikhonov-sequence}$(i)$ reads
\begin{eqnarray}
    && (1+q) \left( \dfrac{k+1}{\sqrt{sc}} - \dfrac{k}{\sqrt{sc}} + \left( \dfrac{\delta}{1 - \frac{\delta\sqrt{sc}}{k}} - \lambda \right) \right) - \lambda \dfrac{k}{k+1} \leq 0 \notag \\
    &\Longleftrightarrow& \dfrac{1}{\sqrt{sc}} \leq \left( 1 + \dfrac{k}{(1+q)(k+1)} \right) \lambda - \dfrac{\delta}{1 - \frac{\delta\sqrt{sc}}{k}}. \label{ineq:bound_sc}
\end{eqnarray}
Since
$\lim\limits_{k \to +\infty} \left( \left( 1 + \dfrac{k}{(1+q)(k+1)} \right) \lambda - \dfrac{\delta}{1 - \frac{\delta\sqrt{sc}}{k}} \right) \lambda - \delta  =  \dfrac{2 + q}{1 + q} \lambda - \delta > 0$ (see \eqref{condition:constant-parameter}$(i)$), it suffices to impose
\begin{align*}
    \dfrac{1}{\sqrt{sc}} < \dfrac{2 + q}{1 + q} \lambda - \delta.
\end{align*}
Then there exists $N\in\mathbb{N}$ such that \eqref{ineq:bound_sc} holds for all $k\ge N$.

For \eqref{condition:tikhonov-sequence}$(iii)$, we obtain
\begin{align*}
    & \dfrac{(1 + a(1-q)) \lambda}{a} - \delta+ (1+q) \left( \dfrac{k+1}{\sqrt{sc}} - \dfrac{k}{\sqrt{sc}} \right) + \dfrac{q\delta^2 \frac{\sqrt{sc}}{k+1}}{1 - \frac{\delta \sqrt{sc}}{k}} \leq 0 \\
    \Longleftrightarrow & 
    \dfrac{(1 + a(1-q)) \lambda}{a} - \delta + \dfrac{1+q}{\sqrt{sc}} + \dfrac{q\delta^2 \sqrt{sc}}{(k+1)(1 - \frac{\delta \sqrt{sc}}{k})} \leq 0 \\
    \Longleftrightarrow & \dfrac{1+q}{\sqrt{sc}} \leq \delta - \dfrac{(1 + a(1-q)) \lambda}{a}  - \dfrac{q\delta^2 \sqrt{sc}}{(k+1)(1 - \frac{\delta \sqrt{sc}}{k})}.
\end{align*}
Since $\lim_{k \to +\infty} \dfrac{q\delta^2 \sqrt{sc}}{(k+1)(1 - \frac{\delta \sqrt{sc}}{k})} = 0$, it suffices to choose $c$ such that
$$\dfrac{1+q}{\sqrt{sc}} < \delta - \dfrac{(1 + a(1-q)) \lambda}{a} \Leftrightarrow \dfrac{1}{\sqrt{sc}} < \dfrac{1}{1 + q} \left( \delta - \dfrac{(1 + a(1-q)) \lambda}{a} \right).$$

\begin{theorem} \label{theorem:conv_rate_grad_algo_k^2}
    Take $\varepsilon_k := \dfrac{c}{k^2}$ where $c$ satisfies
    \begin{align*}
        \dfrac{1}{\sqrt{sc}} < \min \left\{ \dfrac{2+q}{1+q} \lambda -\delta, \dfrac{1}{1 + q} \left( \delta - \dfrac{(1 + a(1-q)) \lambda}{a} \right), \delta - \lambda \right\}.
    \end{align*}
    Suppose $f \colon \mathcal{H} \to \mathbb{R}$ is a convex differentiable function whose gradient $\nabla f$ is $L$-Lipschitz continuous. Let $(x_k)_k$ be a sequence generated by Algorithm \ref{algo:triga}. We have the following convergence rate of the values and the discrete velocity:
    \begin{align*}
        f(x_k) - \underset{\mathcal{H}}{\min} f = \mathcal{O}\left( \dfrac{1}{k^2} \right), ~ \|x_k - x_{k-1}\| = \mathcal{O} \left( \dfrac{1}{k} \right).
    \end{align*}
\end{theorem}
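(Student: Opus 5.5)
The plan is to mirror the proof of Theorem~\ref{theorem:conv_rate_grad_algo}, now with $\varepsilon_k = c/k^2$. First, the discussion preceding the statement shows that the two conditions on $c$ give \eqref{condition:tikhonov-sequence}$(i)$ and $(iii)$ for all large $k$. Conditions $(ii)$ and $(iv)$ hold eventually for any $c>0$, because the parameters satisfy \eqref{condition:constant-parameter}. Hence Theorem~\ref{theorem:grad-algo} applies from some $k_0$, and we get
\[
E_{k+1} \le \frac{\gamma_{k_0}}{\gamma_{k+1}}E_{k_0} + \frac{1}{2\gamma_{k+1}}\Big(\sum_{j=k_0}^k \gamma_j\theta_j\Big)\|x^*\|^2 .
\]
The goal is to show $E_k=\mathcal{O}(1/k^2)$. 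Then Lemma~\ref{lemma:convergence_rate_grad_algo}(i) together with $\alpha_k> s$ gives $f(x_k)-\min f \le E_k/s + \frac{c}{2k^2}\|x^*\|^2 = \mathcal{O}(1/k^2)$. Also Lemma~\ref{lemma:convergence_rate_grad_algo}(iii) with $\beta_k\ge q/2$ for large $k$ gives $\|x_k-x_{k-1}\|=\mathcal{O}(1/k)$. Note that (ii) of that lemma only gives $\|x_k-x_{\varepsilon_k}\|^2=\mathcal{O}(1)$ here, which is why no strong convergence is claimed.

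\emph{Growth of $\gamma_k$.} Now $\sqrt{s\varepsilon_k}=\sqrt{sc}/k$, so
\[
1+\mu_{k+1} = \frac{k+1}{k} + \Big(\frac{\delta}{1-\delta\sqrt{sc}/k}-\lambda\Big)\frac{\sqrt{sc}}{k} \ge 1+\frac{1+(\delta-\lambda)\sqrt{sc}}{k}.
\]
Set $r:=1+(\delta-\lambda)\sqrt{sc}$. The hypothesis $1/\sqrt{sc}<\delta-\lambda$ gives exactly $(\delta-\lambda)\sqrt{sc}>1$, so $r>2$. Taking logarithms and using $\log(1+x)\ge x - x^2/2$ together with the sum--integral comparison (Lemma~\ref{lemma:sum-integral-inequality}), we get $\gamma_k \ge C k^{r}$ for some $C>0$. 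In particular the first term $E_{k,1}=\mathcal{O}(k^{-r})=o(k^{-2})$.

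\emph{The $\theta$-sum.} Here $\dot\varepsilon_k = c\big(\frac{1}{(k+1)^2}-\frac1{k^2}\big)\sim -2c/k^3$ and $(\dot\varepsilon_{k-1}/\varepsilon_{k-1})^2\sim 4/k^2$. Hence
\[
\theta_k = (a+b)\lambda\frac{\sqrt{sc}}{k}\cdot\frac{4}{k^2}(1+o(1)) + \frac{2c(1+q)s}{k^3}(1+o(1)),
\]
so $\theta_k k^3\to 4(a+b)\lambda\sqrt{sc}+2(1+q)sc=:\Theta>0$. Both pieces are now of the same order $k^{-3}$; this differs from the case $p<2$, where the first piece was negligible. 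I then apply Stolz--Ces\`aro (Theorem~\ref{theorem:stolz-cesaro}) with $a_k=\sum_{j=k_0}^k\theta_j\gamma_j$ and $b_k=\gamma_{k+1}/(k+1)^2$. The increments are
\[
b_k-b_{k-1}=\frac{\gamma_k}{(k+1)^2}\Big(1+\mu_{k+1}-\big(\tfrac{k+1}{k}\big)^2\Big),
\]
and the bracket equals $\frac{(\delta-\lambda)\sqrt{sc}-1}{k}+\mathcal{O}(k^{-2})$. This is positive for large $k$, precisely because $(\delta-\lambda)\sqrt{sc}>1$. So $b_k$ is eventually strictly increasing, and it is unbounded since $\gamma_k\gtrsim k^{r}$ with $r>2$. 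The ratio
\[
\frac{a_k-a_{k-1}}{b_k-b_{k-1}}=\frac{\theta_k\, k^3(1+o(1))}{(\delta-\lambda)\sqrt{sc}-1+o(1)}
\]
converges to $\Theta/((\delta-\lambda)\sqrt{sc}-1)$. Hence $a_k/b_k$ is bounded, i.e.\ $E_{k,2}=\mathcal{O}(1/k^2)$ by Lemma~\ref{lemma:big-o}, which gives $E_k=\mathcal{O}(1/k^2)$.

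\emph{Main obstacle.} I expect the delicate step to be this critical balance. Both $\mu_{k}$ and the decay $k^{-2}$ of $b_k$ are now of order $1/k$, so the sign of $b_k-b_{k-1}$ and the finiteness of the limit rest entirely on the strict inequality $(\delta-\lambda)\sqrt{sc}>1$. One must therefore keep the $\mathcal{O}(1/k)$ expansions of $(1+1/k)^2$ and of $\mu_{k+1}$ exact to first order, rather than dropping lower-order terms as in the $p<2$ case.
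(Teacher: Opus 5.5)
Your proposal is correct and follows essentially the same route as the paper. It uses a polynomial lower bound $\gamma_k \gtrsim k^{1+(\delta-\lambda)\sqrt{sc}}$, then Stolz--Ces\`aro with $a_k=\sum_{j=k_0}^k\theta_j\gamma_j$ and $b_k=\gamma_{k+1}/(k+1)^2$ (eventually increasing precisely because $(\delta-\lambda)\sqrt{sc}>1$), and finally Lemma~\ref{lemma:convergence_rate_grad_algo}(i),(iii). Your explicit computation of $\lim_k \theta_k k^3$ and of the limit of the increment ratio actually supplies the verification of Stolz--Ces\`aro's hypothesis (ii), which the paper only asserts.
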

\begin{proof}
    Similarly to the proof of Theorem \ref{theorem:conv_rate_grad_algo}, firstly, we estimate the lower bound of $\log\gamma_k$.
    From now on, we assume $k_0\ge 2$ (so that all denominators involving $(j-1)$ are well-defined), and this does not affect the asymptotic estimates.
    \begin{align*}
        \log \gamma_k &\geq \sum_{j=k_0}^k \log \left( 1 + \dfrac{1 + (\delta - \lambda)\sqrt{sc}}{j - 1} \right) \geq \sum_{j=k_0}^k \dfrac{\frac{1 + (\delta - \lambda)\sqrt{sc}}{j - 1}}{1 + \frac{1 + (\delta - \lambda)\sqrt{sc}}{j - 1}} = \sum_{j=k_0}^k \dfrac{1 + (\delta - \lambda)\sqrt{sc}}{j +  (\delta - \lambda)\sqrt{sc}}.
    \end{align*}
    Using Lemma \ref{lemma:sum-integral-inequality} for the function $g(t) = \dfrac{1}{t + (\delta - \lambda)\sqrt{sc}}$, we obtain
    \begin{align*}
        \log \gamma_k \geq (1 + (\delta - \lambda)\sqrt{sc}) \left( \log(k+1 + (\delta - \lambda)\sqrt{sc}) - \log(k_0 + (\delta - \lambda)\sqrt{sc})\right).
    \end{align*}
    By taking the exponent of both sides and recalling from the choice of $c$ that $(\delta - \lambda) \sqrt{sc} > 1$, it follows that
    \begin{align*}
        \gamma_k \geq C_1 (k+1 + (\delta - \lambda)\sqrt{sc})^{1 + (\delta - \lambda)\sqrt{sc}} > C_1 k^2
    \end{align*}
    where $C_1 := (k_0 + (\delta - \lambda)\sqrt{sc})^{-(1 + (\delta - \lambda)\sqrt{sc})}$. 
    Consequently, $$\dfrac{\gamma_{k_0}E_{k_0}}{\gamma_{k+1}} \leq \dfrac{\gamma_{k_0}E_{k_0}}{C_1(k+1)^2}= \mathcal{O}\left( \dfrac{1}{k^2} \right).$$   
    We now aim to show that the limit $\lim\limits_{k \to +\infty} \dfrac{\sum_{j=k_0}^k \theta_j \gamma_j}{\dfrac{\gamma_{k+1}}{(k+1)^2}}$ exists. Since $(\delta - \lambda)\sqrt{sc} > 1$, we have
    \begin{align*}
        \dfrac{\gamma_k}{k^2} \geq C_1 (1+1/k + (\delta - \lambda)\sqrt{sc}/k)^{1 + (\delta - \lambda)\sqrt{sc}}k^{(\delta - \lambda)\sqrt{sc}-1}  \to +\infty \text{ as } k \to +\infty.
    \end{align*}
    Next, we calculate the difference:
    \begin{align*}
        \dfrac{\gamma_{k+1}}{(k+1)^2} - \dfrac{\gamma_{k}}{k^2} > \dfrac{\gamma_{k}}{(k+1)^2k} \left(\left(\delta - \lambda \right)\sqrt{sc} -\dfrac{(k+1)}{k} \right) = \dfrac{\gamma_{k}}{(k+1)^2k} \left( \left(\delta - \lambda \right)\sqrt{sc} - 1 -\dfrac{1}{k}\right).
    \end{align*}
    There exists $N \in \mathbb{N}$ such that 
    \begin{align*}
        (\delta - \lambda) \sqrt{sc} - 1 \geq \dfrac{1}{k}, \forall k \geq N.
    \end{align*}
    Therefore, we deduce that the sequence $\left( \dfrac{\gamma_k}{k^2} \right)_k$ is strictly increasing for all $k \geq N$. \\
    Applying Theorem \ref{theorem:stolz-cesaro}, we establish that the limit $\lim\limits_{k \to +\infty} \dfrac{\sum_{j=k_0}^k \theta_j \gamma_j}{\dfrac{\gamma_{k+1}}{(k+1)^2}}$ exists. Combined with Lemma \ref{lemma:big-o}, this existence ensures the asymptotic bound
    \begin{align*}
        \dfrac{\sum_{j=k_0}^k \theta_j \gamma_j}{\gamma_{k+1}} = \mathcal{O} \left( \dfrac{1}{k^2} \right).
    \end{align*}
    We invoke Theorem \ref{theorem:grad-algo} to obtain
    \begin{align*}
        E_{k+1} \leq \dfrac{\gamma_{k_0}E_{k_0}}{\gamma_{k+1}} + \dfrac{\sum_{j=k_0}^k \gamma_j \theta_j}{2 \gamma_{k+1}} \|x^*\|^2 = \mathcal{O}\left( \dfrac{1}{k^2} \right).
    \end{align*}
    Consequently, we conclude that
    \begin{align*}
        f(x_k) - \underset{\mathcal{H}}{\min} f = \mathcal{O}\left( \dfrac{1}{k^2} \right), ~ \|x_k - x_{k-1}\| = \mathcal{O} \left( \dfrac{1}{k} \right).
    \end{align*}
\end{proof}

\begin{remark}
    In the case $\varepsilon_k = \dfrac{c}{k^2}$, the convergence rate of the values $f(x_k) - \underset{\mathcal{H}}{\min} f$ and the discrete velocity $\|x_k - x_{k-1}\|$ is as good as those obtained by Nesterov's accelerated gradient method. However, unlike the case with $p \in ]0,2[$, the strong convergence of the iterates toward the minimum-norm solution is not guaranteed. 
\end{remark}

\section{Numerical experiments} \label{sec:numerical}

Our numerical experiments aim to illustrate the performance of our proposed Algorithm \ref{algo:triga} and compare with Algorithm \ref{algo:nadtr} in the very-related work \cite{Karapetyants:amo:24}. Particularly, we carry out a series of experiments on three test problems: a simple quadratic problem from \cite{Attouch:jde:22}, the linear least-squares problem, and the logistic regression.  All computations are executed in MATLAB 2024b on a MacBook Air running macOS Sequoia $15.7$, equipped with an Apple M2 processor and 16GB of memory.

\bigskip

\noindent \textbf{Computational settings}: Stopping criteria of the comparable algorithms are defined as either reaching a maximum of $10^5$ iterations or attaining a gradient norm of the objective function $\|\nabla f(x_k)\|$ smaller than $10^{-6}$, unless stated otherwise. Since both algorithms \ref{algo:triga} and \ref{algo:nadtr} have the same step of updating $x_{k+1}$, for a fair comparison, we set the same choices of the step-size $s$. As discussed in Remark~\ref{remark:choices_parameter}, we choose here two values of $s$, namely $s = \frac{1}{1.1L}$ and $s = \frac{1}{2.1L}$, which correspond to $q > 10 > 1$ and $q < \frac{10}{11} < 1$, respectively. As for the Tikhonov regularization parameters in \ref{algo:nadtr}, we follow the set-up in \cite{Karapetyants:amo:24} as follows: $a=c=1$, $q = 0.99$, $p \in \{0.3, 0.6, 0.9, 1.2, 1.5, 1.95\}$, meanwhile for our algorithm, we choose $p \in \{0.3, 0.6, 0.9, 1.2, 1.5, 1.95, 1.99\}$. According to Remark~\ref{remark:particular_case_plessthan2}, we set $\delta := \frac{2^\frac{p}{2}}{\sqrt{s}}$.

\bigskip

\noindent \textbf{Comparison criteria}: We compare the performance of these algorithms using the performance profiles, developed by Dolan and Moré \cite{Dolan:mp:01}. For each value of $t \in \mathbb{R}$, the performance profile reports the proportion $\rho_s(t)$ of problems for which the performance ratio of the solver $s$ is not greater than the factor $t$. In particular, given a set of problems $P := \{1,\ldots,n_p\}$ and a set of solvers $S := \{1,\ldots,n_s\}$, the performance ratio of solver $s \in S$ for problem $p \in P$ is defined by
$$r_{p,s} := \dfrac{t_{p,s}}{\min\{ t_{p,s'} \colon s' \in S\}},$$
where $t_{p,s}$ is the performance measure of solver $s$ for problem $p$ (for example, CPU time or number of iterations). 
Then, the performance of solver $s \in S$ is given by
$$\rho_{s}(t) := \dfrac{1}{n_p}\mathrm{size} \{p \in P \colon \log_2(r_{p,s}) \leq t \}.$$
For a given value of the factor $t$, a solver $s$ with a higher value of $\rho_{s}(t)$ is considered more efficient, as it successfully solves a larger number of problems within the same factor compared to the other solvers. The performance $\rho_{s}(t) = 1$ means that solver $s$ successfully handles all tested problems within the given performance factor $t$. The comparable algorithms are evaluated throughout our experiments according to the following criteria: objective function value $f(x_k) - \min_{\mathcal{H}} f$,  gradient norm $\Vert \nabla f(x_k) \Vert$, strong convergence measure $\Vert x_k - x^* \Vert$, discrete velocity $\Vert x_k - x_{k-1}\Vert$, the number of iterations, and CPU time (in seconds).

\subsection{Problem 1 : Simple quadratic problem} \label{sec:test_problem_simple}

We consider a simple quadratic problem with the aim of  examining the strong convergence behavior of Algorithm \ref{algo:triga} and facilitating a fair comparison to algorithm \ref{algo:nadtr} based on the aforementioned performance criteria. In particular, we test on the problem of minimizing the following quadratic function $f \colon \mathbb{R}^{2n} \to \mathbb{R}$ introduced in \cite{Attouch:jde:22}:
$$
 f(x_1, \ldots, x_{2n}) = \dfrac{1}{2} \sum_{i=1}^{n} (x_{2i-1} + x_{2i}-1)^2.
$$
 The function $f$ is convex, but not strongly convex. The solution set $\mathcal{S}$ is the affine subspace $\{x \in \mathbb{R}^{2n} \colon x_{2i-1} + x_{2i} - 1 = 0, \text{ for all } i=1, \ldots, n \}$ and its unique minimum-norm element is $x^* = \left( \frac{1}{2}, \ldots, \frac{1}{2} \right)$. 
  
In order to illustrate the strong convergence of the sequence $(x_k)_k$ generated by \ref{algo:triga} to the minimum-norm solution $x^*$, we plot its trajectory in the $2$-dimensional space $\mathbb{R}^2$ ($n=1$). In addition, we also display the trajectory of \ref{algo:nag} for comparison. It should be noted that \ref{algo:nag} does not guarantee both the fast convergence rate of the values and the strong convergence of the iterates toward the minimum-norm solution $x^*$. Both algorithms initialized from the same starting point. Figure \ref{fig:trajectory-2-d} shows the trajectories of \ref{algo:nag} and \ref{algo:triga}. Unsurprisingly, \ref{algo:nag} needs $2$ iterations to reach an arbitrary solution in $\mathcal{S}$ which is totally far from $x^*$. On the other hand, \ref{algo:triga} require more iterations, however, all generated sequences $(x_k)_k$ converge toward the point $x^*$. It is therefore reasonable to not compare \ref{algo:triga} and \ref{algo:nadtr} with \ref{algo:nag} in the remaining tests. Among difference values of $p$, our algorithm works better with $p \in \{1.95, 1.99\}$ when approaching faster to $x^*$. 

\begin{figure}[htbp]
\centering
\includegraphics[width=0.6\linewidth]{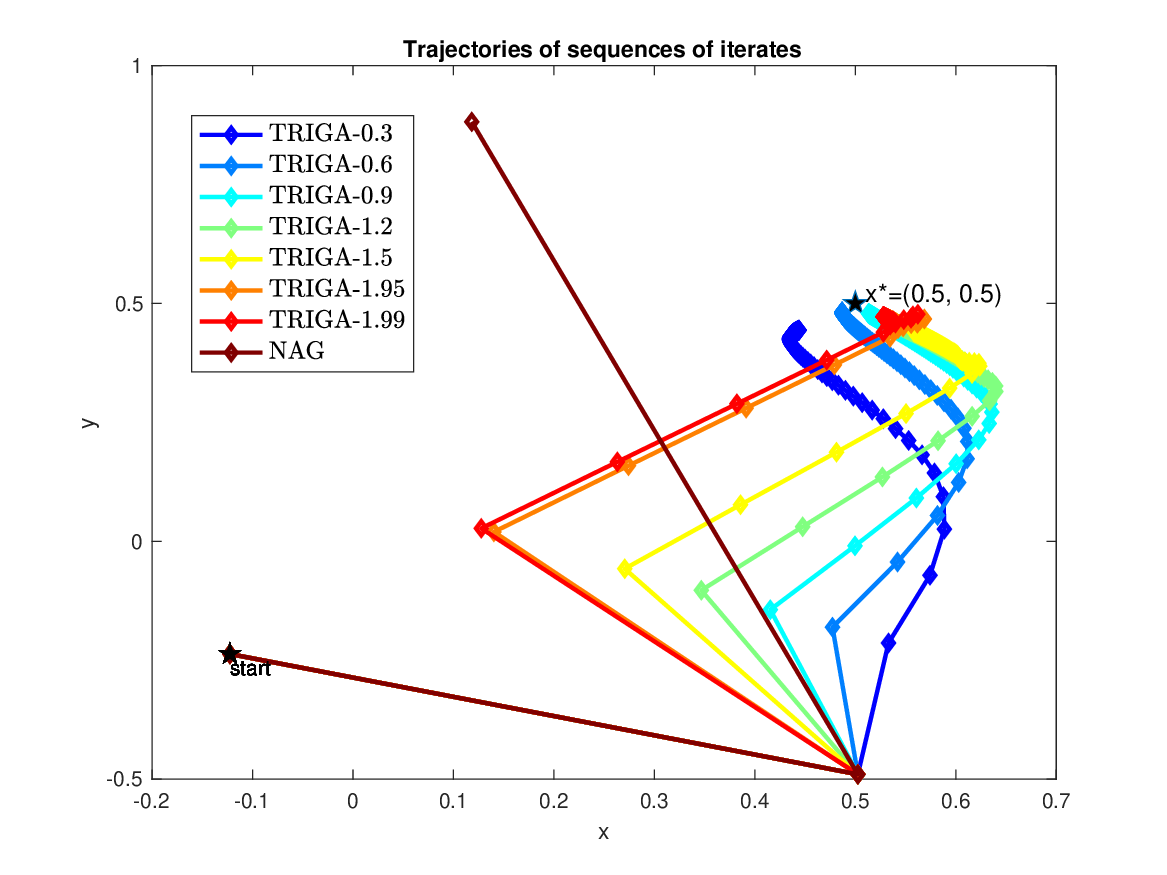}
\caption{Strong convergence to the minimum-norm solution $x^*=(0.5,0.5)$ in $\mathbb{R}^2$}
\label{fig:trajectory-2-d}
\end{figure}

We run \ref{algo:triga} and \ref{algo:nadtr} on $\mathbb{R}^{20}$ ($n=10$) with different choices of $p$ and the maximum of 100 iterations. The results in objective function value $f(x_k) - \min_{\mathcal{H}} f$,  gradient norm $\Vert \nabla f(x_k) \Vert$, strong convergence measure $\Vert x_k - x^* \Vert$, and discrete velocity $\Vert x_k - x_{k-1}\Vert$ with the step-size $s=\frac{1}{1.1L}$ (resp. $s=\frac{1}{2.1L}$) are shown in Figure \ref{fig:simple_large_s} (resp. Figure \ref{fig:simple_small_s}). 

\begin{figure}[htbp]
\centering
\includegraphics[width=0.46\linewidth]{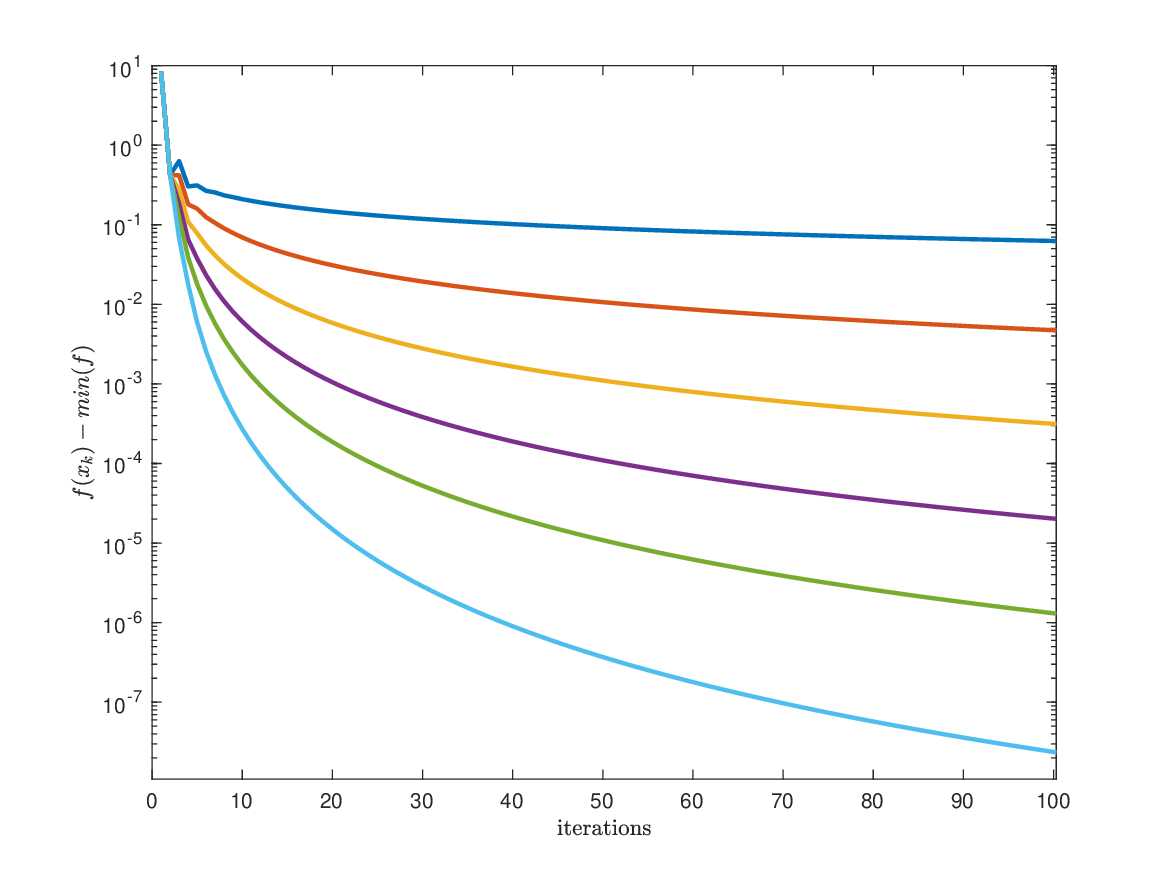}
\includegraphics[width=0.46\linewidth]{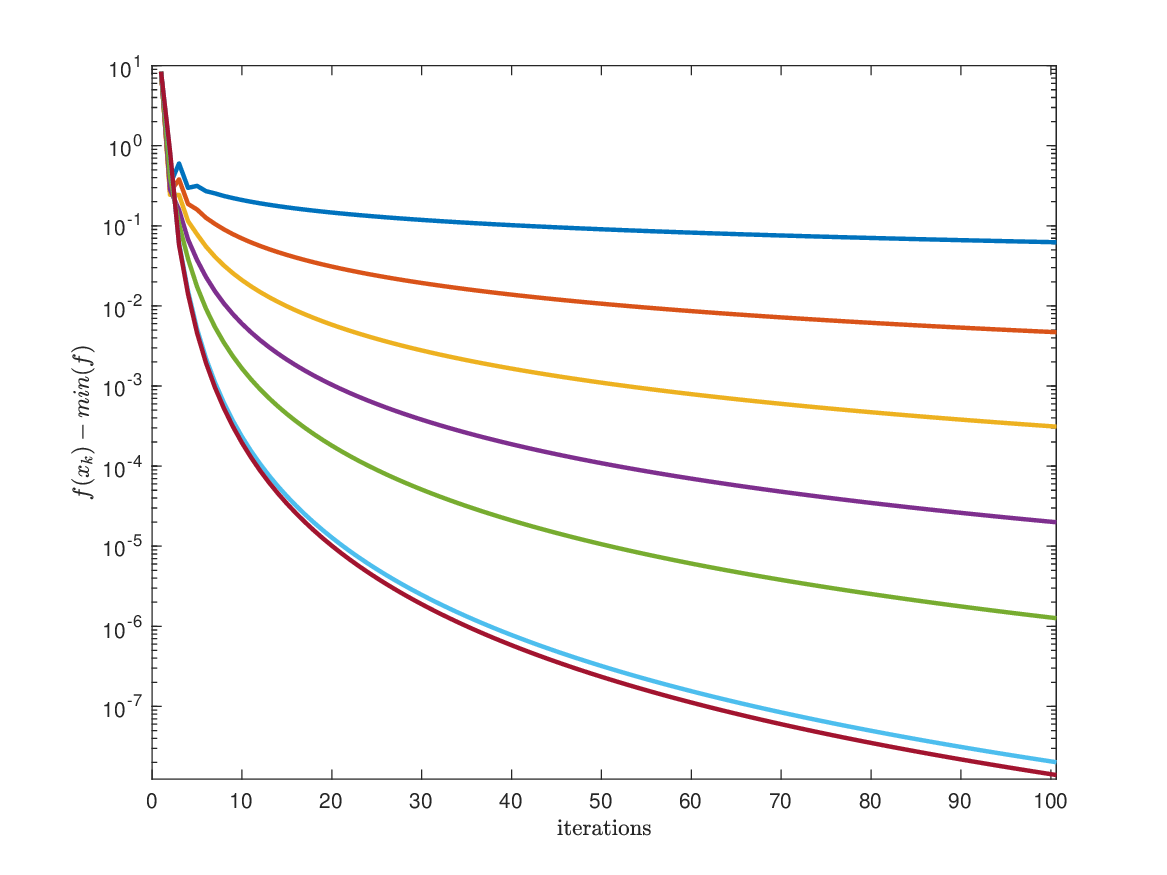}
\includegraphics[width=0.46\linewidth]{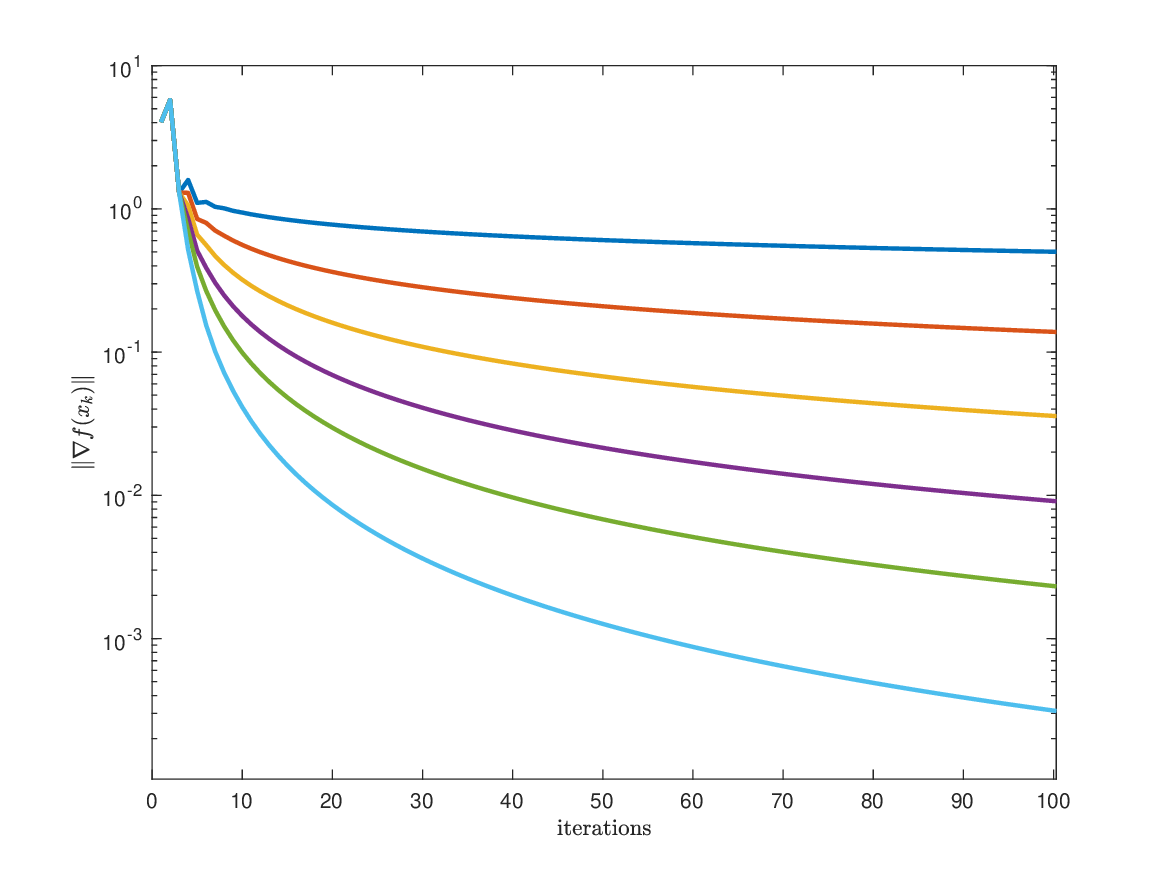}
\includegraphics[width=0.46\linewidth]{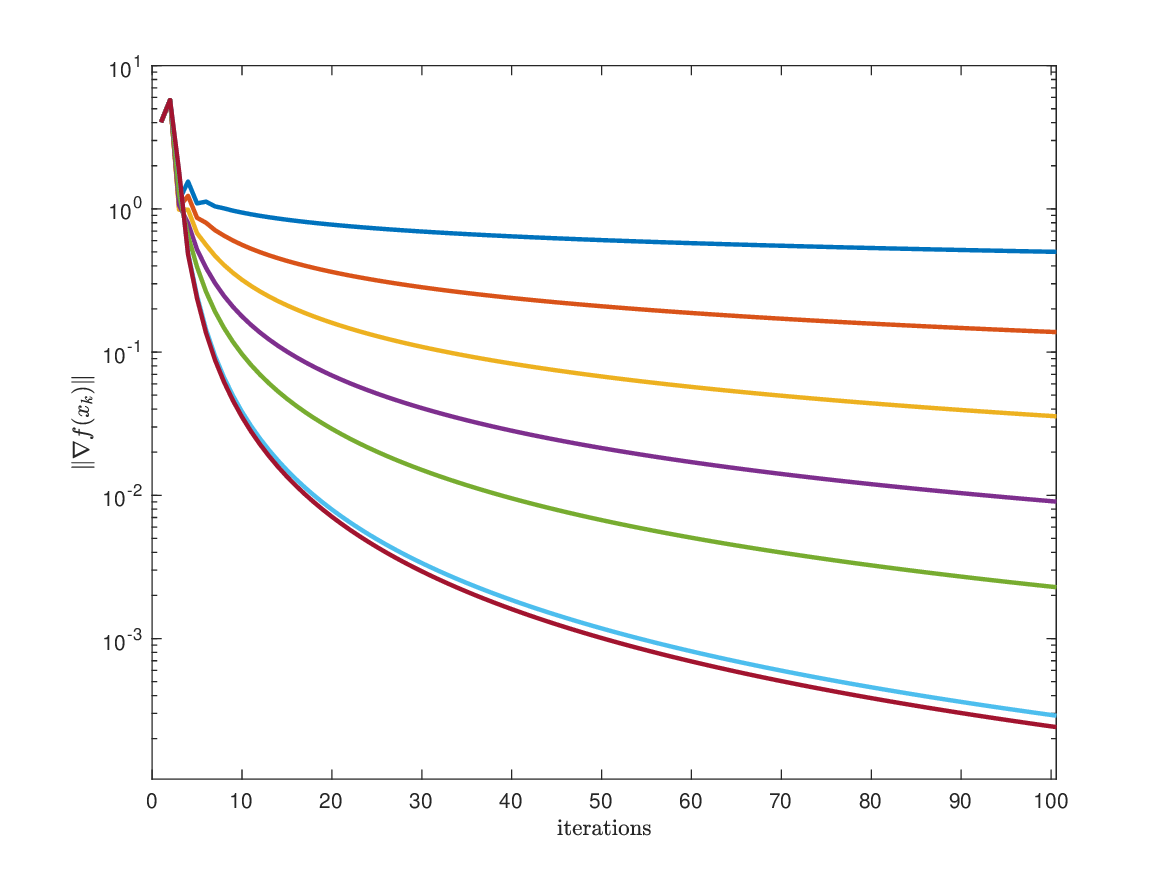}
\includegraphics[width=0.46\linewidth]{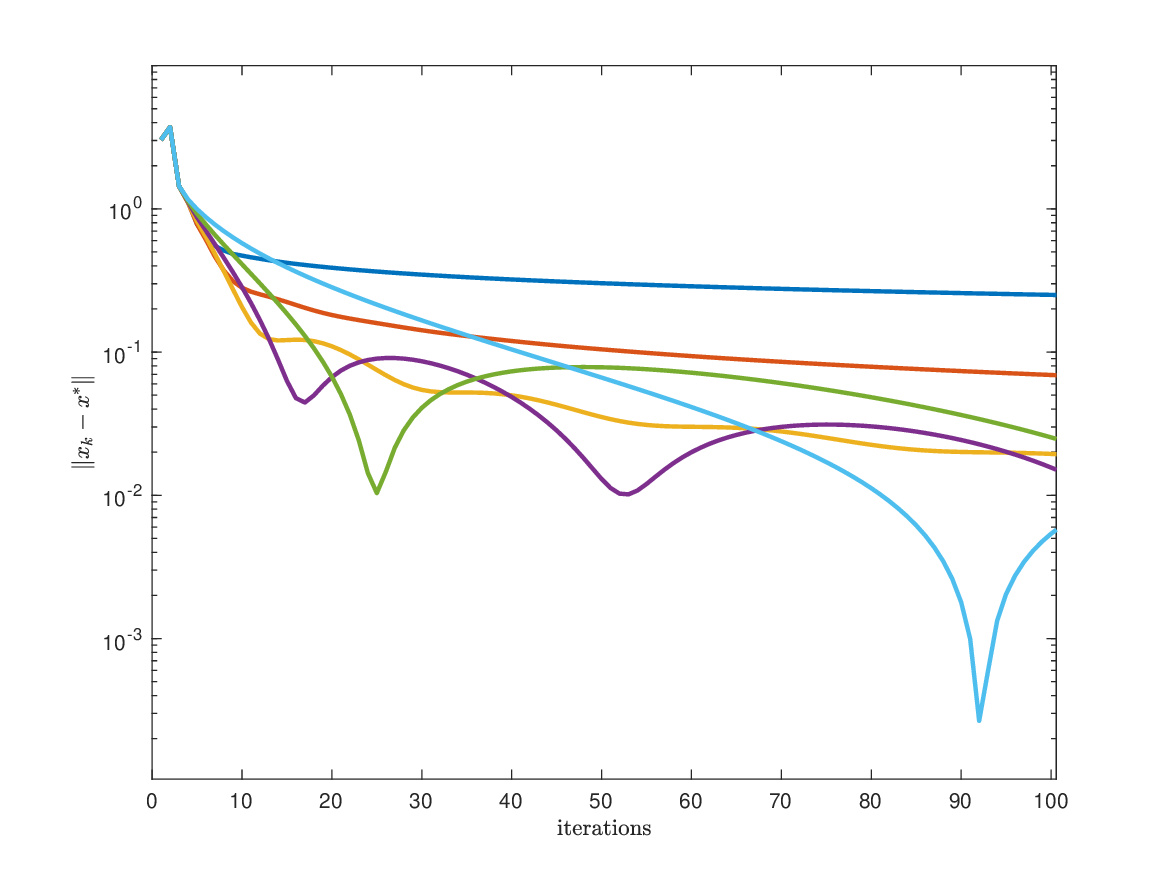}
\includegraphics[width=0.46\linewidth]{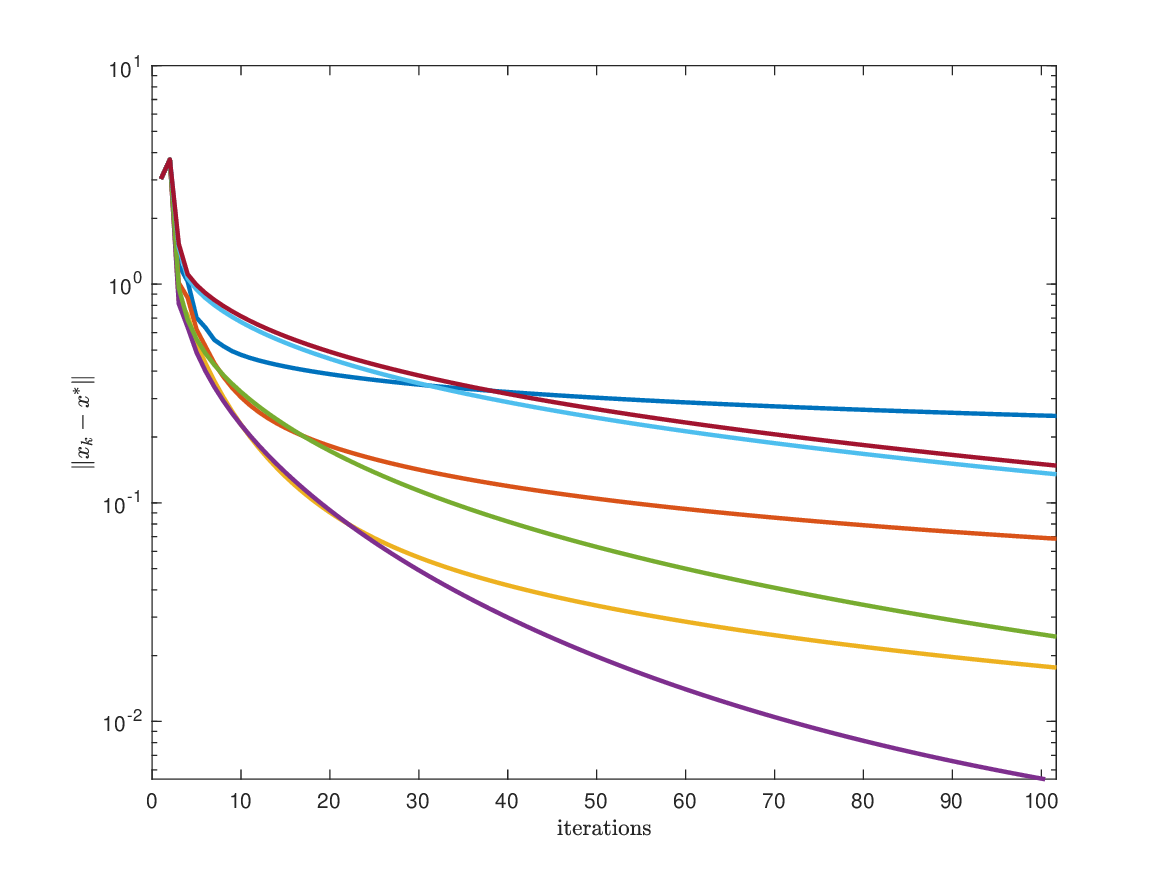}
\includegraphics[width=0.46\linewidth]{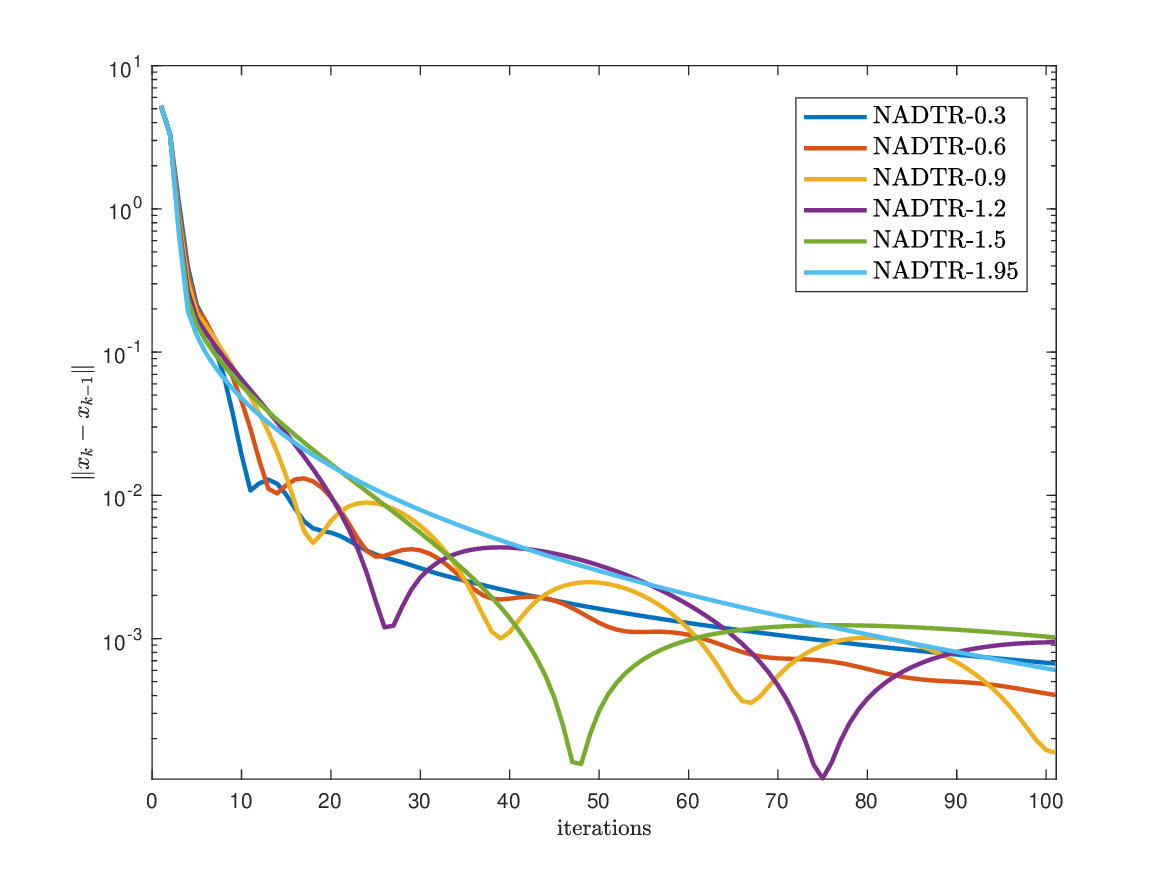}
\includegraphics[width=0.46\linewidth]{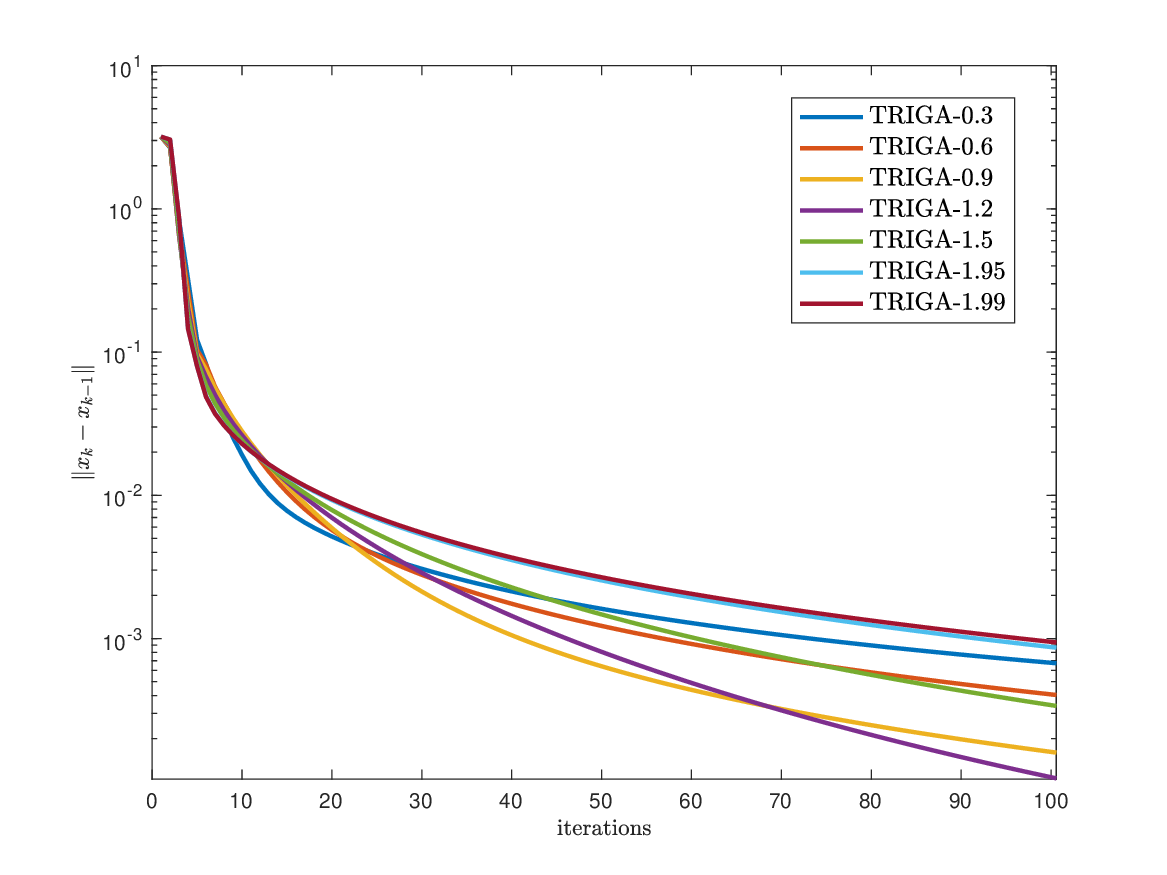}
\caption{Performance in iterations of \ref{algo:nadtr} (left) and \ref{algo:triga} (right) with $s = \dfrac{1}{1.1L}$ in terms of the criteria (from top to bottom): $f(x_k) - \min_\mathcal{H} f$, $\Vert \nabla f(x_k) \Vert$, $\Vert x_k - x^* \Vert$ and $\Vert x_k - x_{k-1}\Vert$}
\label{fig:simple_large_s}
\end{figure}

\begin{figure}[htbp]
\centering
\includegraphics[width=0.46\linewidth]{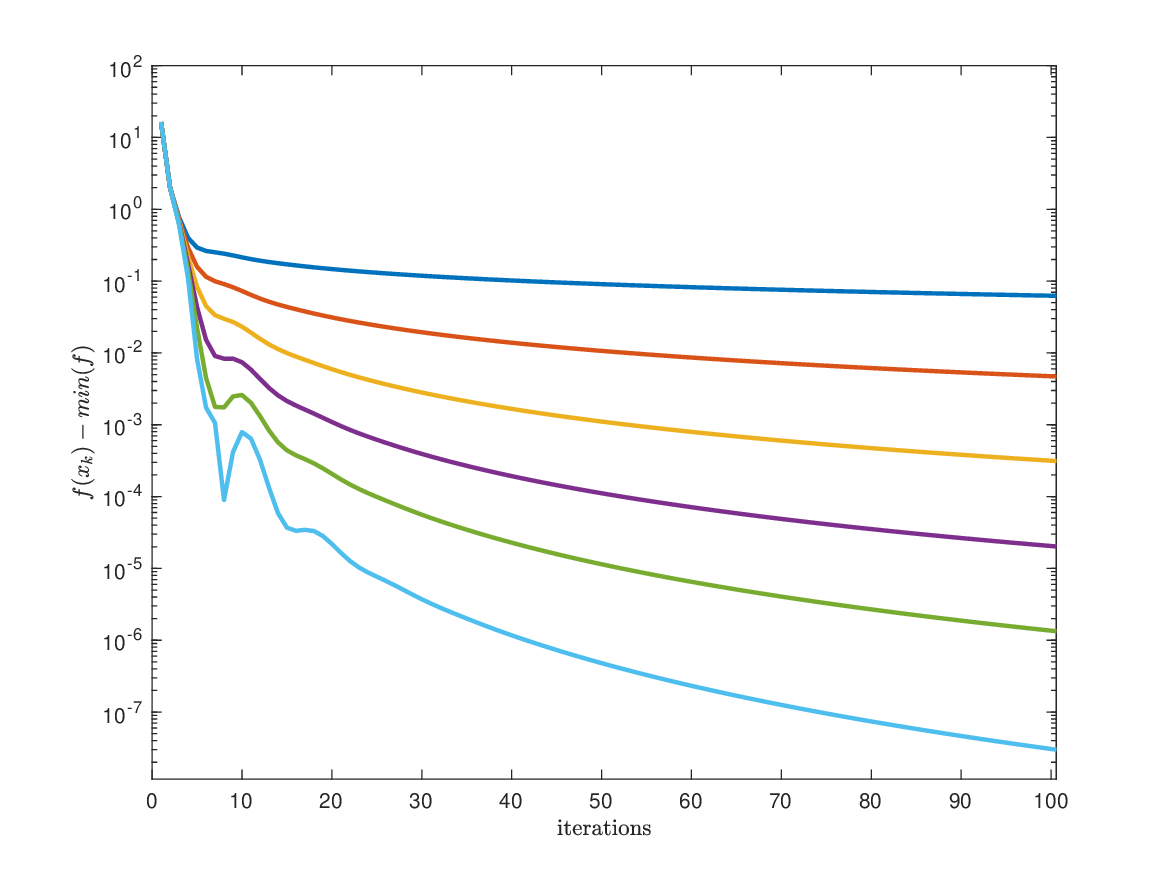}
\includegraphics[width=0.46\linewidth]{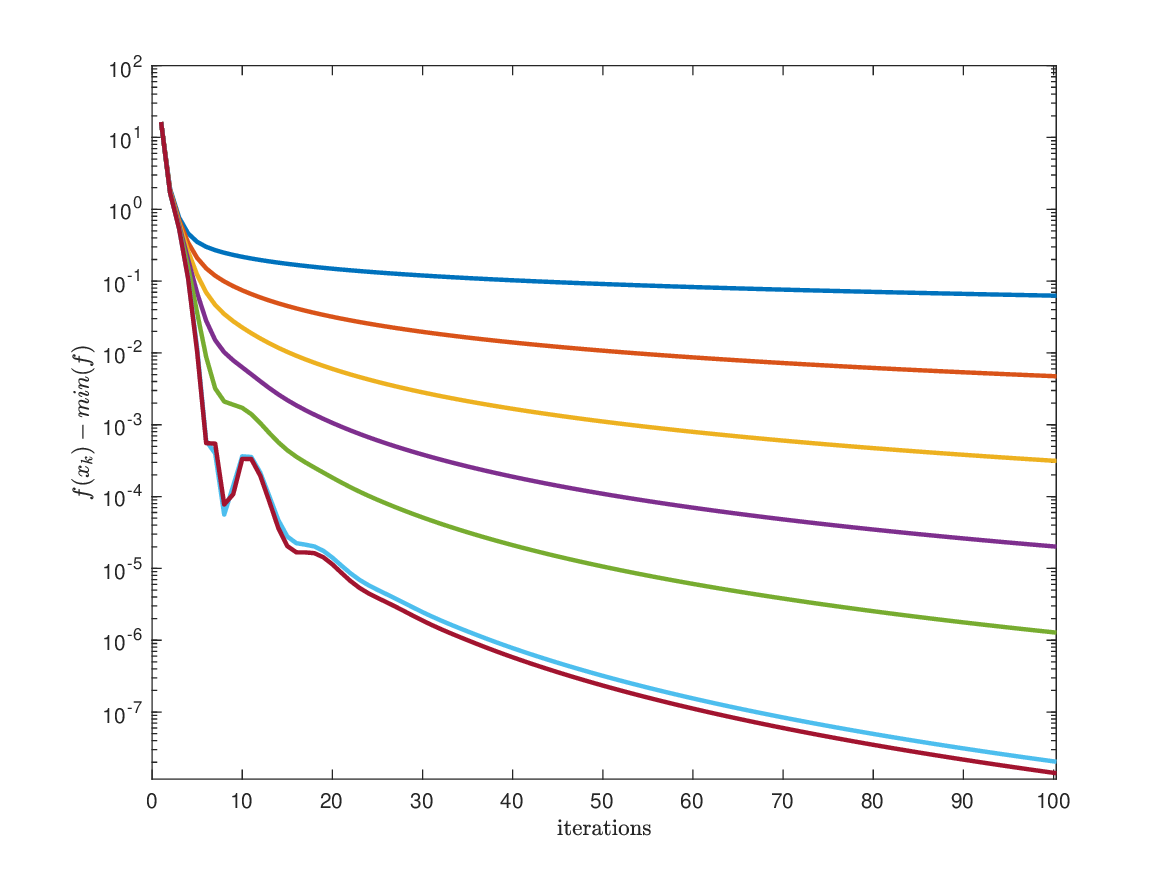}
\includegraphics[width=0.46\linewidth]{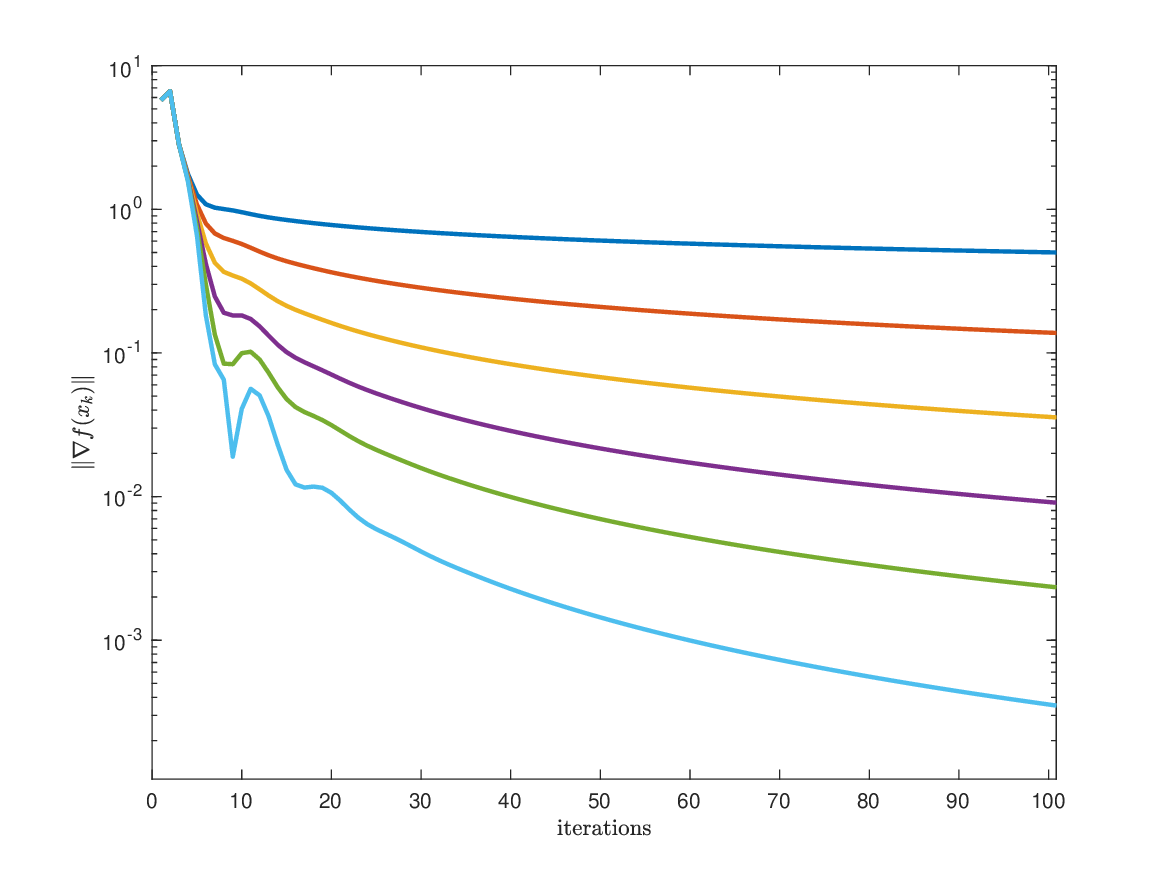}
\includegraphics[width=0.46\linewidth]{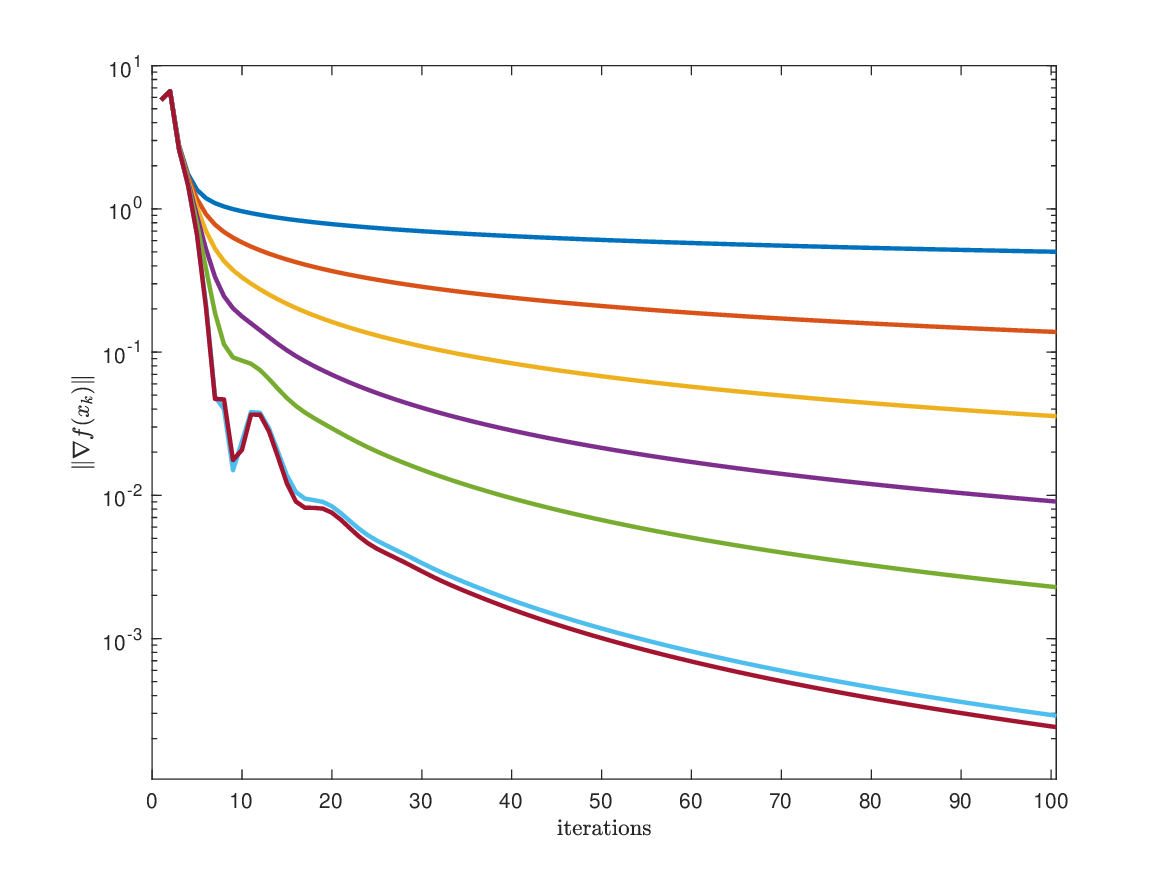}
\includegraphics[width=0.46\linewidth]{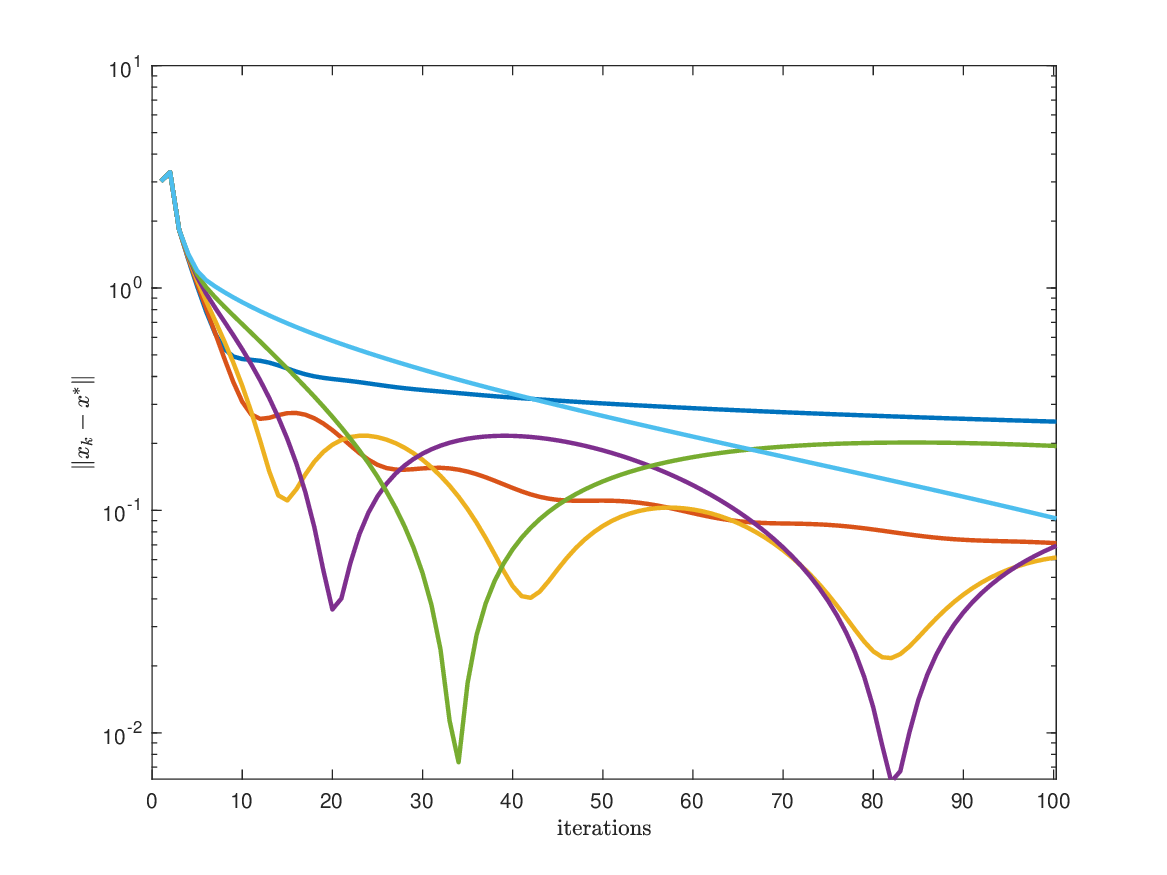}
\includegraphics[width=0.46\linewidth]{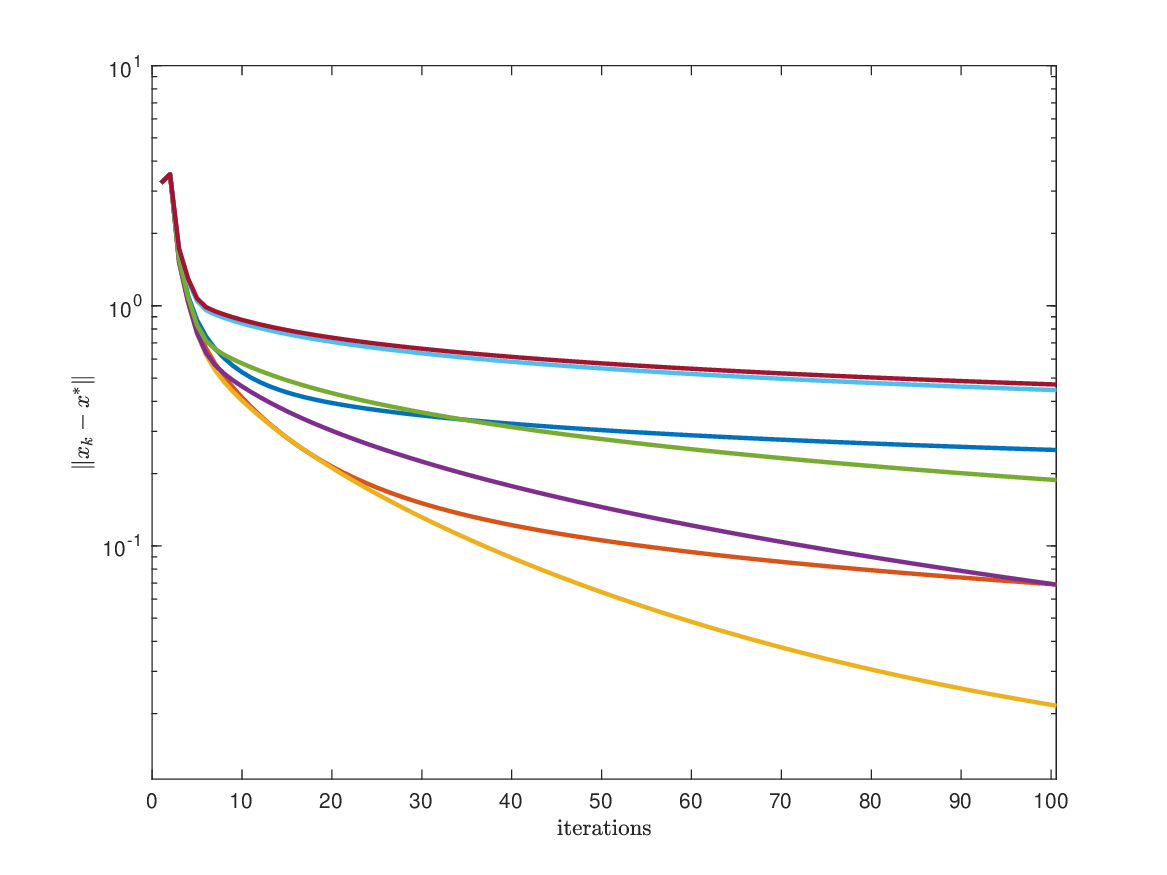}
\includegraphics[width=0.46\linewidth]{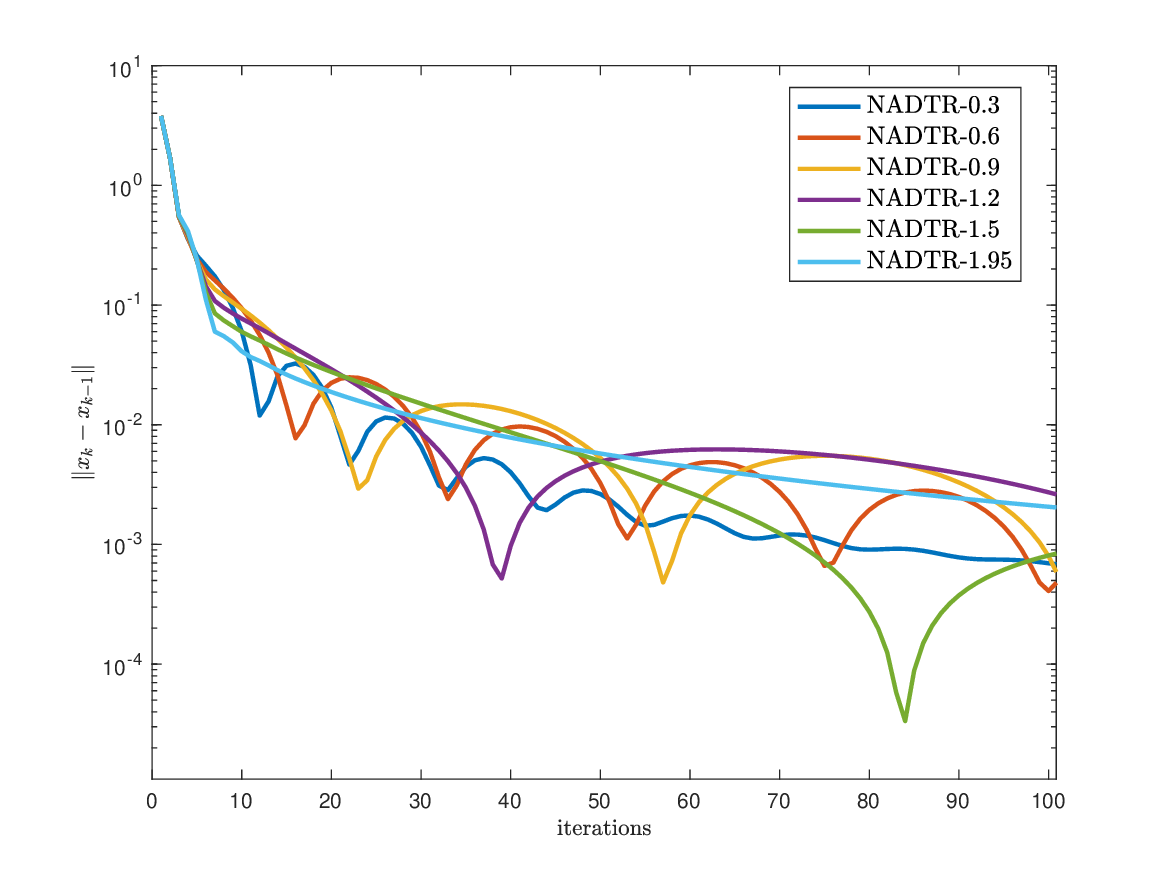}
\includegraphics[width=0.46\linewidth]{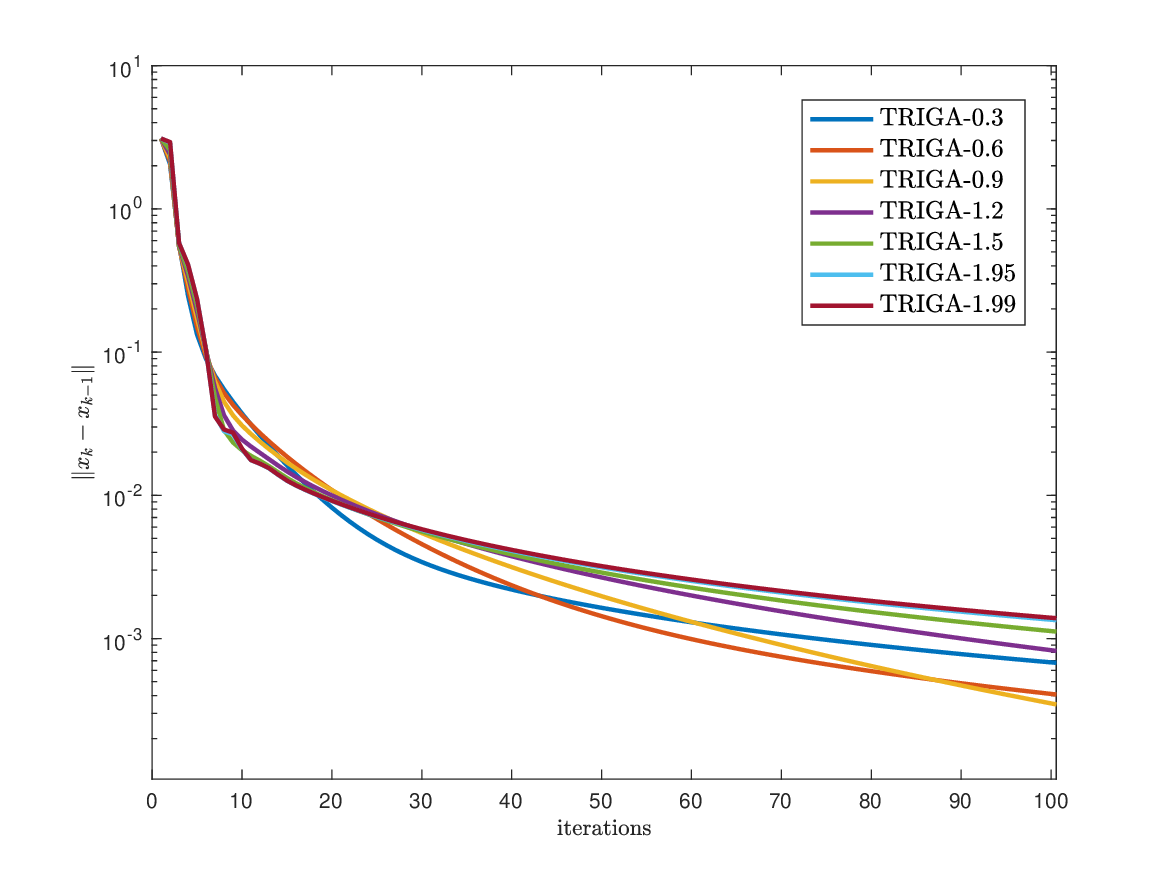}
\caption{Performance in iterations of \ref{algo:nadtr} (left) and \ref{algo:triga} (right) with $s = \dfrac{1}{2.1 L}$ in terms of the criteria (from top to bottom): $f(x_k) - \min_\mathcal{H} f$, $\Vert \nabla f(x_k) \Vert$, $\Vert x_k - x^* \Vert$ and $\Vert x_k - x_{k-1}\Vert$}
\label{fig:simple_small_s}
\end{figure}

In terms of objective function values and norm of the gradient, \ref{algo:triga} and \ref{algo:nadtr} have a comparable performance for each value of $p$. We observe that the higher the value of $p$ is, the more the performance of both algorithms enhances. The values of $p$ close to $2$ seem to be good choices. Concerning the strong convergence $\|x_k - x^*\|$, \ref{algo:nadtr} is similar as (resp. better than) \ref{algo:triga} with $p \leq 1.5$ (resp. $p\in \{1.95, 1.99\}$),
where $x^*$ denotes the minimum-norm solution of $\min f$ for this test problem (available in closed form), so that $\|x_k-x^*\|$ provides a standard proxy for strong convergence,
however \ref{algo:nadtr} exhibits more oscillation, especially for smaller step-size such as $s=\frac{1}{2.1L}$. Regarding the discrete velocity $\|x_k - x_{k-1}\|$, both algorithms \ref{algo:nadtr} and \ref{algo:triga} share quite similar rates in all values of $p$ but with more oscillatory behaviors in the case of \ref{algo:nadtr}. Selecting the larger step-size $s=\frac{1}{1.1L}$ makes the algorithms more stable. In conclusion for this test problem, both algorithms demonstrate comparable performance in most of four criteria; the difference is that \ref{algo:nadtr} yields better strong convergence $\|x_k -x^*\|$ in some tests, whereas our algorithm significantly reduces the oscillation effects.

\subsection{Problem 2: Linear least-squares problem}
The linear least-squares optimization problem is defined as
\begin{equation*}
    \min\limits_{x \in \mathbb{R}^n} f(x) = \dfrac{1}{2} \|Ax - b\|^2,
\end{equation*}
where $A \in \mathbb{R}^{m \times n}$ and $b \in \mathbb{R}^{m}$.
Two sets of these problems are used in this study: one based on real benchmark data and the other on synthetically generated instances. Specifically, for the benchmark data, we choose matrices $A$ from the SuiteSparse Matrix Collection\footnote{https://sparse.tamu.edu/} and draw $b$ randomly. Our set of problems consists of $38$ different problems with the size $m$ and $n$ varying from $9$ to $3600$. Since the chosen matrices $A$ are not full-rank, the objective function $f$ is convex, but not strongly convex. As for the synthetic data, we test on $40$ different problems where square matrices $A \in \mathbb{R}^{n \times n}$ and vectors $b \in \mathbb{R}^{n}$ are randomly generated with $n$ ranging from $5$ to $14$. All synthetic instances are generated with a fixed random seed, using standard Gaussian entries for $(A,b)$, and no additional tuning is performed across instances.

On various datasets, we evaluate the computational efficiency of \ref{algo:nadtr} and \ref{algo:triga} using two key criteria: the number of iterations and the CPU time (in seconds) which measures the computational time required by each algorithm for solving the problems. To choose the best value of $p$ for comparison, we execute both algorithms on benchmark datasets with different Tikhonov regularization terms. Then, we compare their performance with chosen best value of $p$ on all datasets of each type. The performance profiles of \ref{algo:triga} (resp. \ref{algo:nadtr}) with different $p$ in terms of CPU time and number of iterations are reported in Figure \ref{fig:best_p_triga} (resp. Figure \ref{fig:best_p_nadtr}). Figure \ref{fig:performance_lasso_benchmark} (resp. Figure \ref{fig:performance_lasso_synthetic}) represents the performance profiles of \ref{algo:triga} and \ref{algo:nadtr} with the chosen best value of $p$ and the same random starting points.
  \begin{figure}[htbp]
  \centering
  \includegraphics[width=0.46\linewidth]{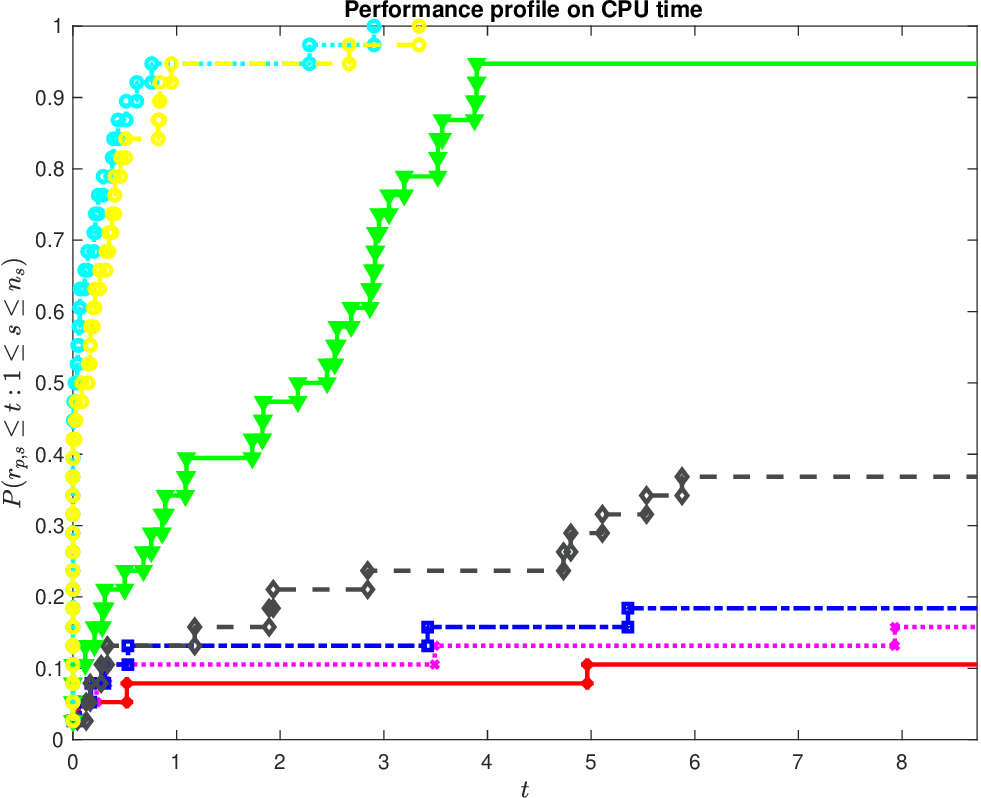}
  \includegraphics[width=0.46\linewidth]{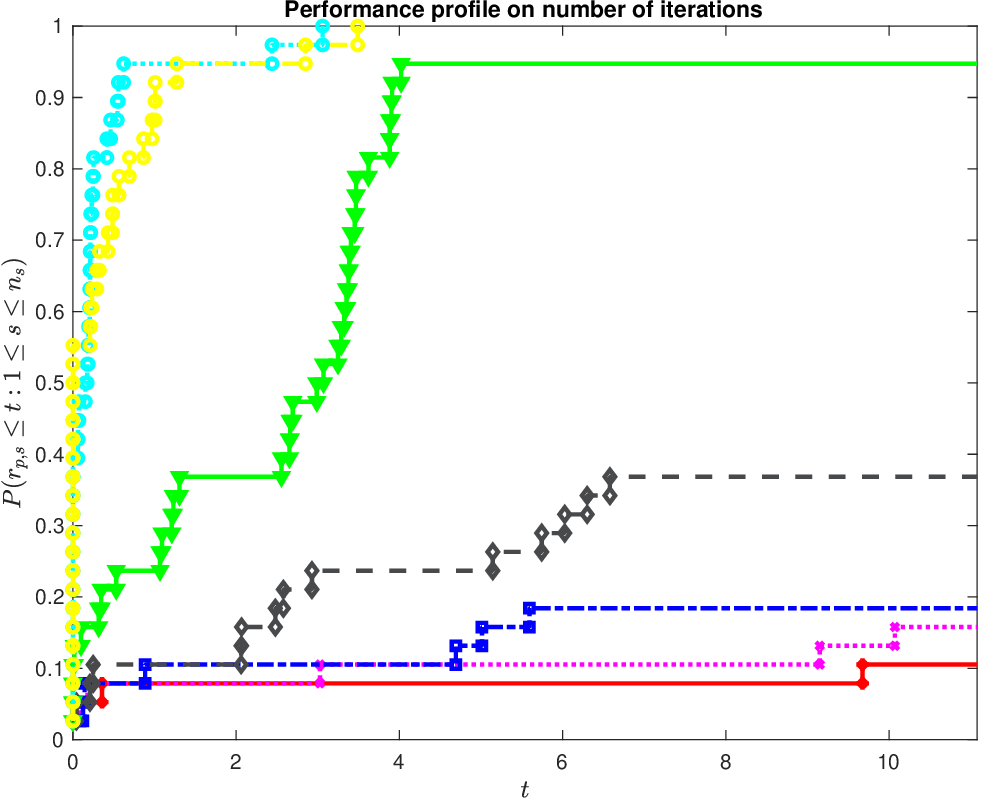}
  \includegraphics[width=0.46\linewidth]{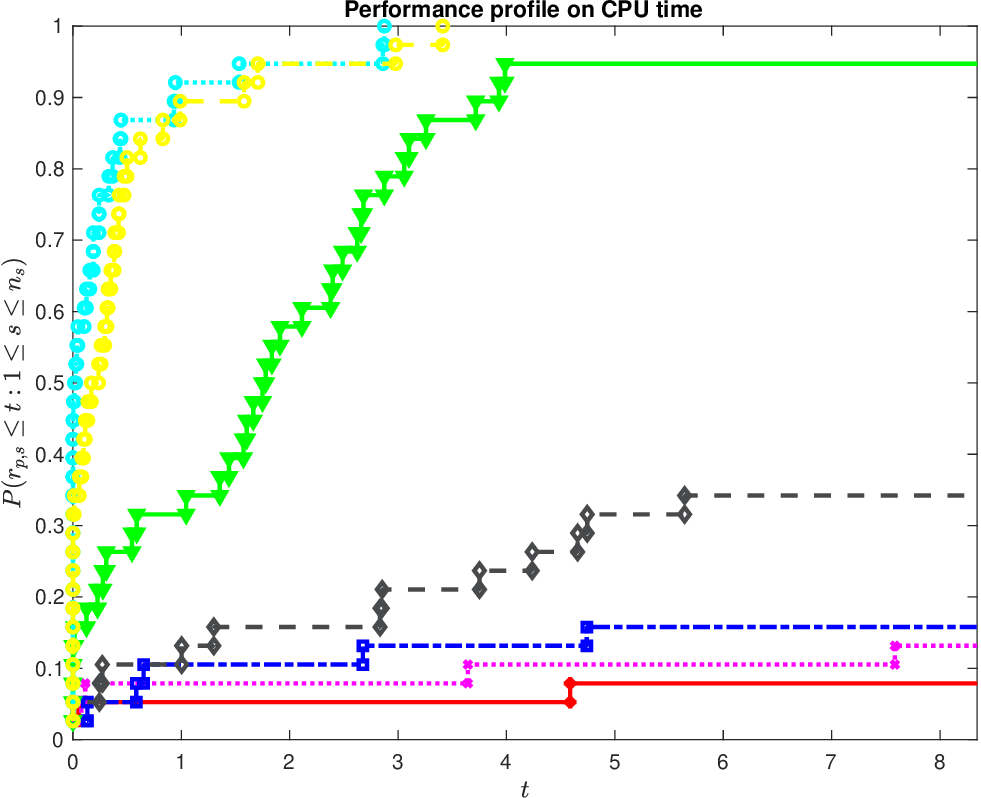}
  \includegraphics[width=0.46\linewidth]{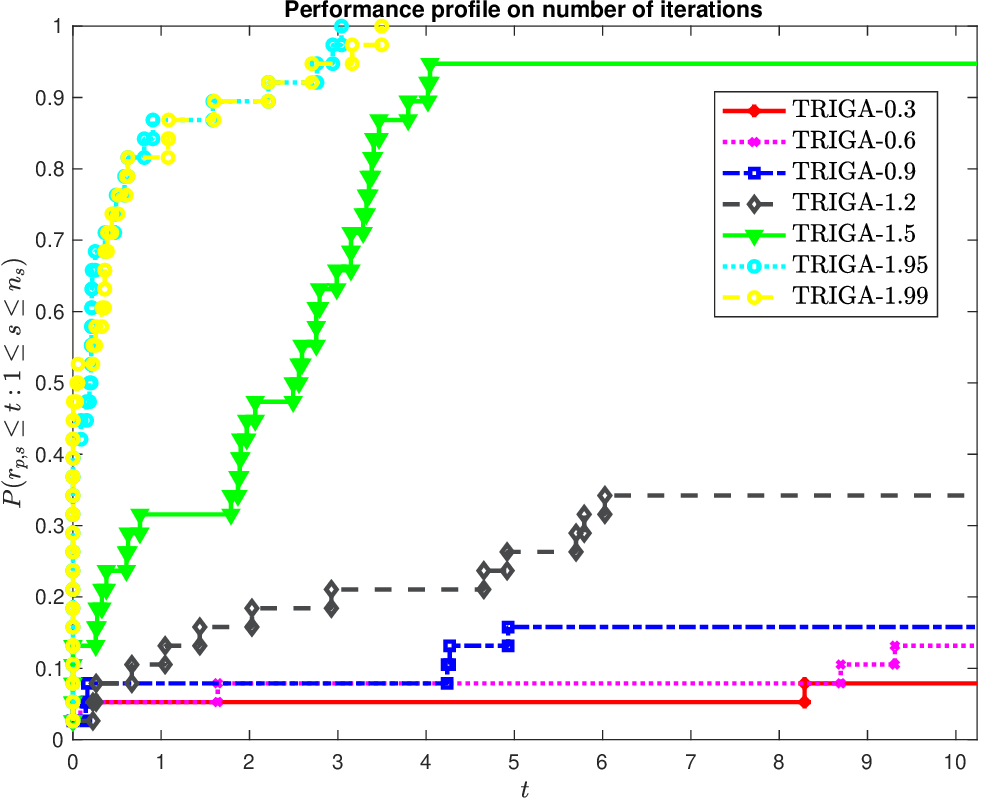}
  \caption{Performance profiles of \ref{algo:triga} for different values of $p$ when $s = \frac{1}{1.1L}$ (first row) and $s = \frac{1}{2.1L}$ (second row) in terms of CPU time (in seconds) (left) and number of iterations (right) on benchmark datasets of linear least-squares problems}  \label{fig:best_p_triga}
  \end{figure}

  \begin{figure}[htbp]
  \centering
  \includegraphics[width=0.46\linewidth]{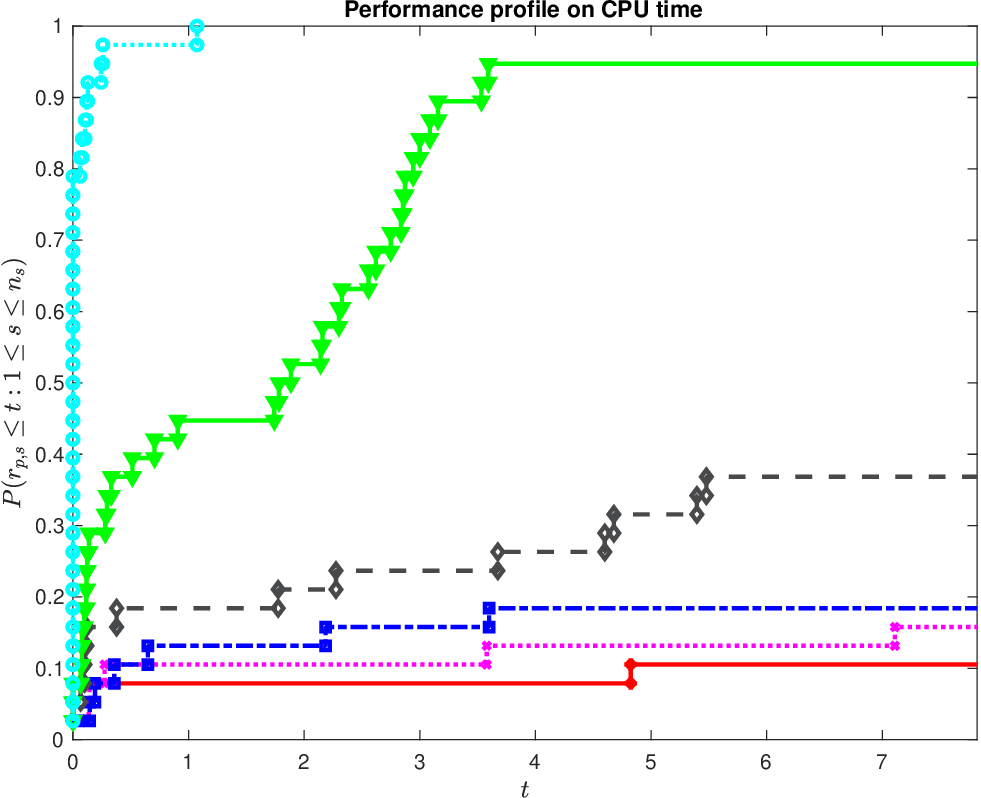}
  \includegraphics[width=0.46\linewidth]{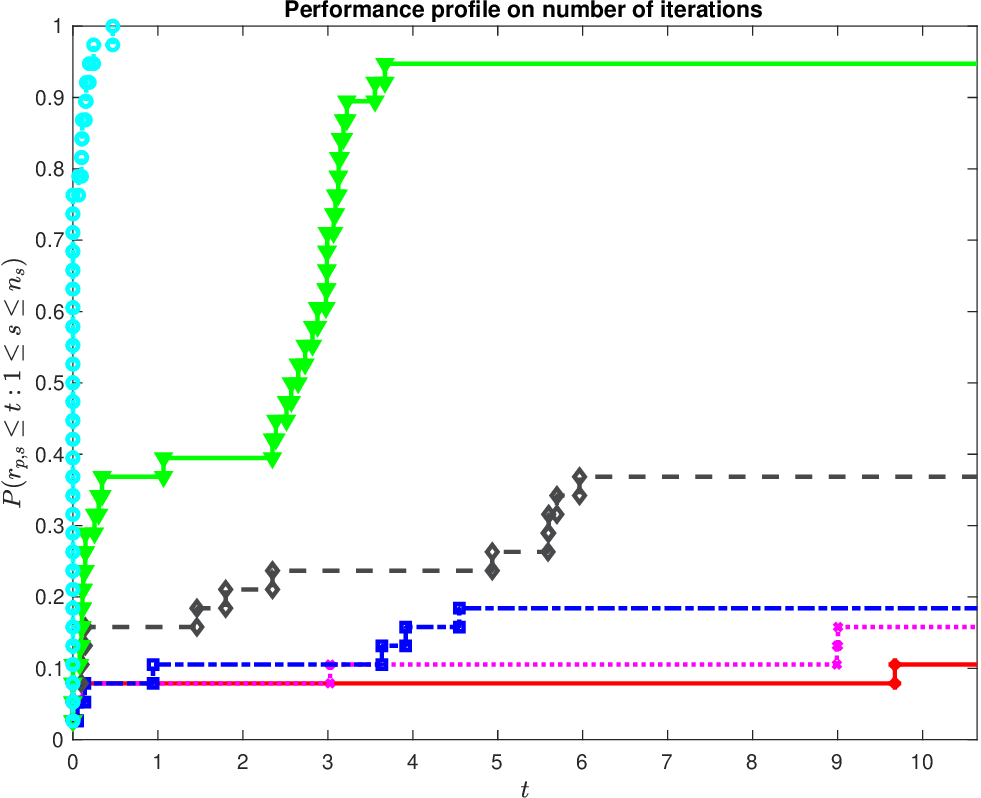}
  \includegraphics[width=0.46\linewidth]{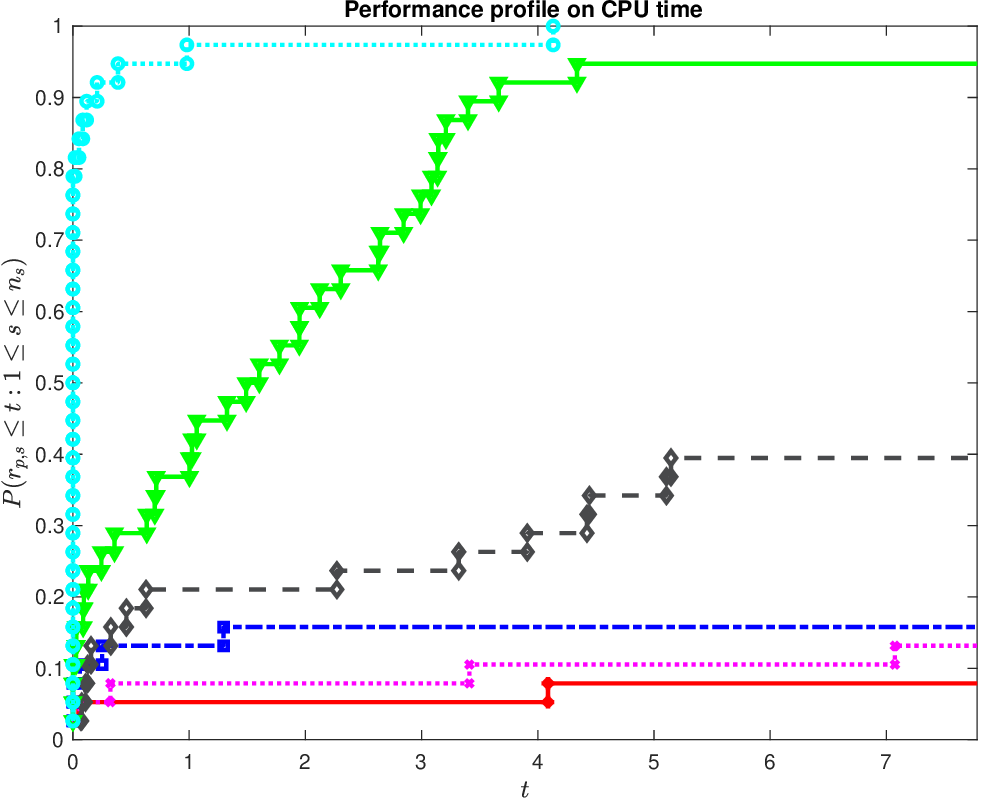}
  \includegraphics[width=0.46\linewidth]{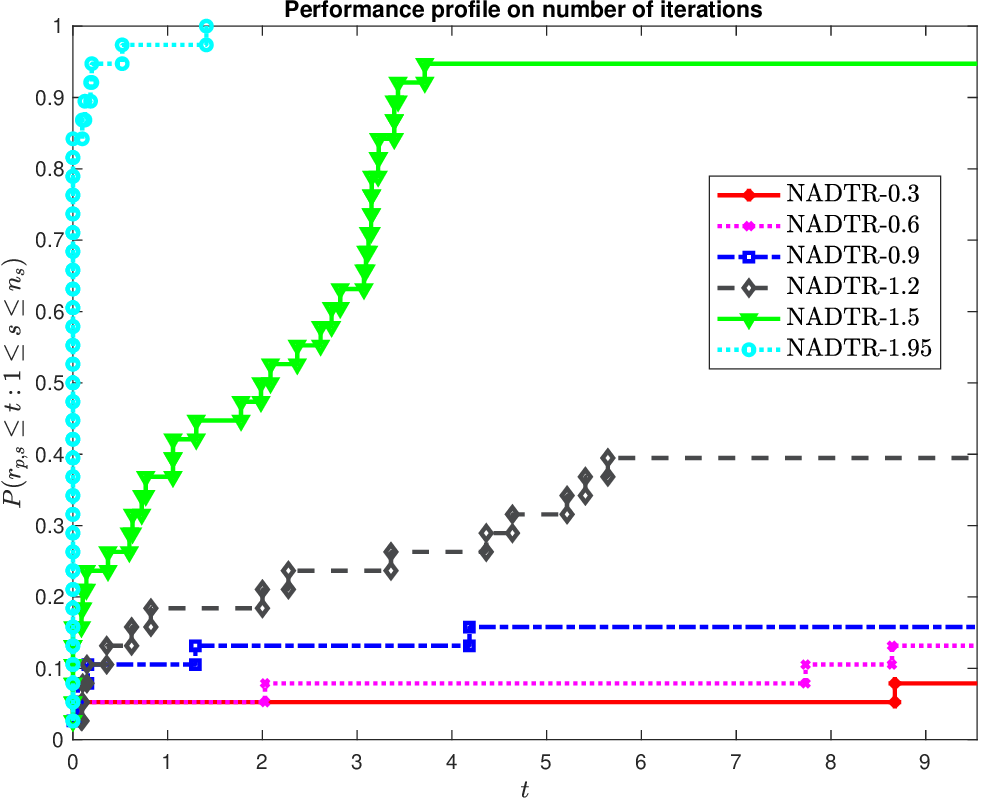}
  \caption{Performance profiles of \ref{algo:nadtr} for different values of $p$ when $s = \frac{1}{1.1L}$ (first row) and $s = \frac{1}{2.1L}$ (second row) in terms of CPU time (in seconds) (left) and number of iterations (right) on benchmark datasets of linear least-squares problems} \label{fig:best_p_nadtr}
  \end{figure}

  \begin{figure}[htbp]
  \centering
  \includegraphics[width=0.46\linewidth]{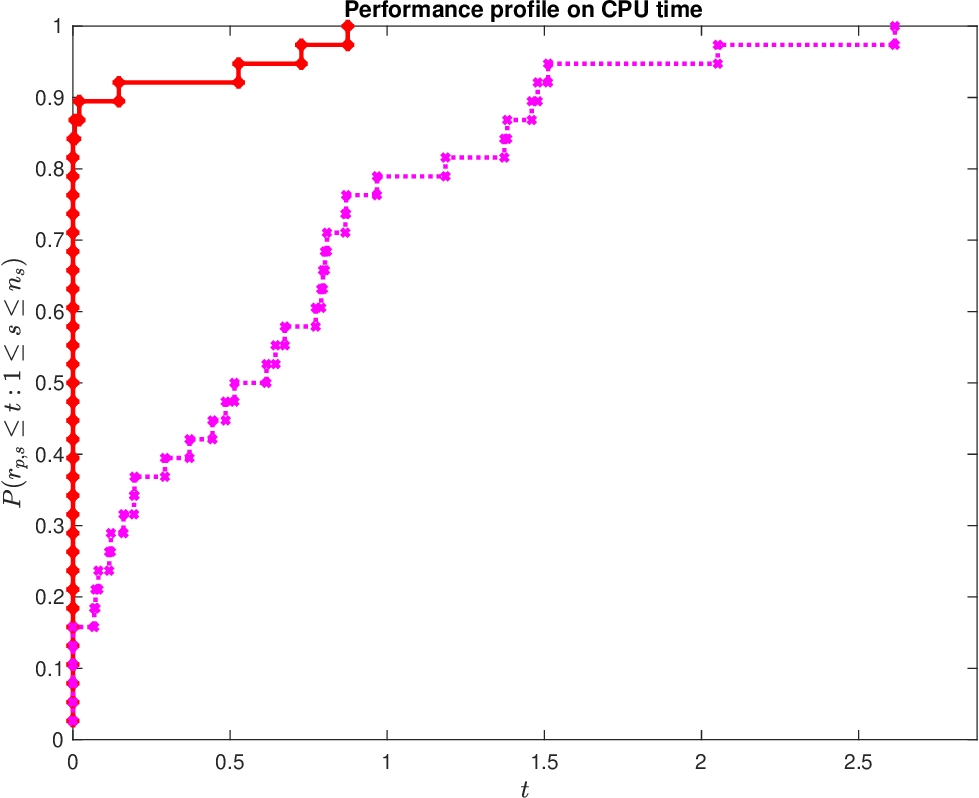}
  \includegraphics[width=0.46\linewidth]{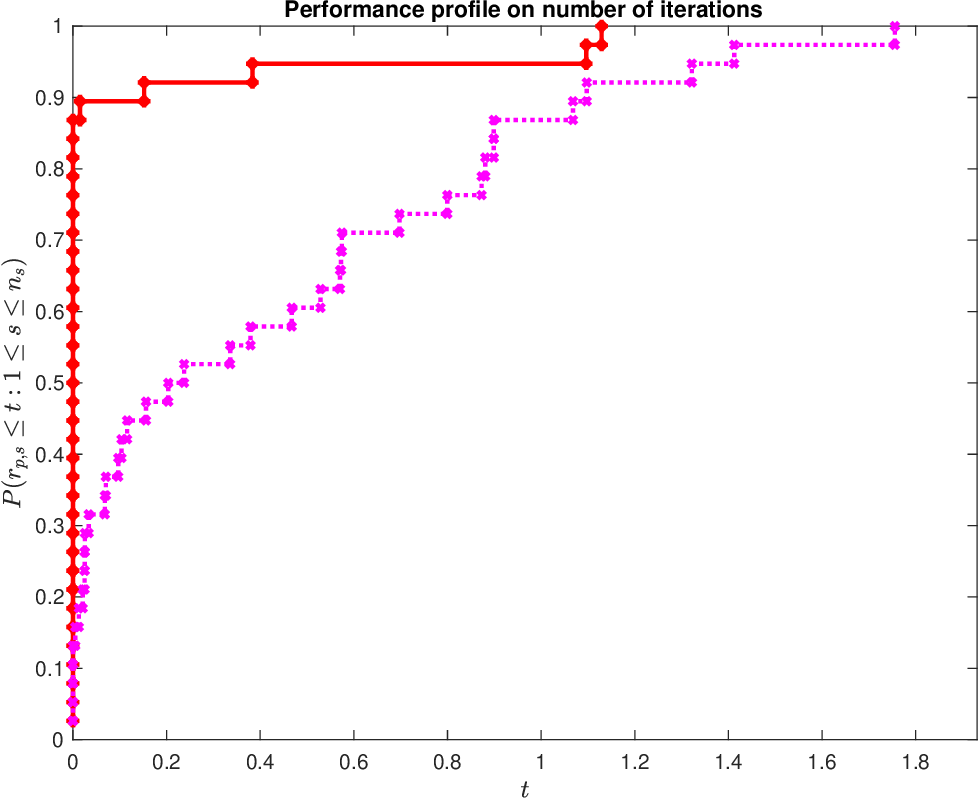}
  \includegraphics[width=0.46\linewidth]{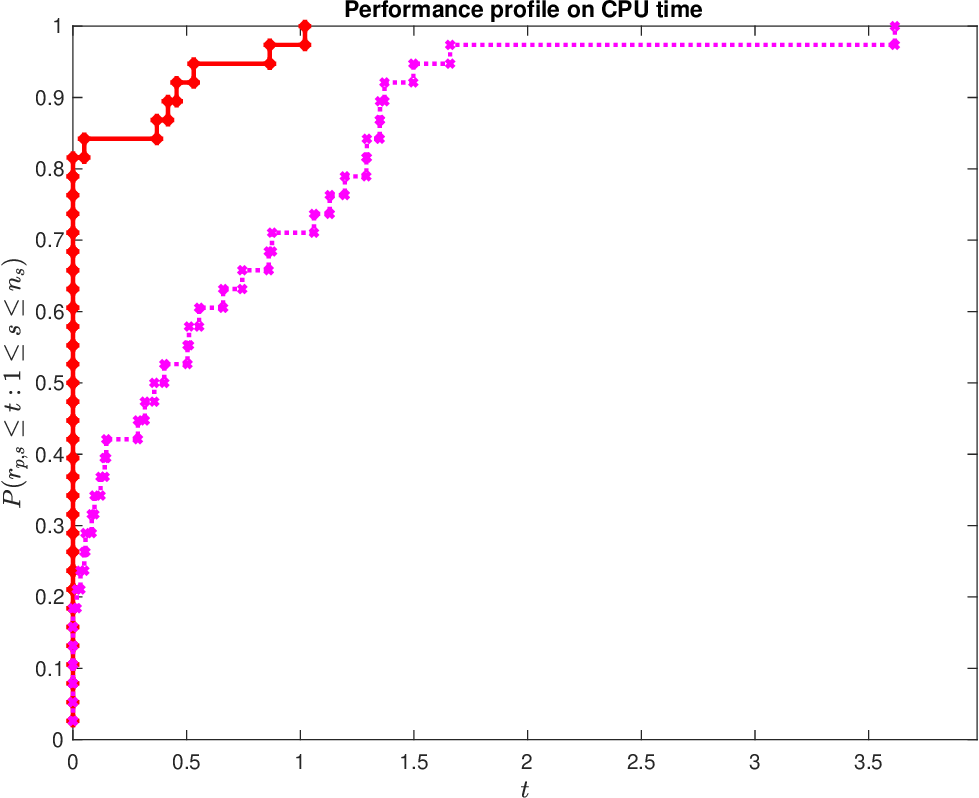}
  \includegraphics[width=0.46\linewidth]{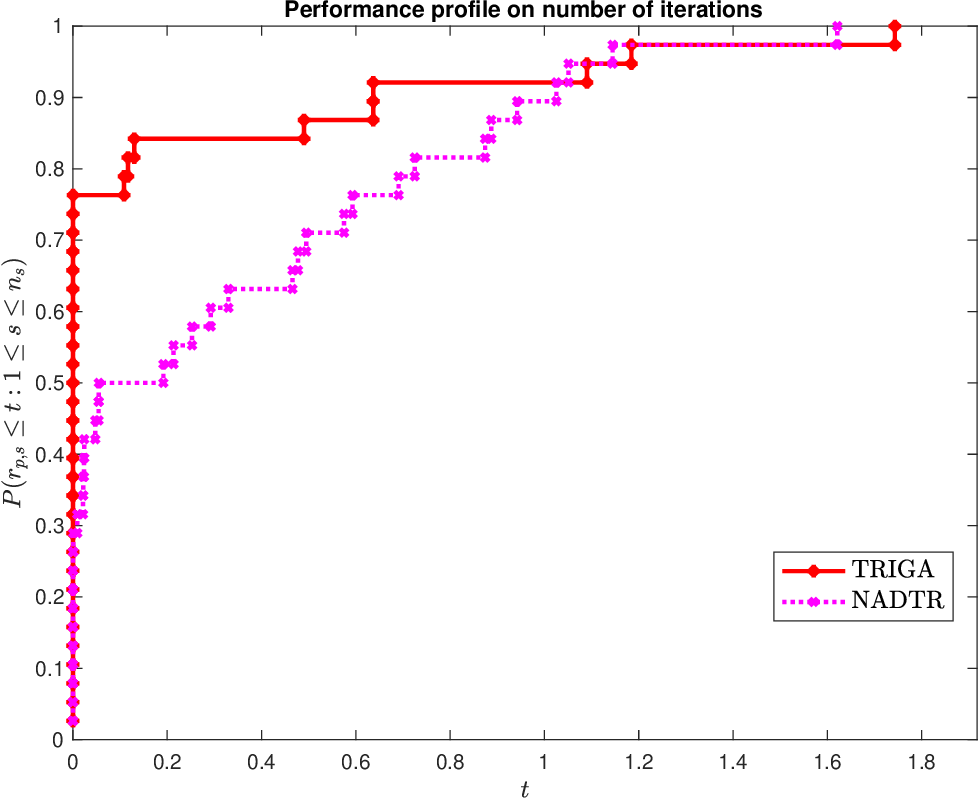}
  \caption{Performance profiles of \ref{algo:triga} and \ref{algo:nadtr} with $p=1.95$ when $s = \frac{1}{1.1L}$ (first row) and $s = \frac{1}{2.1L}$ (second row) in terms of CPU time (in seconds) (left) and number of iterations (right) on benchmark datasets of linear least-squares problems}
  \label{fig:performance_lasso_benchmark}
  \end{figure}

  \begin{figure}[htbp]
  \centering
  \includegraphics[width=0.46\linewidth]{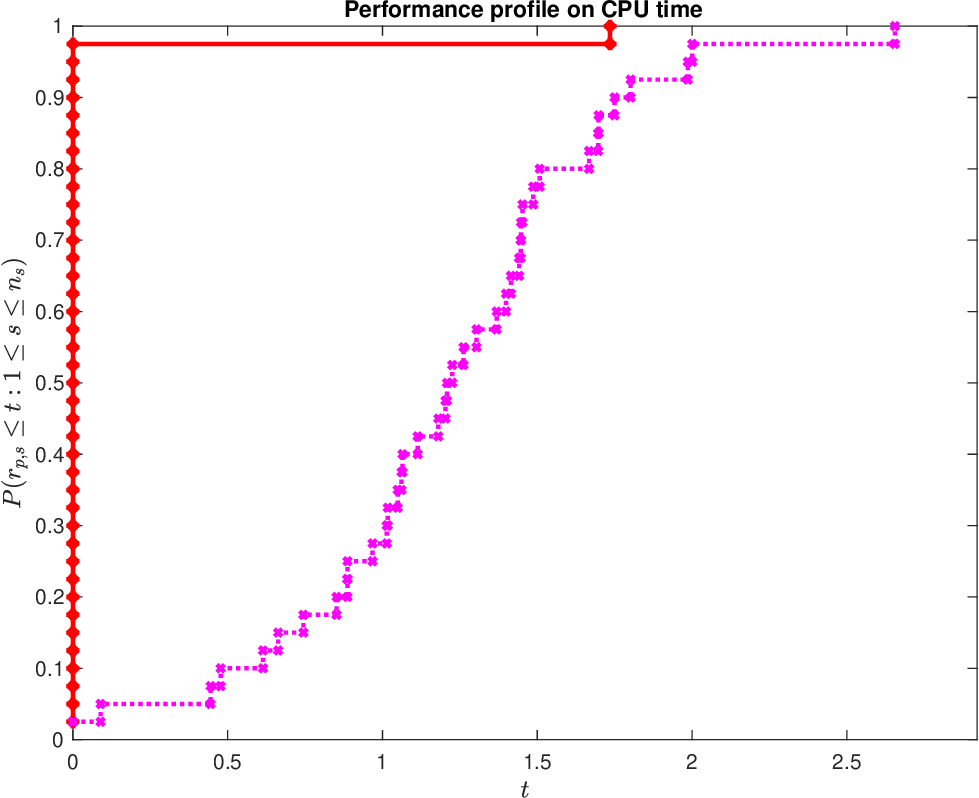}
  \includegraphics[width=0.46\linewidth]{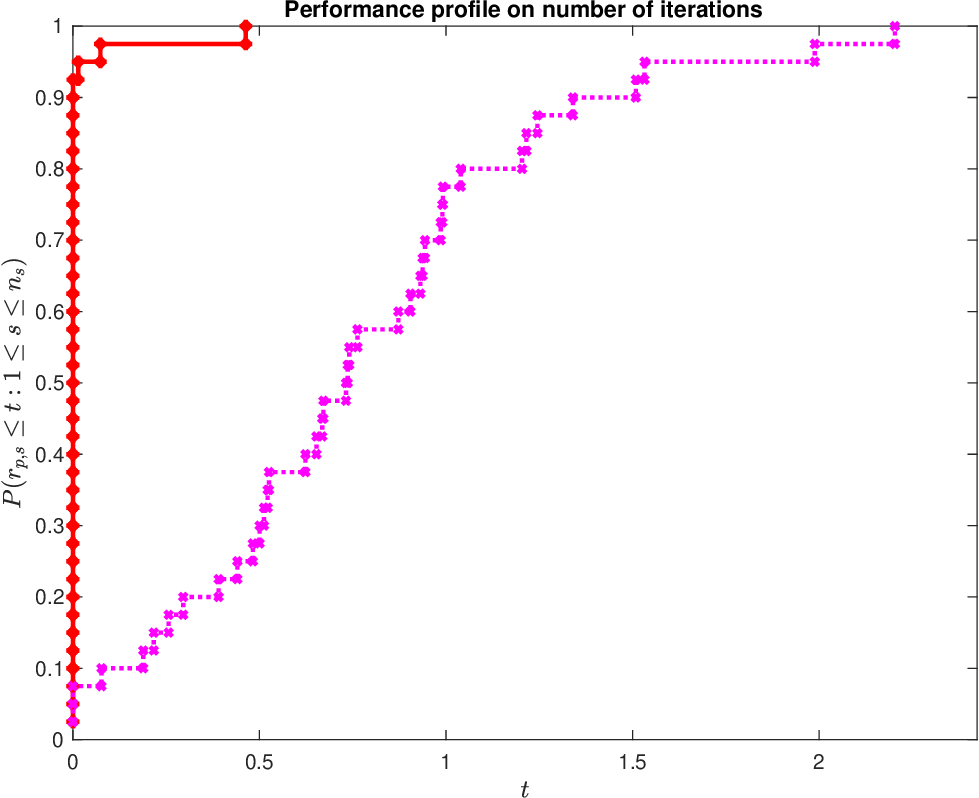}
  \includegraphics[width=0.46\linewidth]{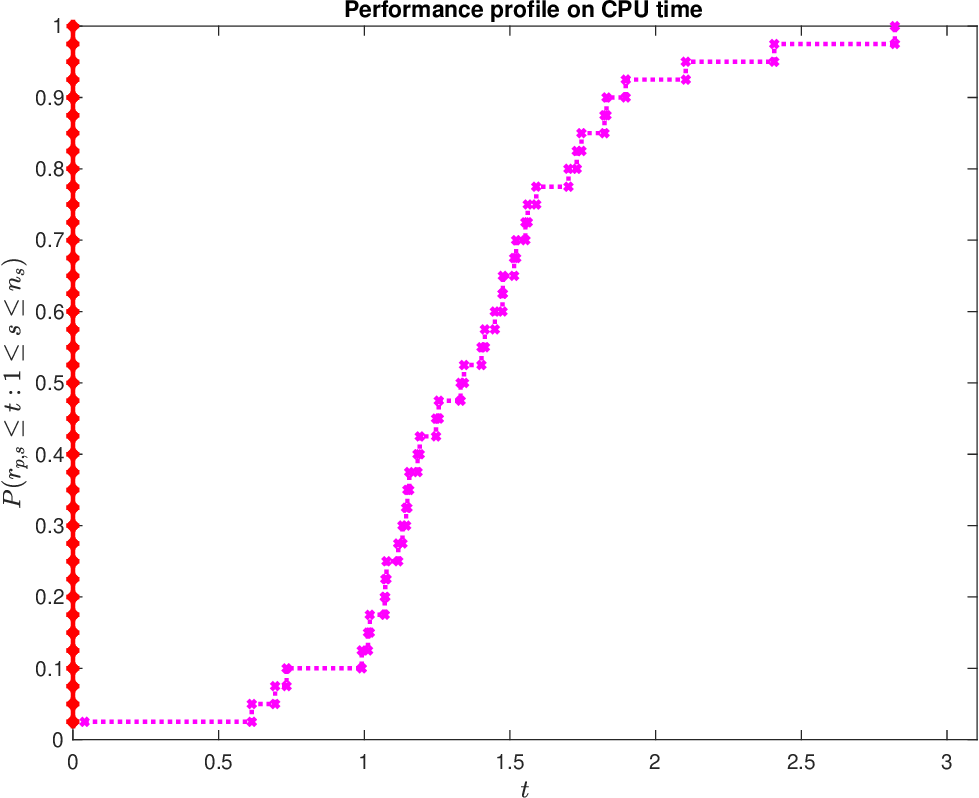}
  \includegraphics[width=0.46\linewidth]{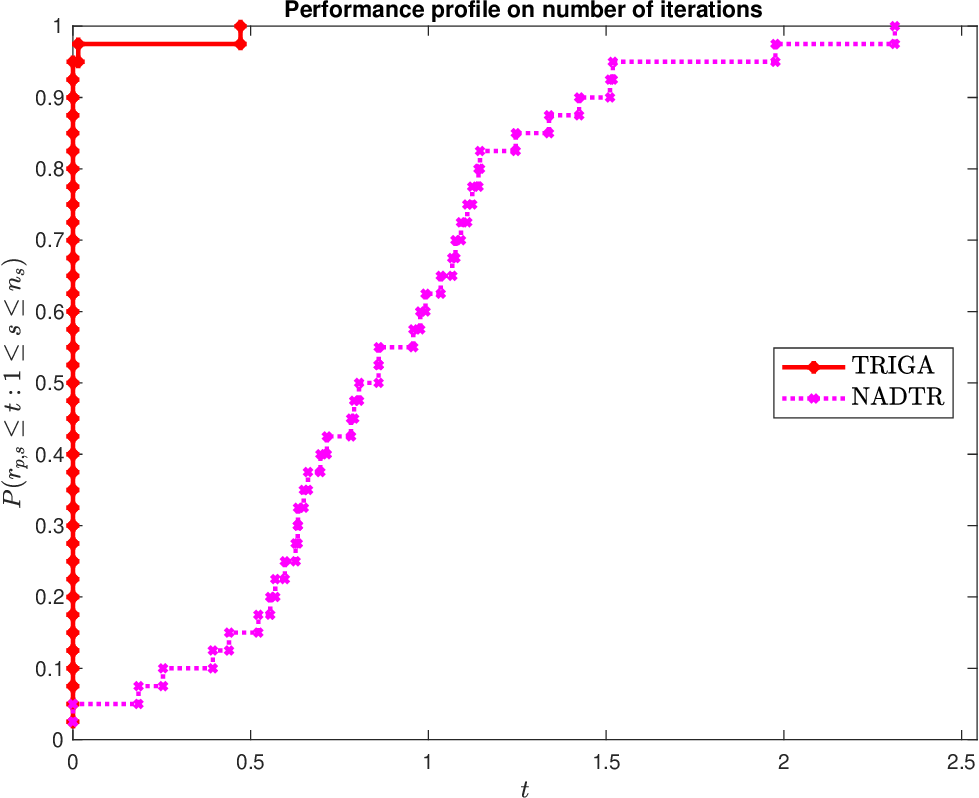}  
  \caption{Performance profiles of \ref{algo:triga} and \ref{algo:nadtr} with $p=1.95$ when $s = \frac{1}{1.1L}$ (first row) and $s = \frac{1}{2.1L}$ (second row) in terms of CPU time (in seconds) (left) and number of iterations (right) on synthetic datasets of linear least-squares problems}
  \label{fig:performance_lasso_synthetic}
  \end{figure}

From Figure~\ref{fig:best_p_triga}, we observe that Algorithm \ref{algo:triga} achieves a performance $\rho_{s}(t)$ exceeding $0.9$ even for relatively small values of $t$. It works very well for $p \geq 1.5$ and the best  $p = 1.95$. In contrast, for smaller values of $p$, the algorithm fails to successfully solve most problems, as indicated by $\rho_{s}(t) \leq 0.1$, in the limit of iterations. Once again, the larger the step-size is, the more the performance of \ref{algo:triga} is improved. This tendency is similarly observed  for Algorithm \ref{algo:nadtr}, as shown in Figure~\ref{fig:best_p_nadtr}. The results of test problems in Subsection~\ref{sec:test_problem_simple} and in this subsection demonstrate that $p=1.95$ is the good choice in Tikhonov regularization term $\varepsilon_k = \frac{1}{k^{p}}$ for both algorithms. We emphasize that this choice of $p$ is purely empirical and based on the present benchmark suite. In addition, Figure~\ref{fig:performance_lasso_benchmark} and Figure~\ref{fig:performance_lasso_synthetic} both reveal that \ref{algo:triga} consistently outperforms \ref{algo:nadtr}, as explained by its higher-lying performance curve for both types of datasets. The results of \ref{algo:triga} with $s = 1/(1.1L)$ are slightly better than (resp. as good as) those with $s = 1/(2.1L)$ on benchmark (resp. synthetic) datasets. As for CPU time (resp. number of iterations), \ref{algo:triga} solves more than $90\%$ of benchmark problems within the factor $t \geq 0.15$ (resp. $t \geq 0.15$) of the optimal possible result while \ref{algo:nadtr} is within $t \geq 1.48$ (resp. $t \geq 1.10$). For the synthetic datasets, our algorithm runs faster than \ref{algo:nadtr} on $39$ out of $40$ problems ($\rho_s = 97.5\%$) and requires fewer iterations on $37$ out of $40$ problems ($\rho_s = 92.5\%$). This can be explained in Remark~\ref{remark:particular_case_plessthan2}. The gap of performance $\rho_{s}$ between the two algorithms on the benchmark datasets in terms of CPU time (resp. number of iterations) varies from $21\%$ to $74\%$ (resp. from $5\%$ to $71\%$). This gap becomes significantly larger for the synthetic datasets, ranging from $10\%$ to $95\%$ (resp. from $73\%$ to $88\%$). 

\subsection{Problem 3: Logistic regression problem}

We finally address the logistic regression problem for binary classiﬁcation. Given a set of $m$ points $\{(a^{(i)}, y^{(i)})\}_{1 \leq i\leq m}$, the considered problem is to minimize the objective function $f : \mathbb{R}^n \to \mathbb{R}$ given by
\begin{align*}
  f(x) = \dfrac{1}{m} \sum_{i=1}^m \log(1+ e^{ - y^{(i)} \langle a^{(i)}, x \rangle}),
\end{align*}
where $a^{(i)} \in \mathbb{R}^n$ is the $n$-dimensional feature vector and $y^{(i)} \in \{-1,1\}$ is the binary label. Our experiment is conducted on a set of $12$ different real datasets drawn from LIBSVM Data\footnote{https://www.csie.ntu.edu.tw/~cjlin/libsvmtools/datasets/} where the number of samples $m$ (resp. the number of features $n$) varies from $62$ to $32561$ (resp. from $8$ to $2000$). Here, following the previous experiments, we set $\varepsilon_k = \frac{1}{k^{1.95}}$ as the best Tikhonov regularization parameter for both \ref{algo:triga} and \ref{algo:nadtr}. Figure \ref{fig:performance_logistic} displays the performance profiles of \ref{algo:triga} and \ref{algo:nadtr} with respect to CPU time and number of iterations. 

\begin{figure}[htbp]
\centering
\includegraphics[width=0.46\linewidth]{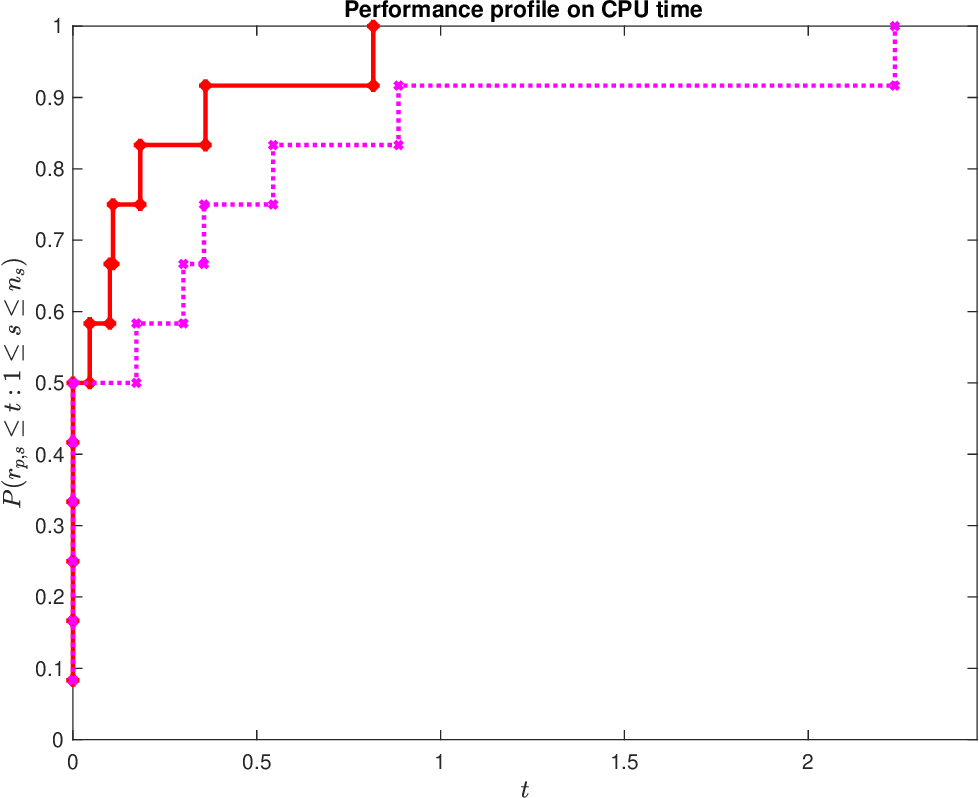}
\includegraphics[width=0.46\linewidth]{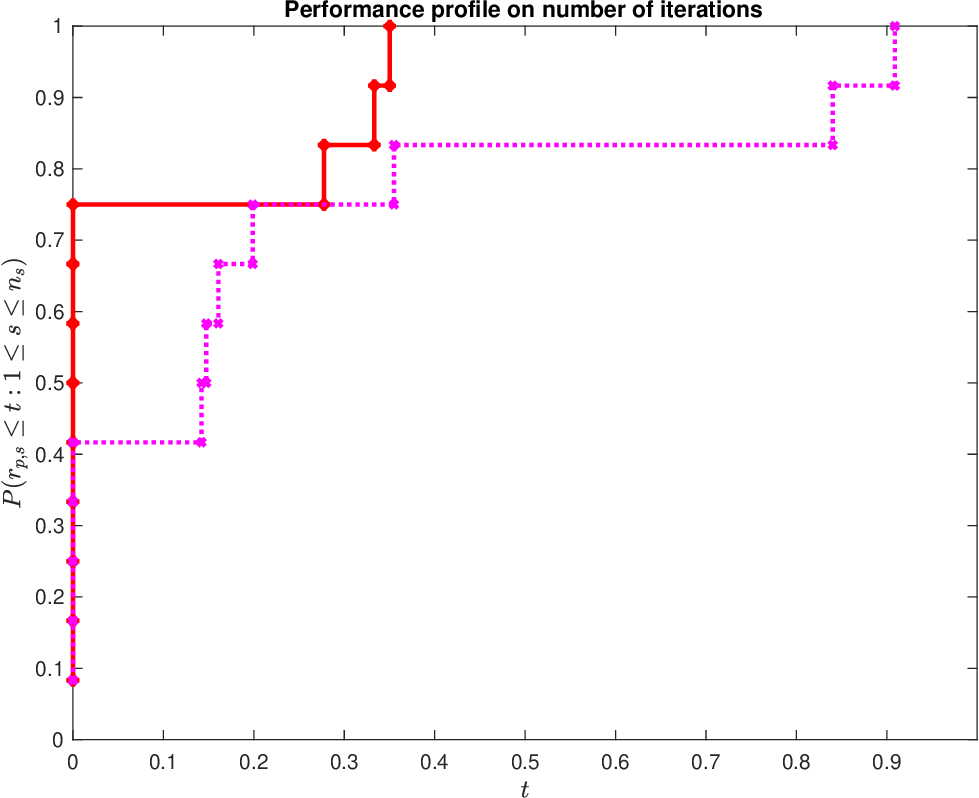}
\includegraphics[width=0.46\linewidth]{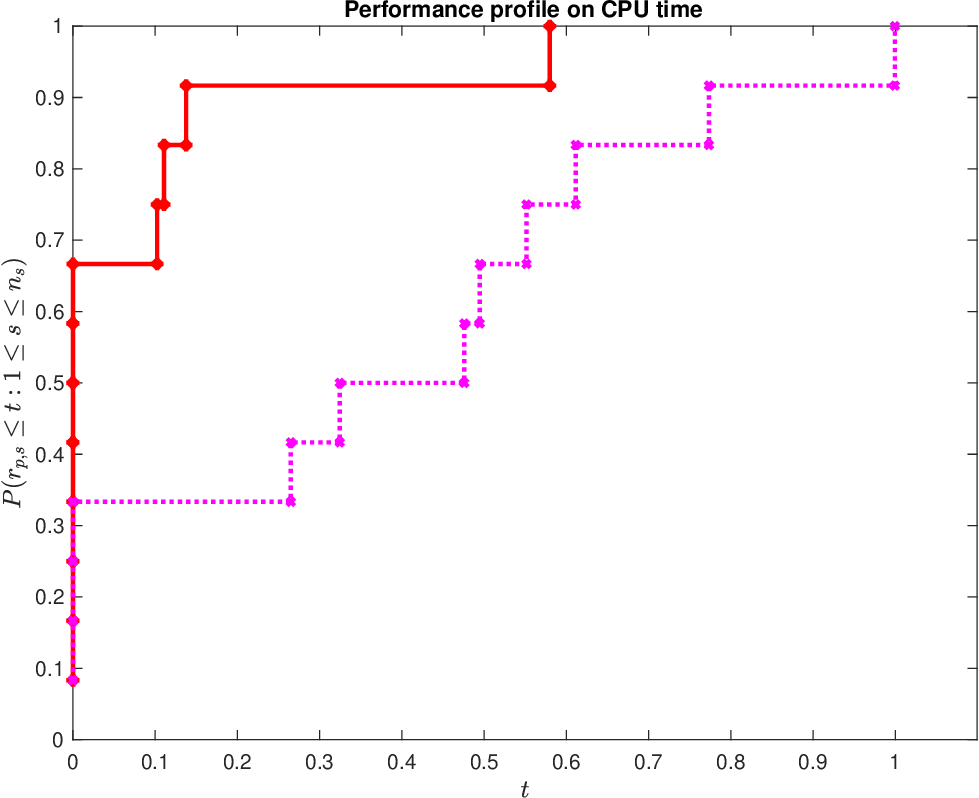}
\includegraphics[width=0.46\linewidth]{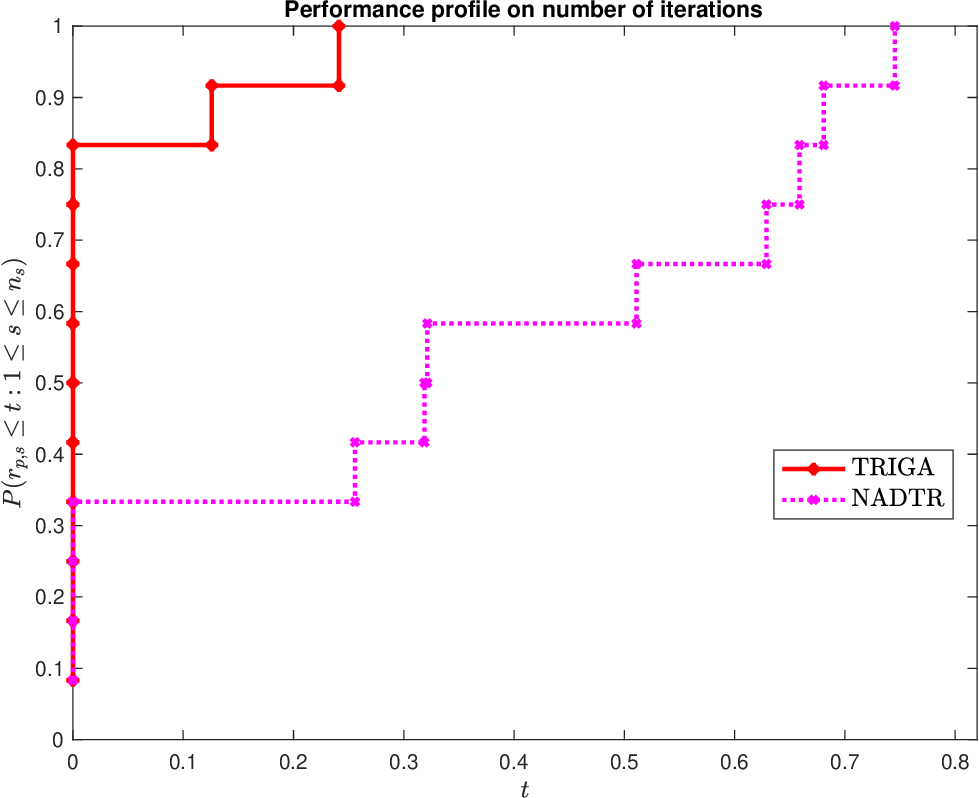}  
\caption{Performance profiles of \ref{algo:triga} and \ref{algo:nadtr} when $s = \frac{1}{1.1L}$ (first row) and $s = \frac{1}{2.1L}$ (second row) in terms of CPU time (in seconds) (left) and number of iterations (right) on real datasets of logistic regression problems}
\label{fig:performance_logistic}
\end{figure}

The performance curves in Figure~\ref{fig:performance_logistic} highlight the efficiency of \ref{algo:triga} in comparison with \ref{algo:nadtr}. Algorithm \ref{algo:triga} efficiently solves over all test problems, reaching at least the factor $t \geq 0.36$ of the best CPU time and $t \geq 0.33$ of the fewest iterations. In comparison, Algorithm \ref{algo:nadtr} achieves the same within $t \geq 0.89$ and $t \geq 0.84$, respectively. The performance gap of \ref{algo:triga} versus \ref{algo:nadtr} ranges from $8\%$ to $25\%$ (resp. from $8\%$ to $33\%$) in CPU time (resp. number of iterations). 

To sum up, the Tikhonov regularization parameter $\varepsilon_k = \frac{1}{k^{1.95}}$ and the step-size $s = \frac{1}{1.1L}$ (corresponding to the case $q > 1$) are the good choices for Algorithm~\ref{algo:triga} in these numerical experiments. Tikhonov regularization in our approach leads to the reduction in oscillations. The obtained results show that our algorithm runs faster and converges in fewer iterations, and performs more robustly than \ref{algo:nadtr} across all test problems.  

\section{Conclusion, perspective} \label{sec:conclusion}

We have introduced \ref{algo:triga}, a simple inertial gradient scheme with a Tikhonov regularization term, for solving convex optimization problems in Hilbert spaces. The method is obtained by time discretization of a second-order dynamical system with vanishing Tikhonov regularization. Our analysis establishes convergence rates for the objective values and the velocity sequence, together with strong convergence of the iterates. In particular, for $\varepsilon_k = 1/k^p$ with $0<p<2$, \ref{algo:triga} converges to the minimum-norm solution, while in the critical case $p=2$ it achieves fast rates on the objective values, comparable to those of Nesterov-type methods. Numerical experiments further indicate that \ref{algo:triga} performs very competitively on the considered benchmark, synthetic, and large-scale datasets, both in terms of iteration counts and practical running time.

Several directions deserve further investigation. First, beyond the prescribed decay laws studied here, it would be valuable to design adaptive strategies for choosing $(\varepsilon_k)_k$ (and possibly the inertial parameters), while preserving the selection mechanism and the theoretical guarantees. Second, motivated by large-scale learning, extending \ref{algo:triga} to stochastic or incremental variants (mini-batch gradients) raises nontrivial questions on the interplay between inertia, Tikhonov regularization, and noise. Third, it is natural to broaden the scope from smooth minimization to structured composite problems \eqref{prob:composite_fg}, by developing splitting counterparts of \ref{algo:triga} for objectives of the form ``smooth $+$ nonsmooth''. In parallel, following the Hessian-driven damping ideas of \cite{Attouch:amo:23}, one may also combine such damping mechanisms with Tikhonov regularization within a unified Lyapunov framework, with the aim of obtaining sharper guarantees and improved performance. Finally, it would be highly interesting to extend the present analysis beyond convexity, for instance to nonconvex settings under suitable regularity assumptions (e.g., Kurdyka-{\L}ojasiewicz type properties or prox-regularity), where the interaction between inertia and a vanishing Tikhonov term may still induce meaningful selection mechanisms. This direction is outside the scope of the present manuscript and will be addressed in future work.

\section*{Statements and Declarations}

The authors have no competing interests to declare that are relevant to the content of this article.

\begin{appendices}

\section{} \label{sec:appendix}

\begin{lemma} \label{lemma:sum-integral-inequality}
    Let $g \colon (0, +\infty) \to \mathbb{R}$ be a continuous and decreasing function. Then for all positive integer numbers $k_0$, $k$ such that $k_0 < k$, we have
    \begin{align*}
        \sum_{j=k_0}^{k-1} g(j+1) \leq \int_{k_0}^k g(t)dt \leq \sum_{j=k_0}^{k-1} g(j).
    \end{align*}
\end{lemma}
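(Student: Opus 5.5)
The idea is to compare the sum with the integral of $g$ over each unit interval. Since $g$ is continuous on $(0,+\infty)$, it is Riemann integrable on every compact interval $[j,j+1]\subset(0,+\infty)$, and
\[
\int_{k_0}^{k} g(t)\,dt=\sum_{j=k_0}^{k-1}\int_{j}^{j+1} g(t)\,dt
\]
by additivity of the integral. So it suffices to bound each piece $\int_j^{j+1} g(t)\,dt$ from above and below and then add.

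For the pointwise bounds, fix $j\in\{k_0,\dots,k-1\}$ and take $t\in[j,j+1]$. Because $g$ is decreasing (nonincreasing is enough), we have
\[
g(j+1)\le g(t)\le g(j).
\]
Integrating these inequalities over $[j,j+1]$, an interval of length $1$, and using monotonicity of the integral, we get
\[
g(j+1)\le \int_j^{j+1} g(t)\,dt\le g(j).
\]

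Summing this over $j=k_0,\dots,k-1$ and applying the additivity identity above gives
\[
\sum_{j=k_0}^{k-1} g(j+1)\le \int_{k_0}^{k} g(t)\,dt\le \sum_{j=k_0}^{k-1} g(j),
\]
which is the claim. There is no real obstacle here. The only point to check is that every interval $[j,j+1]$ lies in the domain $(0,+\infty)$, and this holds because $k_0\ge 1$.
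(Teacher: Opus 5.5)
Your proof is correct and follows the same route as the paper: bound each piece $\int_j^{j+1} g(t)\,dt$ between $g(j+1)$ and $g(j)$ using monotonicity, then sum over $j=k_0,\dots,k-1$. The only difference is cosmetic. The paper passes through the mean value theorem for integrals ($\int_j^{j+1} g(t)\,dt = g(c)$ for some $c$ in the interval), whereas you integrate the pointwise bounds directly, which is slightly cleaner and needs only integrability rather than continuity.
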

\begin{proof}
    By mean value theorem, for all $k_0 \leq j \leq k$, there exists $c \in ]j, j+1[$ such that 
    \begin{align*}
        \int_{j}^{j+1} g(t) dt = g(c).
    \end{align*}
    Since $g$ is continuous and decreasing on $[j, j+1]$, we have
    \begin{align*}
        g(j+1) \leq \int_{j}^{j+1} g(t) dt \leq g(j).
    \end{align*}
    Taking the sum for $j$ from $k_0$ to $k-1$, we obtain
    \begin{align*}
        \sum_{j=k_0}^{k-1} g(j+1) \leq \int_{k_0}^k g(t)dt \leq \sum_{j=k_0}^{k-1} g(j).
    \end{align*}
\end{proof}

\begin{theorem}[Stolz-Cesaro theorem] \cite[Theorem 1.22]{Muresan:springer:09} \label{theorem:stolz-cesaro}
    Let $(a_n)_{n\geq 1}$ and $(b_n)_{n \geq 1}$ be two sequences of real numbers. Suppose that the following conditions are satisfied
      \def\labelenumi{\rm (\roman{enumi})}
\def\theenumi{\roman{enumi}}
    \begin{enumerate} 
        \item $(b_n)_{n\geq 1}$ strictly increases to $+\infty$,

        \item the limit $\lim\limits_{n \to +\infty} \dfrac{a_{n+1} - a_n}{b_{n+1} - b_n} = l$ exists.
    \end{enumerate}
    Then we have $\lim\limits_{n \to + \infty} \dfrac{a_n}{b_n}= l$.
\end{theorem}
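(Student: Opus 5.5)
The statement to prove is the Stolz--Ces\`aro theorem, which is quoted from the literature. I would prove it directly with an $\epsilon$-argument. First take the case $l\in\mathbb{R}$. Fix $\epsilon>0$. By hypothesis (ii) there is $N$ such that for all $n\ge N$,
\[
(l-\epsilon)(b_{n+1}-b_n)\le a_{n+1}-a_n\le (l+\epsilon)(b_{n+1}-b_n).
\]
Here we use that $b_{n+1}-b_n>0$ by (i), so multiplying the two-sided bound on the ratio by it preserves the inequalities. Summing these telescoping inequalities from $N$ to $n-1$ gives
\[
(l-\epsilon)(b_n-b_N)\le a_n-a_N\le (l+\epsilon)(b_n-b_N)\qquad (n>N).
\]

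Next divide by $b_n$, which is positive for $n$ large since $b_n\to+\infty$:
\[
(l-\epsilon)\Bigl(1-\frac{b_N}{b_n}\Bigr)+\frac{a_N}{b_n}\le \frac{a_n}{b_n}\le (l+\epsilon)\Bigl(1-\frac{b_N}{b_n}\Bigr)+\frac{a_N}{b_n}.
\]
With $N$ fixed, the unboundedness in (i) gives $b_N/b_n\to0$ and $a_N/b_n\to0$. Hence
\[
l-\epsilon\le\liminf_{n\to+\infty}\frac{a_n}{b_n}\le\limsup_{n\to+\infty}\frac{a_n}{b_n}\le l+\epsilon.
\]
Letting $\epsilon\to0$ yields $\lim a_n/b_n=l$.

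If $l=+\infty$ is allowed, use the one-sided version instead. For any $M$, eventually $a_{n+1}-a_n\ge M(b_{n+1}-b_n)$. The same summation and division give $\liminf a_n/b_n\ge M$, so the limit is $+\infty$. The case $l=-\infty$ follows by symmetry, replacing $a_n$ with $-a_n$.

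The only delicate step is keeping track of signs: the positivity of the increments $b_{n+1}-b_n$ lets us multiply the ratio bounds, and the eventual positivity of $b_n$ lets us divide. Both come directly from hypothesis (i). Everything else is routine telescoping.
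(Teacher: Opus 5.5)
Your proof is correct. The paper gives no proof of its own and simply cites Muresan's textbook; your $\epsilon$-argument is the standard proof of this form of Stolz--Ces\`aro: multiply the ratio bounds by the positive increments $b_{n+1}-b_n$, telescope from $N$ to $n-1$, divide by $b_n$, and use $b_N/b_n\to 0$ and $a_N/b_n\to 0$. Note also that your argument only uses (i) and (ii) for $n\ge N$. That eventual version is exactly what the paper invokes in the proofs of Theorem~\ref{theorem:conv_rate_grad_algo} and Theorem~\ref{theorem:conv_rate_grad_algo_k^2}, where $b_k$ is shown to be strictly increasing only from some index $N$ on.
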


\begin{definition}[Big-O notation] \cite[Section 3.2]{Cormen:mit:22}
    Given two functions $f \colon \mathbb{R} \to \mathbb{R}$ and $g \colon \mathbb{R} \to \mathbb{R}^+$, we say that $f(x) = \mathcal{O}(g(x))$ if there exist a positive constant $c$ and a real number $x_0$ such that $|f(x)| \leq c g(x)$ for all $x \geq x_0$.
\end{definition}
   
\begin{lemma} \label{lemma:big-o}
     Given two functions $f \colon \mathbb{R} \to \mathbb{R}$ and $g \colon \mathbb{R} \to \mathbb{R}^+$, if $\lim\limits_{x \to + \infty} \dfrac{|f(x)|}{g(x)} < \infty$, then $f(x) = \mathcal{O}(g(x))$.
\end{lemma}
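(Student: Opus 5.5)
The plan is to argue directly from the definition of the Big-O notation recalled just above, using only the definition of the limit. Let $\ell := \lim_{x \to +\infty} |f(x)|/g(x)$. The hypothesis says this limit exists and is finite. Since $|f(x)|/g(x) \ge 0$ wherever it is defined, we also have $\ell \ge 0$. Implicitly $g(x) > 0$ for all $x$ large enough, since otherwise the quotient is not defined near $+\infty$. So I will first fix a real number $x_1$ with $g(x) > 0$ for all $x \ge x_1$.

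Next I apply the definition of the limit with tolerance $1$. There exists $x_0 \ge x_1$ such that
\[
\left| \frac{|f(x)|}{g(x)} - \ell \right| \le 1 \quad \text{for all } x \ge x_0 .
\]
In particular $|f(x)|/g(x) \le \ell + 1$ for all $x \ge x_0$. Multiplying by $g(x) > 0$ gives
\[
|f(x)| \le (\ell + 1)\, g(x) \quad \text{for all } x \ge x_0 .
\]
Taking $c := \ell + 1 > 0$, this is exactly the condition in the definition of $f(x) = \mathcal{O}(g(x))$.

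There is no substantive obstacle. The only point needing care is that the constant in the definition must be strictly positive, even though $\ell$ may equal $0$. Using $\ell + 1$ rather than $\ell$ handles this case automatically. The same argument applies verbatim to sequences indexed by $k \in \mathbb{N}$, which is how the lemma is used in the proofs of Theorems~\ref{theorem:conv_rate_grad_algo} and~\ref{theorem:conv_rate_grad_algo_k^2}. There, $|f|/g$ is the ratio $a_k/b_k$, whose limit is supplied by Theorem~\ref{theorem:stolz-cesaro}.
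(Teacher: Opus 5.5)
Your proof is correct and follows essentially the same route as the paper's: take the limit $\ell$, use the definition of the limit to get $|f(x)|/g(x) \le \ell + \epsilon$ for all large $x$, and multiply through by $g(x)$. Your choice $\epsilon = 1$, together with your remarks that $g>0$ eventually and that $c=\ell+1>0$ even when $\ell=0$, only makes the same argument more careful.
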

\begin{proof}
    Let $l \in \mathbb{R}$ be the limit $\lim\limits_{x \to +\infty} \dfrac{|f(x)|}{g(x)}$. There exist $\epsilon > 0$ and $x_0 \in \mathbb{R}$ such that
    \begin{align*}
        \dfrac{|f(x)|}{g(x)} \leq l + \epsilon, ~ \forall x \geq x_0.
    \end{align*}
    Therefore, $|f(x)| \leq (l + \epsilon) g(x)$ for all $x \geq x_0$ which implies that $f(x) = \mathcal{O}(g(x))$.
\end{proof}

\begin{lemma}[Descent lemma] \cite[Lemma 4.22]{Beck:siam:17} \label{lemma:descent-lemma}Let $g \colon \mathcal{H} \to \mathbb{R}$ be a differentiable convex function whose gradient $\nabla g$ is $L$-Lipschitz continuous. For all $x,y \in \mathcal{H}$, we have
\begin{align*}
    g(y) \leq g(x) + \langle \nabla g(x), y - x \rangle + \dfrac{L}{2} \| x - y \|^2.
\end{align*}    
\end{lemma}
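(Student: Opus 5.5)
The statement at the end is Lemma~\ref{lemma:descent-lemma}, the classical descent lemma. The plan is the standard integral argument via the fundamental theorem of calculus along the segment joining $x$ and $y$. Note that convexity is not needed; only $L$-Lipschitz continuity of $\nabla g$ is used.

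Fix $x,y\in\mathcal{H}$ and set $\phi(t):=g(x+t(y-x))$ for $t\in[0,1]$. Since $g$ is differentiable, $\phi$ is differentiable with $\phi'(t)=\langle \nabla g(x+t(y-x)), y-x\rangle$. This derivative is continuous in $t$, because $\nabla g$ is Lipschitz, hence continuous. Therefore
\[
g(y)-g(x)=\int_0^1 \langle \nabla g(x+t(y-x)), y-x\rangle\,dt .
\]

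Subtract $\langle \nabla g(x), y-x\rangle=\int_0^1\langle \nabla g(x),y-x\rangle\,dt$ from both sides:
\[
g(y)-g(x)-\langle \nabla g(x), y-x\rangle=\int_0^1 \langle \nabla g(x+t(y-x))-\nabla g(x), y-x\rangle\,dt .
\]
Apply Cauchy--Schwarz to the integrand, then the Lipschitz bound $\|\nabla g(x+t(y-x))-\nabla g(x)\|\le Lt\|y-x\|$. This bounds the integrand by $Lt\|y-x\|^2$. Integrating gives $\int_0^1 Lt\,dt=\frac{L}{2}$, which yields the claimed inequality.

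The only point needing care is justifying the fundamental theorem of calculus for $\phi$. This is the main (mild) obstacle, and it is settled by the continuity of $\phi'$ noted above, so the Riemann integral formula applies. Everything else is a direct estimate.
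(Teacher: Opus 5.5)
Your proof is correct, and it is the standard argument. The paper gives no proof of its own; it cites Beck, where the descent lemma is proved exactly as you do, by writing $g(y)-g(x)=\int_0^1\langle \nabla g(x+t(y-x)),y-x\rangle\,dt$ and bounding the deviation from $\langle\nabla g(x),y-x\rangle$ via Cauchy--Schwarz and the Lipschitz estimate. You are also right that convexity is not needed, and that continuity of $\phi'$ is what justifies the fundamental theorem of calculus.
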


\begin{lemma}[Extended descent lemma] \label{lemma:extended-descent-lemma}
    Let $g \colon \mathcal{H} \to \mathbb{R}$ be a $\mu$-strongly convex, continuously differentiable function whose gradient $\nabla g$ is $L$-Lipschitz continuous. For all $x, y \in \mathcal{H}$ and all $s>0$, we have the following inequality
    \begin{equation} \label{extended-descent-lemma-inequality}
        g(y - s\nabla g(y)) \leq g(x) + \langle \nabla g(y), y - x \rangle + \left( \dfrac{Ls^2}{2} - s \right) \|\nabla g(y)\|^2 - \dfrac{\mu}{2} \|x - y\|^2.
    \end{equation}
\end{lemma}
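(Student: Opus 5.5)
The plan is to combine two one-point inequalities, both anchored at the point $y$: an upper bound for $g$ at $z:=y-s\nabla g(y)$ from the descent lemma, and an upper bound for $g(y)$ from strong convexity. Adding them gives \eqref{extended-descent-lemma-inequality} directly, with no case distinction on $s$. The sign of $\frac{Ls^2}{2}-s$ does not matter, since it only appears as a coefficient.

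\emph{Step 1 (descent at $y$).} Apply Lemma~\ref{lemma:descent-lemma} to $g$, which is convex with $L$-Lipschitz gradient, taking its base point to be $y$ and its second point to be $z=y-s\nabla g(y)$. Since $z-y=-s\nabla g(y)$, this gives
\begin{equation*}
g(y-s\nabla g(y)) \le g(y) - s\|\nabla g(y)\|^2 + \frac{Ls^2}{2}\|\nabla g(y)\|^2 = g(y)+\Big(\frac{Ls^2}{2}-s\Big)\|\nabla g(y)\|^2 .
\end{equation*}

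\emph{Step 2 (strong convexity, linearized at $y$).} By $\mu$-strong convexity and differentiability, for every $x\in\mathcal{H}$,
\begin{equation*}
g(x)\ge g(y)+\langle \nabla g(y),x-y\rangle+\frac{\mu}{2}\|x-y\|^2 .
\end{equation*}
Equivalently,
\begin{equation*}
g(y)\le g(x)+\langle \nabla g(y),y-x\rangle-\frac{\mu}{2}\|x-y\|^2 .
\end{equation*}

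\emph{Step 3 (combine).} Substitute the bound on $g(y)$ from Step~2 into the right-hand side of Step~1. This yields exactly \eqref{extended-descent-lemma-inequality} for all $x,y\in\mathcal{H}$ and all $s>0$.

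There is no real obstacle here. The only point to watch is applying the descent lemma in the right direction, with base point $y$ where the gradient is evaluated and not $x$, so that the gradient appearing is $\nabla g(y)$. This matches both the inner product term and the gradient-norm term in the target inequality.
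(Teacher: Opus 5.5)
Your proof is correct and matches the paper's argument: you apply the descent lemma with base point $y$ at the point $y-s\nabla g(y)$, bound $g(y)$ by the strong convexity inequality linearized at $y$, and add the two estimates.
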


\begin{proof}
    By Lemma \ref{lemma:descent-lemma}, we have for all $x, y \in \mathcal{H}$ and $s>0$
    \begin{align*}
        g(y - s\nabla g(y)) &\leq g(y) + \langle \nabla g(y), -s\nabla g(y) \rangle + \dfrac{L}{2} \| s\nabla g(y)\|^2 \\
        & = g(y) + \left( \dfrac{Ls^2}{2} - s \right) \|\nabla g(y)\|^2.
    \end{align*}
    By the strong convexity of $g$, we have
    \begin{align*}
        g(y) \leq g(x) + \langle \nabla g(y), y -x \rangle - \dfrac{\mu}{2}\|x-y\|^2.
    \end{align*}
    From the above inequalities, we obtain \eqref{extended-descent-lemma-inequality}.
\end{proof}

\end{appendices}

\end{document}